\documentclass[11pt]{article}
\usepackage{amsmath, amssymb, amsfonts, amsthm, authblk, color, graphicx, enumitem, mathrsfs, indentfirst}
\usepackage{hyperref, cleveref}
\hypersetup{colorlinks=true, linkcolor=blue, filecolor=magenta, urlcolor=cyan}
\usepackage[textwidth=6in,textheight=9in]{geometry}
\allowdisplaybreaks

\usepackage{algorithm}
\usepackage{algorithmic}
\usepackage{subcaption}
\usepackage{chngcntr}
\counterwithin{table}{section}
\newtheorem{theorem}{Theorem}[section]

\newtheorem{example}{Example}[section]
\newtheorem{assumption}{Assumption}
\newtheorem{remark}{Remark}[section]

\numberwithin{equation}{section}

\newcommand{\keywords}[1]{\small\textbf{\textit{Keywords---}}#1}

\title{A randomized progressive iterative regularization method for data fitting problems}

\author{Dakang Cen}
\author{Wenlong Zhang\footnote{Corresponding author: zhangwl@sustech.edu.cn, supported by the National Natural Science Foundation of China under grant
numbers No.12371423 and No.12241104.}}
\author{Junbin Zhong}
\affil{Department of Mathematics, Southern University of Science and Technology, Shenzhen, 518055, China}

\begin{document}
\maketitle

\abstract{In this work, we investigate data fitting problems with random noises. A randomized progressive iterative regularization method is proposed. It works well for large-scale matrix computations and converges in expectation to the least-squares solution. Furthermore, we present an optimal estimation for the regularization parameter, which inspires the construction of self-consistent algorithms without prior information.  The numerical results confirm the theoretical analysis and show the performance in curve and surface fittings.
}

\keywords{data fitting, least-squares, randomized iterative algorithm, regularization method, stochastic error estimates}

\section{Introduction}
Data fitting problems are among the classical topics in computational mathematics, with a wide range of applications in science and engineering, including computer-aided geometric design (CAGD), differential equations, approximation theory, and computer graphics\cite{LIN201840}.

Among various fitting approaches, the Progressive Iterative Approximation (PIA) method stands out as an intuitive and well-established technique. It was initially introduced by Qi et al. in 1975\cite{Qi1975} and de Boor in 1979\cite{deboor1979agee}, and then further studied by Lin et al. in 2005\cite{LIN2005575}. PIA method avoids solving large-scale linear systems, thereby reducing computational cost, and is capable of producing a sequence of approximating curves or surfaces. However, it requires the number of control points to be equal to the number of data points, which becomes impractical when the dataset is large. Several variants of PIA have been proposed to address different aspects of the method, such as the local PIA\cite{LIN2010322}, and weighted PIA\cite{LU2010129,ZHANG2018331}, all of which still require the control points to match the number of data points. A major breakthrough was made by Lin and Zhang in 2011\cite{LIN2011967}, who introduced the Extended PIA (EPIA), allowing the number of control points to be fewer than the number of data points. Additionally, EPIA is well-suited for parallel execution due to its computational independence. Lin and Deng\cite{DENG201432} later proposed a least-squares PIA (LSPIA) based on B-spline basis functions, which expanded the applicability of PIA in shape modeling. Furthermore, Rios and Jüttler\cite{RIOS2022113921} proved algebraically that LSPIA is equivalent to the gradient descent method. Many other PIA variants have also been proposed—see Ref\cite{EBRAHIMI20191,HUANG2020101931,LIU2020112389,CARNICER20102010,6035703,WU2025102439} for details, such as SLSPIA and MLSPIA. Both of them are global methods and need to update all control points in each iteration. Although local PIA method updates only part of the control points, it cannot handle least-squares fitting problems. In the case of surface fitting, all of these methods require the computation of the Kronecker product of two collocation matrices, which is computationally demanding. To address these limitations, Liu and Wu\cite{WU2024128669} proposed a novel Randomized Progressive Iterative Approximation (RPIA) method in 2024. This method updates a subset of control points based on randomized criteria to construct a sequence of fitting curves or surfaces and was theoretically shown to yield a least-squares fit to the given data points.
At the same time, in the existing methods for surface fitting, PDE-based deformation surfaces, bilinearly blended Coons patches, B-spline and tensor product smoothing methods have been extensively used for surface fitting and geometric modeling in three dimensions\cite{ZHU2024113436}. While these approaches are highly effective for representing complex geometric structures and achieving high-accuracy reconstruction, they typically rely on large-scale matrix computations and may be sensitive to noise or require careful tuning of prior parameters.

This paper is based on the RPIA method proposed by Liu and Wu\cite{WU2024128669} in 2024, with a focus on enhancing its robustness under noisy data conditions. Although Liu and Wu explained the noise tolerance of RPIA method through numerical experiments, they did not provide a theoretical justification. Considering that regularization is a widely recognized and effective approach for handling noise, it has become a fundamental tool in data fitting to address ill-posedness and improve solution stability\cite{MONTEGRANARIO2007583}. When data are affected by measurement errors, missing values, or perturbations, directly minimizing the fitting error often leads to overfitting or unstable solutions. Regularization addresses these issues by incorporating prior knowledge or additional constraints—such as smoothness, sparsity, or norm bounds—into the optimization process, thereby improving the conditioning of the problem and enhancing the robustness and interpretability of the solution. Consequently, regularization theory constitutes a crucial part of modern data fitting methodologies and plays a key role in reconstructing curves and surfaces from noisy data. Existing studies have demonstrated the effectiveness of various regularization techniques, including Tikhonov regularization\cite{Calvetti2025DistributedTR}, variational regularization\cite{Buccini2021AVN,Zhang2022ImpulseNI} and kernel-based methods\cite{Yin2020CurveFO}, in a wide range of fitting applications. So in summary, we propose the Randomized Progressive Iterative Regularization (RPIA) Method. The main contributions of this work are as follows:

\begin{enumerate}
\item Under a certain level of noise, we theoretically prove that the proposed randomized iterative regularization method converges to the least-squares solution under the appropriate conditions.
\item Given the level of noise and the size of the data, we provide a theoretical estimate of the optimal regularization parameter. 
\item Self-consistent algorithms without prior information are presented, which are significant in practical applications.
\end{enumerate}

We organize the remainder of this paper as follows. In Section 2, we briefly review the original RPIA method for curve and surface fitting. In Section 3, we introduce the  regularized RPIA Method for noisy curve and surface fitting. Section 4 presents a theoretical analysis of the convergence of the proposed method. In Section 5, we provide a stochastic estimate of the optimal regularization parameter. Furthermore, self-consistent algorithms without prior information are proposed. Subsequently, in Section 7, we validate the theoretical findings through several numerical examples. Finally, Section 8 concludes the paper with a summary of our main results.

\section{The RPIA method}
In this section, we introduce the original RPIA method for curve and surface fitting\cite{WU2024128669}.

\subsection{The case of curves}
We call $\left\{U_{i}\right\}_{i=0}^{t}$ a partition of $\left[n_{1}\right]$ if $U_{i} \cap U_{j}=\emptyset$ for $i \neq j$ and $\cup_{i=0}^{t} U_{i}=\left[n_{1}\right]$. Given the data point $\left\{\boldsymbol{q}_{j}\right\}_{j=0}^{m}$, the initial control point $\left\{\boldsymbol{p}_{i}^{(0)}\right\}_{i=0}^{n_{1}}$ and a blending basis sequence $\left\{\mu_i(x): i \in\left[n_1\right]\right\}$ defined on $[0,1]$, we first construct an initial curve.
$$
\mathcal{C}^{(0)}(x)=\sum_{i=0}^{n_{1}} \mu_{i}(x) \boldsymbol{p}_{i}^{(0)},~x \in\left[x_{0}, x_{m}\right].
$$
Correspondingly, the $k$-th curve is:
\begin{equation*}
\mathcal{C}^{(k)}(x)=\sum_{i=0}^{n_{1}} \mu_{i}(x) \boldsymbol{p}_{i}^{(k)},~x \in\left[x_{0}, x_{m}\right].
\end{equation*}
for $k=0,1,2, \cdots$ and the $j$ th difference
\begin{equation*}
\boldsymbol{r}_{j}^{(k)}=\boldsymbol{q}_{j}-\mathcal{C}^{(k)}\left(x_{j}\right),
\end{equation*}
for $j \in[m]$. Let the index $t_{k} \in[t]$ be selected with probability.
\begin{equation*}
\mathbb{P}\left(\text { Index }=t_{k}\right)=\frac{\sum_{i_{k} \in U_{t_{k}}} \sum_{j=0}^{m} \mu_{i_{k}}^{2}\left(x_{j}\right)}{\sum_{i=0}^{n_{1}} \sum_{j=0}^{m} \mu_{i}^{2}\left(x_{j}\right)}. 
\end{equation*}
If $i_{k} \in U_{t_{k}} \subseteq\left[n_{1}\right]$, the $i_{k}$ th adjusting vector is computed by
\begin{equation*}
\boldsymbol{\delta}_{i_{k}}^{(k)}=\frac{\sum_{j=0}^{m} \mu_{i_{k}}\left(x_{j}\right) r_{j}^{(k)}}{\sum_{i_{k} \in U_{t_{k}}} \sum_{j=0}^{m} \mu_{i_{k}}^{2}\left(x_{j}\right)},
\end{equation*}
otherwise it is zero. Then we update the control points in a block form according to
\begin{equation*}
p_{U_{t_{k}}}^{(k+1)}=p_{U_{t_{k}}}^{(k)}+\boldsymbol\delta_{U_{t_{k}}}^{(k)},
\end{equation*}
while the other control points keep fixed. After that, the next curve is generated by

$$
\mathcal{C}^{(k+1)}(x)=\sum_{i=0}^{n_{1}} \mu_{i}(x) \boldsymbol{p}_{i}^{(k+1)}=\mathcal{C}^{(k)}(x)+\sum_{i_{k} \in U_{t_{k}}} \mu_{i_{k}}(x) \boldsymbol{\delta}_{i_{k}}^{(k)} .
$$
\subsection{The case of surfaces}
Now we present the original RPIA method for surface fitting. Let $\left\{V_{j}\right\}_{j=0}^{s}$ denote another partition of $\left[n_{1}\right]$. Given the data points $\left\{\boldsymbol{Q}_{h l}\right\}_{h, l=0}^{m, p}$, the initial control points $\left\{\boldsymbol{P}_{i j}^{(0)}\right\}_{i, j=0}^{n_{1}, n_{2}}$, we first construct an initial surface

$$
\mathcal{S}^{(0)}(x, y)=\sum_{i=0}^{n_{1}} \sum_{j=0}^{n_{2}} \mu_{i}(x) v_{j}(y) \boldsymbol{P}_{i j}^{(0)}, ~x \in\left[x_{0}, x_{m}\right], ~y \in\left[y_{0}, y_{p}\right],
$$
where $\nu_j(y)$ is another blending basis sequence defined on $[0,1]$. Correspondingly, we can obtain the $k$-th surface:
$$
S^{(k)}(x, y)=\sum_{i=0}^{n_{1}} \sum_{j=0}^{n_{2}} \mu_{i}(x) v_{j}(y) \boldsymbol{P}_{i j}^{(k)},
$$
and the $(h, l)$ th difference
\begin{equation*}
\boldsymbol{R}_{h l}^{(k)}=\boldsymbol{Q}_{h l}-\boldsymbol{S}^{(k)}\left(x_{h}, y_{l}\right),
\end{equation*}
for $h \in[m]$ and $l \in[p]$, we randomly choose $t_{k} \in[t]$ and $s_{k} \in[s]$ with probabilities
\begin{equation*}
\mathbb{P}\left(\text { Index }=t_{k}\right)=\frac{\sum_{i_{k} \in U_{t_{k}}} \sum_{h=0}^{m} \mu_{i_{k}}^{2}\left(x_{h}\right)}{\sum_{i=0}^{n_{1}} \sum_{h=0}^{m} \mu_{i}^{2}\left(x_{h}\right)} \text { and } \mathbb{P}\left(\text { Index }=s_{k}\right)=\frac{\sum_{j_{k} \in V_{s_{k}}} \sum_{l=0}^{p} v_{j_{k}}^{2}\left(y_{l}\right)}{\sum_{j=0}^{n_{2}} \sum_{l=0}^{p} v_{j}^{2}\left(y_{l}\right)}, 
\end{equation*}
respectively, and compute the $\left(i_{k}, j_{k}\right)$ th adjusting vector based on
\begin{equation*}
\boldsymbol{\Delta}_{i_{k}, j_{k}}^{(k)}=\frac{\sum_{h=0}^{m} \sum_{l=0}^{p} \mu_{i_{k}}\left(x_{h}\right) v_{j_{k}}\left(y_{l}\right) \boldsymbol{R}_{h l}^{(k)}}{\left(\sum_{i_{k} \in U_{t_{k}}} \sum_{h=0}^{m} \mu_{i_{k}}^{2}\left(x_{h}\right)\right)\left(\sum_{j_{k} \in V_{s_{k}}} \sum_{l=0}^{p} v_{j_{k}}^{2}\left(y_{l}\right)\right)},
\end{equation*}
if $\left(i_{k}, j_{k}\right) \in\left(U_{t_{k}}, V_{s_{k}}\right)$ with $U_{t_{k}} \subseteq\left[n_{1}\right]$ and $V_{s_{k}} \subseteq\left[n_{2}\right]$, otherwise it is zero. Then, the control points in a block form are updated by
\begin{equation*}
\boldsymbol{P}_{U_{t_{k}}, V_{s_{k}}}^{(k+1)}=\boldsymbol{P}_{U_{t_{k}}, V_{s_{k}}}^{(k)}+\boldsymbol{\Delta}_{U_{t_{k}}, V_{s_{k}}}^{(k)}, 
\end{equation*}
where the other control points retain fixed. With the above preparation, the next surface is generated by
$$
{S}^{(k+1)}(x, y)=\sum_{i=0}^{n_{1}} \sum_{j=0}^{n_{2}} \mu_{i}(x) v_{j}(y) \boldsymbol{P}_{i j}^{(k+1)}=S^{(k)}(x, y)+\sum_{i_{k} \in U_{t_{k}}} \sum_{j_{k} \in V_{s_{k}}} \mu_{i_{k}}(x) v_{j_{k}}(y) \boldsymbol\Delta_{i_{k}, j_{k}}^{(k)}.
$$
\section{The regularized RPIA method}
In this section, we propose a regularized RPIA method for curve and surface fitting based on the RPIA method described above.

\subsection{The regularized case of curves}
Let $\boldsymbol{A} \in \mathbb{R}^{(m+1) \times\left(n_1+1\right)}$ be the B-spline design matrix, $\boldsymbol{q} \in \mathbb{R}^{m+1}$ the data points, and $\Gamma$ is a second-order difference operator. The Tikhonov regularized problem is formulated as:

\begin{equation*}
\min _{\boldsymbol{p}}\|\boldsymbol{A} \boldsymbol{p}-\boldsymbol{q}\|_F^2+\lambda\|\Gamma \boldsymbol{p}\|_F^2, 
\end{equation*}
which is equivalent to the augmented least-squares problem:
$$
\min _p\|\hat{\boldsymbol{A}} \boldsymbol{p}-\hat{\boldsymbol{q}}\|_F^2,
$$
where the augmented matrix and target vector are defined as:
$$
\hat{\boldsymbol{A}}=\left[\begin{array}{c}
\boldsymbol{A} \\
\sqrt{\lambda} \Gamma
\end{array}\right], \quad \hat{\boldsymbol{q}}=\left[\begin{array}{l}
\boldsymbol{q} \\
\mathbf{0}
\end{array}\right].
$$

Next, let $\left\{U_i\right\}_{i=0}^t$ be a partition of $\left[n_1\right]$. Given the data point$\left\{\boldsymbol{q}_{j}\right\}_{j=0}^{m}$, the initial control point$\left\{\boldsymbol{p}_{i}^{(0)}\right\}_{i=0}^{n_{1}}$ and a blending basis sequence $\left\{\mu_i(x): i \in\left[n_1\right]\right\}$ defined on $[0,1]$, we first construct an initial curve.

$$
\mathcal{C}^{(0)}(x)=\sum_{i=0}^{n_{1}} \mu_{i}(x) \boldsymbol{p}_{i}^{(0)}, ~x \in\left[x_{0}, x_{m}\right],
$$
and compute the difference according to
$$
\boldsymbol{r}^{(0)}=\hat{\boldsymbol{q}}-\hat{\boldsymbol{A}} \boldsymbol{p}^{(0)} .
$$
Let the index $t_{0} \in[t]$ be selected with probability
$$
\mathbb{P}\left(\text { Index }=t_0\right)=\frac{\left\|\hat{\boldsymbol{A}}_{:, U_{t_0}}\right\|_F^2}{\sum_{i=0}^t\left\|\hat{\boldsymbol{A}}_{:, U_{i}}\right\|_F^2},
$$
where $\hat{\boldsymbol{A}}_{U_{t_k}}$ denote the columns of $\hat{\boldsymbol{A}}$ indexed by $U_{t_k}$. The adjusting vectors in keeping with
$$
\tilde{\mu}_0=\frac{1}{\left\|\hat{\boldsymbol{A}}_{:, U_{t_0}}\right\|_F^2}, \quad \boldsymbol{\tilde\delta}_{i_0}^{(0)}=\tilde{\mu}_0 \cdot \hat{\boldsymbol{A}}_{:, U_{t_0}}^T \boldsymbol{r}^{(0)}.
$$
Then we update the partial control points in a block form as

$$
\boldsymbol{p}_{U_{t_{0}}}^{(1)}=\boldsymbol{p}_{U_{t_{0}}}^{(0)}+\boldsymbol{\tilde\delta}_{U_{t_{0}}}^{(0)},
$$
in which the rest of control points remain unchanged and $\boldsymbol{x}_{U}$ denotes the subvector of $\boldsymbol{x}$ indexed by $U$ for any vector $\boldsymbol{x}$.

Recursively, assuming that we have obtained the $k$ th curve

\begin{equation}\label{3.1}
\mathcal{C}^{(k)}(x)=\sum_{i=0}^{n_{1}} \mu_{i}(x) \boldsymbol{p}_{i}^{(k)}, ~x \in\left[x_{0}, x_{m}\right],
\end{equation}
for $k=0,1,2, \cdots$ and the difference
\begin{equation}\label{3.2}
\boldsymbol{r}^{(k)}=\hat{\boldsymbol{q}}-\hat{\boldsymbol{A}} \boldsymbol{p}^{(k)}.
\end{equation}
Let the index $t_{k} \in[t]$ be selected with probability
\begin{equation}\label{3.3}
\mathbb{P}\left(\text { Index }=t_k\right)=\frac{\left\|\hat{\boldsymbol{A}}_{:, U_{t_k}}\right\|_F^2}{\sum_{i=0}^t\left\|\hat{\boldsymbol{A}}_{:, U_{i}}\right\|_F^2}.
\end{equation}
The adjusting vectors are computed by
\begin{equation}\label{3.4}
\tilde{\mu}_k=\frac{1}{\left\|\hat{\boldsymbol{A}}_{:, U_{t_k}}\right\|_F^2}, \quad \boldsymbol{\tilde\delta_{i_k}}^{(k)}=\tilde{\mu}_k \cdot \hat{\boldsymbol{A}}_{:, U_{t_k}}^T \boldsymbol{r}^{(k)}. 
\end{equation}
Then we update the control points in a block form according to
\begin{equation}\label{3.5}
\boldsymbol{p}_{U_{t_{k}}}^{(k+1)}=\boldsymbol{p}_{U_{t_{k}}}^{(k)}+\boldsymbol{\tilde\delta}_{U_{t_{k}}}^{(k)},
\end{equation}
while the other control points keep fixed. After that, the next curve is generated by
$$
\mathcal{C}^{(k+1)}(x)=\sum_{i=0}^{n_{1}} \mu_{i}(x) \boldsymbol{p}_{i}^{(k+1)}=\mathcal{C}^{(k)}(x)+\sum_{i_{k} \in U_{t_{k}}} \mu_{i_{k}}(x) \boldsymbol{\tilde\delta}_{i_{k}}^{(k)} .
$$
The regularized RPIA for curve fitting is arranged in Algorithm \ref{algorithm1}.

\begin{algorithm}
\caption{Regularized RPIA for curve fitting}
\label{algorithm1}
\begin{algorithmic}[1]
\STATE \textbf{Input:} Data points \(\{\mathbf{q}_j\}_{j=0}^m\), initial control points \(\{\mathbf{p}_i^{(0)}\}_{i=0}^n\), real increasing sequence \(\{x_j\}_{j=0}^m\), partition \(\{U_i\}_{i=0}^\ell\), maximum iteration number \(\ell\).
\STATE \textbf{Output:} \(\mathcal{C}^{(\ell+1)}(x)\).

\FOR{\(k = 0, 1, 2, \ldots, \ell\)}
      \STATE Generate the blending curve \(\mathcal{C}^{(k)}(x)\) as (\ref{3.1});
      \STATE Calculate the differences as (\ref{3.2});
      \STATE Randomly pick the index \(t_k\) using (\ref{3.3});
      \STATE Compute the adjusting vectors via (\ref{3.4});
      \STATE Update control points via (\ref{3.5});
\ENDFOR
\end{algorithmic}
\end{algorithm}

\subsection{The regularized case of surfaces}
We now turn to the case of surfaces. Let $\boldsymbol{A} \in \mathbb{R}^{(m+1) \times\left(n_1+1\right)}$ and $\boldsymbol{B} \in \mathbb{R}^{(p+1) \times\left(n_2+1\right)}$ be the B-spline design matrices in the u-direction and v-direction respectively. Let $\boldsymbol{Q} \in$ $\mathbb{R}^{(m+1) \times(p+1)}$ the data points . $\boldsymbol{L}_u \in \mathbb{R}^{\left(n_1+1\right) \times\left(n_1+1\right)}$ and $\boldsymbol{L}_v \in \mathbb{R}^{\left(n_2+1\right) \times\left(n_2+1\right)}$ are the second-order difference regularization matrices in the $u, ~ v$ directions respectively. The regularization problem is stated as:
$$
\min _{\boldsymbol{P}}\left\|\boldsymbol{A} \boldsymbol{P} \boldsymbol{B}^T-\boldsymbol{Q}\right\|_F^2+\lambda\left\|\boldsymbol{A} \boldsymbol{P}\boldsymbol{L}_v^T\right\|_F^2+\lambda\left\|\boldsymbol{L_u} \boldsymbol{P}\boldsymbol{B}^T\right\|_F^2+\lambda^2\left\|\boldsymbol{L_u} \boldsymbol{P}\boldsymbol{L_v}^T\right\|_F^2
$$
It is equivalent to the augmented tensor least squares problem:
$$
\min _{\boldsymbol{P}}\left\|\hat{\boldsymbol{A}} \boldsymbol{P} \hat{\boldsymbol{B}}^T-\hat{\boldsymbol{Q}}\right\|_F^2,
$$
where $\hat{\boldsymbol{A}}=\left[\begin{array}{c}\boldsymbol{A} \\ \sqrt{\lambda} \boldsymbol{L}_u\end{array}\right], \hat{\boldsymbol{B}}=\left[\begin{array}{c}\boldsymbol{B} \\ \sqrt{\lambda} \boldsymbol{L}_v\end{array}\right], \hat{\boldsymbol{Q}}=\left[\begin{array}{cc}\boldsymbol{Q} & \mathbf{0} \\ \mathbf{0} & \mathbf{0}\end{array}\right]$.

Let $\left\{U_i\right\}_{i=0}^t$ and $\left\{V_j\right\}_{j=0}^s$ denote the partition of $\left[n_1\right]$ and $\left[n_2\right]$ respectively. Then the initial fitting surface is:
$$
\mathcal{S}^{(0)}(x, y)=\sum_{i=0}^{n_{1}} \sum_{j=0}^{n_{2}} \mu_{i}(x) v_{j}(y) \boldsymbol{P}_{i j}^{(0)}, ~x \in\left[x_{0}, x_{m}\right], ~y \in\left[y_{0}, y_{p}\right],
$$
where $\nu_j(y)$ is another blending basis sequence defined on $[0,1]$ and $\left\{\boldsymbol{P}_{i j}^{(0)}\right\}_{i, j=0}^{n_{1}, n_{2}}$ is the initial control points. And compute the difference according to
$$
\boldsymbol{R}^{(0)}=\hat{\boldsymbol{Q}}-\hat{\boldsymbol{A}} \boldsymbol{P}^{(0)} \hat{\boldsymbol{B}}^T.
$$
Randomly selecting $t_{0} \in[t]$ and $s_{0} \in[s]$ with probabilities
$$
\mathbb{P}\left(\text {Index=} {t_0}\right)=\frac{\sum_{i \in U_{t_0}}\left\|\hat{\boldsymbol{A}}_{:, i}\right\|_F^2}{\sum_{l=0}^t \sum_{i \in U_{l}}\left\|\hat{\boldsymbol{A}}_{:, i}\right\|_F^2}=\frac{\left\|\hat{\boldsymbol{A}}_{:, U_{t_0}}\right\|_F^2}{\|\hat{\boldsymbol{A}}\|_F^2},
$$
$$
\mathbb{P}\left(\text {Index=} {s_0}\right)=\frac{\sum_{j \in V_{s_0}}\left\|\hat{\boldsymbol{B}}_{:, j}\right\|_F^2}{\sum_{l=0}^s \sum_{j \in V_{l}}\left\|\hat{\boldsymbol{B}}_{:, j}\right\|_F^2}=\frac{\left\|\hat{\boldsymbol{B}}_{:, V_{s_0}}\right\|_F^2}{\|\hat{\boldsymbol{B}}\|_F^2}.
$$
We calculate the adjusting vectors according to
$$
\tilde{\boldsymbol\Delta}_{U_{t_0}, V_{s_0}}^{(0)}=\frac{1}{\left\|\hat{\boldsymbol{A}}_{:, U_{t_0}}\right\|_F^2\left\|\hat{\boldsymbol{B}}_{:, V_{s_0}}\right\|_F^2} \hat{\boldsymbol{A}}_{:, U_{t_0}}^T \boldsymbol{R}^{(0)} \hat{\boldsymbol{B}}_{:, V_{s_0}}.
$$
And update the control points in the light of
$$
\boldsymbol{P}_{U_{t_{0}}, V_{s_{0}}}^{(1)}=\boldsymbol{P}_{U_{t_{0}}, V_{s_{0}}}^{(0)}+\tilde{\boldsymbol{\Delta}}_{U_{t_{0}}, V_{s_{0}}}^{(0)},
$$
where the other control points remain unchanged.\\

Recursively, after obtaining the $k$ th surface
\begin{equation}\label{3.6}
\mathcal{S}^{(k)}(x, y)=\sum_{i=0}^{n_{1}} \sum_{j=0}^{n_{2}} \mu_{i}(x) v_{j}(y) \boldsymbol{P}_{i j}^{(k)},
\end{equation}
and the difference
\begin{equation}\label{3.7}
\boldsymbol{R}^{(k)}=\hat{\boldsymbol{Q}}-\hat{\boldsymbol{A}} \boldsymbol{P}^{(k)} \hat{\boldsymbol{B}}^T, 
\end{equation}
we randomly choose $t_{k} \in[t]$ and $s_{k} \in[s]$ with probabilities
\begin{equation}
\left\{
\label{3.8}
\begin{aligned}
    &\mathbb{P}\left(\text {Index=} {t_k}\right)=\frac{\sum_{i \in U_{t_k}}\left\|\hat{\boldsymbol{A}}_{:, i}\right\|_F^2}{\sum_{l=0}^t \sum_{i \in U_{l}}\left\|\hat{\boldsymbol{A}}_{:, i}\right\|_F^2}=\frac{\left\|\hat{\boldsymbol{A}}_{:, U_{t_k}}\right\|_F^2}{\|\hat{\boldsymbol{A}}\|_F^2}, \\
    &\mathbb{P}\left(\text {Index=} {s_k}\right)=\frac{\sum_{j \in V_{s_k}}\left\|\hat{\boldsymbol{B}}_{:, j}\right\|_F^2}{\sum_{l=0}^s \sum_{j \in V_{l}}\left\|\hat{\boldsymbol{B}}_{:, j}\right\|_F^2}=\frac{\left\|\hat{\boldsymbol{B}}_{:, V_{s_k}}\right\|_F^2}{\|\hat{\boldsymbol{B}}\|_F^2} .
\end{aligned}
\right.
\end{equation}
We calculate the adjusting vectors according to
\begin{equation}\label{3.9}
\tilde{\boldsymbol\Delta}_{U_{t_k}, V_{s_k}}^{(k)}=\frac{1}{\left\|\hat{\boldsymbol{A}}_{:, U_{t_k}}\right\|_F^2\left\|\hat{\boldsymbol{B}}_{:, V_{s_k}}\right\|_F^2} \hat{\boldsymbol{A}}_{:, U_{t_k}}^T \boldsymbol{R}^{(k)} \hat{\boldsymbol{B}}_{:, V_{s_k}}.
\end{equation}
Then, the control points in a block form are updated by
\begin{equation}\label{3.10}
\boldsymbol{P}_{U_{t_{k}}, V_{s_{k}}}^{(k+1)}=\boldsymbol{P}_{U_{t_{k}}, V_{s_{k}}}^{(k)}+\tilde{\boldsymbol{\Delta}}_{U_{t_{k}}, V_{s_{k}}}^{(k)},
\end{equation}
where the other control points retain fixed. With the above preparation, the next surface is generated by
$$
\boldsymbol{S}^{(k+1)}(x, y)=\sum_{i=0}^{n_{1}} \sum_{j=0}^{n_{2}} \mu_{i}(x) v_{j}(y) \boldsymbol{P}_{i j}^{(k+1)}=S^{(k)}(x, y)+\sum_{i_{k} \in U_{t_{k}}} \sum_{j_{k} \in V_{s_{k}}} \mu_{i_{k}}(x) v_{j_{k}}(y) \tilde{\boldsymbol\Delta}_{i_{k}, j_{k}}^{(k)} .
$$
The regularized RPIA for surface fitting is arranged in Algorithm~\ref{algorithm2}.

\begin{algorithm}
\caption{Regularized RPIA for surface fitting}
\label{algorithm2}
\begin{algorithmic}[1]
\STATE \textbf{Input:} Data points $\left\{\boldsymbol{Q}_{h l}\right\}_{h, l=0}^{m, p}$, the initial control points $\left\{\boldsymbol{P}_{i j}^{(0)}\right\}_{i, j=0}^{n_{1}, n_{2}}$, two real increasing sequences $\left\{x_{h}\right\}_{h=0}^{m}$ and $\left\{y_{l}\right\}_{l=0}^{p}$, two partitions $\left\{U_{i}\right\}_{i=0}^{t}$ and $\left\{V_{j}\right\}_{j=0}^{s}$, and the maximum iteration number $\ell$.
\STATE \textbf{Output:} \(\mathcal{S}^{(\ell+1)}(x)\).

\FOR{\(k = 0, 1, 2, \ldots, \ell\)}
      \STATE generate the blending surface $S^{(k)}(x, y)$ as (\ref{3.6});
      \STATE calculate the differences as (\ref{3.7});
      \STATE randomly pick the indices $t_{k}$ and $s_{k}$ as (\ref{3.8});
      \STATE compute the adjusting vectors as (\ref{3.9});
      \STATE update the control points as (\ref{3.10});
\ENDFOR
\end{algorithmic}
\end{algorithm}

\section{Convergence analyses for regularized RPIA method}
In this section, we utilize matrix theory to analyze the convergence of regularized RPIA for least-squares fitting.
\subsection{The case of curves}
By the linear algebra theory, $\boldsymbol{p}^{*}$ is a least-squares solution of $\hat{\boldsymbol{A}} \boldsymbol{p}=\hat{\boldsymbol{q}}$ if and only if $\hat{\boldsymbol{A}}^{T} \hat{\boldsymbol{A}} \boldsymbol{p}^{*}=\hat{\boldsymbol{A}}^{T} \hat{\boldsymbol{q}}$. Now we give the convergence analysis of Algorithm \ref{algorithm1} in the following theorem.
\begin{theorem}
Let $\left\{\mu_{i}(x): i \in\left[n_{1}\right]\right\}$ be a blending basis sequence and $\hat{\boldsymbol{A}}$ be the corresponding collocation matrix on the real increasing sequence $\left\{x_{h}\right\}_{h=0}^{m}$. Suppose that $\hat{\boldsymbol{A}}$ has a full-column rank, when the number of data points is larger than that of control points, the fitting curve sequence, generated by regularized RPIA (see Algorithm \ref{algorithm1}), converges to the least-squares fitting solution in expectation.
\end{theorem}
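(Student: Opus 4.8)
The plan is to recognize Algorithm~\ref{algorithm1} as a randomized block coordinate descent applied to the strongly convex quadratic $f(\boldsymbol{p})=\|\hat{\boldsymbol{A}}\boldsymbol{p}-\hat{\boldsymbol{q}}\|_F^2$, and then to exhibit a geometric contraction of a suitable expected potential. Since $\hat{\boldsymbol{A}}$ has full column rank, $\hat{\boldsymbol{A}}^T\hat{\boldsymbol{A}}$ is symmetric positive definite, so the normal equations $\hat{\boldsymbol{A}}^T\hat{\boldsymbol{A}}\boldsymbol{p}^*=\hat{\boldsymbol{A}}^T\hat{\boldsymbol{q}}$ admit a unique solution $\boldsymbol{p}^*$, and it suffices to prove $\mathbb{E}\|\boldsymbol{p}^{(k)}-\boldsymbol{p}^*\|_F^2\to 0$. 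First I would introduce the error $\boldsymbol{e}^{(k)}=\boldsymbol{p}^{(k)}-\boldsymbol{p}^*$ and the optimal residual $\boldsymbol{r}^*=\hat{\boldsymbol{q}}-\hat{\boldsymbol{A}}\boldsymbol{p}^*$, noting that $\boldsymbol{r}^{(k)}=\boldsymbol{r}^*-\hat{\boldsymbol{A}}\boldsymbol{e}^{(k)}$ and that the optimality condition $\hat{\boldsymbol{A}}^T\boldsymbol{r}^*=\mathbf{0}$ makes $\boldsymbol{r}^*$ orthogonal to $\operatorname{range}(\hat{\boldsymbol{A}})$.

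Second, I would establish the deterministic one-step decrease for a fixed block index $t_k$. Writing $\boldsymbol{g}=\hat{\boldsymbol{A}}_{:,U_{t_k}}^T\boldsymbol{r}^{(k)}$ and $\tilde{\mu}_k=1/\|\hat{\boldsymbol{A}}_{:,U_{t_k}}\|_F^2$, the update (\ref{3.5}) changes the residual only through the selected columns, giving $\boldsymbol{r}^{(k+1)}=\boldsymbol{r}^{(k)}-\tilde{\mu}_k\hat{\boldsymbol{A}}_{:,U_{t_k}}\boldsymbol{g}$. Expanding $\|\boldsymbol{r}^{(k+1)}\|_F^2$ and bounding the quadratic term via $\lambda_{\max}(\hat{\boldsymbol{A}}_{:,U_{t_k}}^T\hat{\boldsymbol{A}}_{:,U_{t_k}})\le\|\hat{\boldsymbol{A}}_{:,U_{t_k}}\|_F^2$ yields the sufficient-decrease inequality
\[
\|\boldsymbol{r}^{(k+1)}\|_F^2\le\|\boldsymbol{r}^{(k)}\|_F^2-\frac{\|\hat{\boldsymbol{A}}_{:,U_{t_k}}^T\boldsymbol{r}^{(k)}\|_F^2}{\|\hat{\boldsymbol{A}}_{:,U_{t_k}}\|_F^2}.
\]
Taking the conditional expectation over $t_k$ under the sampling law (\ref{3.3}), the block weights cancel the denominators and, because $\{U_i\}$ partitions the columns, $\sum_t\|\hat{\boldsymbol{A}}_{:,U_t}^T\boldsymbol{r}^{(k)}\|_F^2=\|\hat{\boldsymbol{A}}^T\boldsymbol{r}^{(k)}\|_F^2$, so
\[
\mathbb{E}\!\left[\|\boldsymbol{r}^{(k+1)}\|_F^2\mid\boldsymbol{p}^{(k)}\right]\le\|\boldsymbol{r}^{(k)}\|_F^2-\frac{\|\hat{\boldsymbol{A}}^T\boldsymbol{r}^{(k)}\|_F^2}{\|\hat{\boldsymbol{A}}\|_F^2}.
\]

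The step I expect to be the main obstacle is that $\|\boldsymbol{r}^{(k)}\|_F^2$ does not itself tend to zero — it converges to the nonzero least-squares value $\|\boldsymbol{r}^*\|_F^2$ — so the inequality above cannot be iterated as a contraction directly. I would resolve this by passing to the shifted potential $\Phi^{(k)}=\|\boldsymbol{r}^{(k)}\|_F^2-\|\boldsymbol{r}^*\|_F^2$, which by the orthogonality $\boldsymbol{r}^*\perp\operatorname{range}(\hat{\boldsymbol{A}})$ equals $\|\hat{\boldsymbol{A}}\boldsymbol{e}^{(k)}\|_F^2\ge 0$. Using $\hat{\boldsymbol{A}}^T\boldsymbol{r}^{(k)}=-\hat{\boldsymbol{A}}^T\hat{\boldsymbol{A}}\boldsymbol{e}^{(k)}$ together with the spectral inequality $(\hat{\boldsymbol{A}}^T\hat{\boldsymbol{A}})^2\succeq\sigma_{\min}^2(\hat{\boldsymbol{A}})\,\hat{\boldsymbol{A}}^T\hat{\boldsymbol{A}}$, valid since $\hat{\boldsymbol{A}}^T\hat{\boldsymbol{A}}$ is positive definite, gives $\|\hat{\boldsymbol{A}}^T\boldsymbol{r}^{(k)}\|_F^2\ge\sigma_{\min}^2(\hat{\boldsymbol{A}})\,\Phi^{(k)}$. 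Substituting produces the contraction $\mathbb{E}[\Phi^{(k+1)}\mid\boldsymbol{p}^{(k)}]\le\bigl(1-\sigma_{\min}^2(\hat{\boldsymbol{A}})/\|\hat{\boldsymbol{A}}\|_F^2\bigr)\Phi^{(k)}$, with a factor strictly in $[0,1)$ because $0<\sigma_{\min}^2(\hat{\boldsymbol{A}})\le\|\hat{\boldsymbol{A}}\|_F^2$. Finally I would take total expectations, iterate to obtain $\mathbb{E}[\Phi^{(k)}]\le(1-\sigma_{\min}^2/\|\hat{\boldsymbol{A}}\|_F^2)^k\Phi^{(0)}\to 0$, and conclude via $\|\hat{\boldsymbol{A}}\boldsymbol{e}^{(k)}\|_F^2\ge\sigma_{\min}^2\|\boldsymbol{e}^{(k)}\|_F^2$ that $\mathbb{E}\|\boldsymbol{p}^{(k)}-\boldsymbol{p}^*\|_F^2\to 0$, whence the fitting curves $\mathcal{C}^{(k)}$ converge in expectation to the least-squares solution.
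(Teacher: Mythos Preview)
Your argument is correct, but it follows a genuinely different route from the paper and in fact delivers a stronger conclusion. The paper works with the \emph{first moment} only: it sets $\boldsymbol{z}^{(k)}=\hat{\boldsymbol{A}}(\boldsymbol{p}^{(k)}-\boldsymbol{p}^*)$, takes the conditional expectation of the one-step update to obtain the exact linear recursion $\mathbb{E}[\boldsymbol{z}^{(k+1)}]=(I-\hat{\boldsymbol{A}}\hat{\boldsymbol{A}}^T/\|\hat{\boldsymbol{A}}\|_F^2)\,\mathbb{E}[\boldsymbol{z}^{(k)}]$, left-multiplies by $\hat{\boldsymbol{A}}^T$, and invokes $\rho(I-\hat{\boldsymbol{A}}^T\hat{\boldsymbol{A}}/\|\hat{\boldsymbol{A}}\|_F^2)<1$ to conclude $\mathbb{E}[\boldsymbol{p}^{(k)}]\to\boldsymbol{p}^*$. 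You instead run the standard randomized block coordinate descent analysis on the \emph{second moment}: a sufficient-decrease inequality for $\|\boldsymbol{r}^{(k)}\|_F^2$, the shifted potential $\Phi^{(k)}=\|\hat{\boldsymbol{A}}\boldsymbol{e}^{(k)}\|_F^2$, and the spectral bound $(\hat{\boldsymbol{A}}^T\hat{\boldsymbol{A}})^2\succeq\sigma_{\min}^2(\hat{\boldsymbol{A}})\,\hat{\boldsymbol{A}}^T\hat{\boldsymbol{A}}$ combine to give $\mathbb{E}\|\boldsymbol{p}^{(k)}-\boldsymbol{p}^*\|_F^2\to 0$ at an explicit geometric rate $1-\sigma_{\min}^2(\hat{\boldsymbol{A}})/\|\hat{\boldsymbol{A}}\|_F^2$. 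The paper's proof is algebraically cleaner (an equality for the mean, no inequalities), while your approach yields mean-square convergence---hence convergence in probability---together with a quantitative rate, which is strictly more informative than convergence of the expectation alone.
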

\begin{proof}
Let us introduce an auxiliary intermediate $\boldsymbol{z}^{(k)}=\hat{\boldsymbol{A}}\left(\boldsymbol{p}^{(k)}-\boldsymbol{p}^{*}\right)$ for $k=0,1,2, \cdots$, where $\boldsymbol{p}^{*}:=\left(\hat{\boldsymbol{A}}^{T} \hat{\boldsymbol{A}}\right)^{-1} \hat{\boldsymbol{A}}^{T} \hat{\boldsymbol{q}}$ is a least-squares solution. Multiplying both sides by the transpose of the collocation matrix yields
$$
\hat{\boldsymbol{A}}^{T} \boldsymbol{z}^{(k)}=\hat{\boldsymbol{A}}^{T} \hat{\boldsymbol{A}} \boldsymbol{p}^{(k)}-\left(\hat{\boldsymbol{A}}^{T} \hat{\boldsymbol{A}}\right)\left(\hat{\boldsymbol{A}}^{T} \hat{\boldsymbol{A}}\right)^{-1} \hat{\boldsymbol{A}}^{T} \hat{\boldsymbol{q}}=-\hat{\boldsymbol{A}}^{T} \boldsymbol{r}^{(k)},
$$
with $\boldsymbol{r}^{(k)}=\hat{\boldsymbol{q}}-\hat{\boldsymbol{A}} \boldsymbol{p}^{(k)}$. Combined with formula (\ref{3.5}), it implies that
$$
\hat{\boldsymbol{A}} \boldsymbol{\delta}^{(k)}=\hat{\boldsymbol{A}}\left[\boldsymbol{\delta}_{0}^{(k)} \boldsymbol{\delta}_{1}^{(k)} \cdots \boldsymbol{\delta}_{n}^{(k)}\right]^{T}=\frac{\hat{\boldsymbol{A}}_{:, U_{t_{k}}} \hat{\boldsymbol{A}}_{:, U_{t_{k}}}^{T} \boldsymbol{r}^{(k)}}{\left\|\hat{\boldsymbol{A}}_{:, U_{t_{k}}}\right\|_{F}^{2}}=-\frac{\hat{\boldsymbol{A}}_{:, U_{t_{k}}} \hat{\boldsymbol{A}}_{:, U_{t_{k}}}^{T} \boldsymbol{z}^{(k)}}{\left\|\hat{\boldsymbol{A}}_{:, U_{t_{k}}}\right\|_{F}^{2}},
$$
and
$$
\boldsymbol{z}^{(k+1)}=\hat{\boldsymbol{A}}\left(\boldsymbol{p}^{(k)}+\boldsymbol{\delta}^{(k)}-\boldsymbol{p}^{*}\right)=\left(\boldsymbol{I}_{m+1}-\frac{\hat{\boldsymbol{A}}_{:, U_{t_{k}}} \hat{\boldsymbol{A}}_{:, U_{t_{k}}}^{T}}{\left\|\hat{\boldsymbol{A}}_{:, U_{t_{k}}}\right\|_{F}^{2}}\right) \boldsymbol{z}^{(k)}.
$$
Let $\mathbb{E}_{k}^{U}[\cdot]=\mathbb{E}\left[\cdot \mid t_{0}, t_{1}, \cdots, t_{k}\right]$ denote the conditional expectation conditioned on the first $k$ iterations of Algorithm \ref{algorithm1} , where $t_{k}$ means that the $t_{k}$ th index set $U_{t_{k}}$ is chosen. By taking this conditional expectation, it yields that
$$
\begin{aligned}
\mathbb{E}_k^U\left[\boldsymbol{z}^{(k+1)}\right] & =\boldsymbol{z}^{(k)}-\mathbb{E}_k^U\left[\frac{\hat{\boldsymbol{A}}_{:, U_{t_k}} \hat{\boldsymbol{A}}_{:, U_{t_k}}^T}{\left\|\hat{\boldsymbol{A}}_{:, U_{t_k}}\right\|_F^2}\right] \boldsymbol{z}^{(k)} \\
& =\boldsymbol{z}^{(k)}-\sum_{i=0}^t \frac{\left\|\hat{\boldsymbol{A}}_{:, U_i}\right\|_F^2}{\|\hat{\boldsymbol{A}}\|_F^2} \frac{\hat{\boldsymbol{A}}_{:, U_i} \hat{\boldsymbol{A}}_{:, U_i}^T}{\left\|\hat{\boldsymbol{A}}_{:, U_i}\right\|_F^2} \boldsymbol{z}^{(k)} \\
& =\left(\boldsymbol{I}-\frac{\hat{\boldsymbol{A}} \hat{\boldsymbol{A}}^T}{\|\hat{\boldsymbol{A}}\|_F^2}\right) \boldsymbol{z}^{(k)} .
\end{aligned}
$$
Based on the law of total expectation and unrolling the recurrence, it gives that
$$
\mathbb{E}\left[\boldsymbol{z}^{(k+1)}\right]=\left(\boldsymbol{I}-\frac{\hat{\boldsymbol{A}} \hat{\boldsymbol{A}}^{T}}{\|\hat{\boldsymbol{A}}\|_{F}^{2}}\right) \mathbb{E}\left[\boldsymbol{z}^{(k)}\right]=\cdots=\left(\boldsymbol{I}-\frac{\hat{\boldsymbol{A}} \hat{\boldsymbol{A}}^{T}}{\|\hat{\boldsymbol{A}}\|_{F}^{2}}\right)^{k+1} \boldsymbol{z}^{(0)}
$$
Multiplying left by $\hat{\boldsymbol{A}}^{T}$, we have

$$
\hat{\boldsymbol{A}}^{T} \mathbb{E}\left[\boldsymbol{z}^{(k+1)}\right]=\left(\boldsymbol{I}-\frac{\hat{\boldsymbol{A}}^{T} \hat{\boldsymbol{A}}}{\|\hat{\boldsymbol{A}}\|_{F}^{2}}\right)^{k+1} \hat{\boldsymbol{A}}^{T} \boldsymbol{z}^{(0)}.
$$
Since $\hat{\boldsymbol{A}}$ is full of column rank, $\hat{\boldsymbol{A}}^{T} \hat{\boldsymbol{A}}$ is positive definite. It leads to
$$
0 \leq \rho\left(\boldsymbol{I}-\frac{\hat{\boldsymbol{A}}^{T} \hat{\boldsymbol{A}}}{\|\hat{\boldsymbol{A}}\|_{F}^{2}}\right)<1,
$$
where $\rho(\boldsymbol{M})$ is the spectral radius of $\boldsymbol{M}$. Therefore,
$$
\lim _{k \rightarrow \infty}\left(\boldsymbol{I}-\frac{\hat{\boldsymbol{A}}^{T} \hat{\boldsymbol{A}}}{\|\hat{\boldsymbol{A}}\|_{F}^{2}}\right)^{k}=\boldsymbol{O},
$$
where $\boldsymbol{O}$ is the rank zero matrix. It follows that
$$
\mathbb{E}\left[\boldsymbol{p}^{(\infty)}\right]=\left(\hat{\boldsymbol{A}}^{T} \hat{\boldsymbol{A}}\right)^{-1}\left(\hat{\boldsymbol{A}}^{T} \mathbb{E}\left[\boldsymbol{z}^{(\infty)}\right]+\hat{\boldsymbol{A}}^{T} \hat{\boldsymbol{A}} \boldsymbol{p}^{*}\right)=\left(\hat{\boldsymbol{A}}^{T} \hat{\boldsymbol{A}}\right)^{-1} \hat{\boldsymbol{A}}^{T} \hat{\boldsymbol{A}} \boldsymbol{p}^{*}=\boldsymbol{p}^{*}
$$
According to algebraic considerations, the regularized RPIA limit curve is the least-squares fitting result to the given data points.
\end{proof}
\subsection{The case of surfaces}
Similar to the case of curves, the surface sequence, generated by Algorithm \ref{algorithm2}, is convergent in expectation to the least-squares fitting result for the given data points. The result is stated as follows.
\begin{theorem}
Let $\left\{\mu_{i}(x): i \in\left[n_{1}\right]\right\}$ and $\left\{v_{j}(y): j \in\left[n_{2}\right]\right\}$ be two blending basis sequences, and $\hat{\boldsymbol{A}}$ and $\hat{\boldsymbol{B}}$ be the corresponding collocation matrices on the real increasing sequences $\left\{x_{h}\right\}_{h=0}^{m}$ and $\left\{y_{l}\right\}_{l=0}^{p}$, respectively. Suppose that $\hat{\boldsymbol{A}}$ and $\hat{\boldsymbol{B}}$ have a full-column rank, when the number of data points is larger than that of control points, the fitting surface sequence, generated by regularized RPIA (see Algorithm \ref{algorithm2}), converges to the least-squares fitting solution in expectation.
\end{theorem}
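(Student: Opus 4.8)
The plan is to mirror the curve-case argument of the preceding theorem, replacing its one-sided matrix power by a two-sided (Sylvester) recurrence. First I would fix the target $\boldsymbol{P}^{*}$ as the solution of the matrix normal equation
$$
\hat{\boldsymbol{A}}^{T}\hat{\boldsymbol{A}}\,\boldsymbol{P}^{*}\,\hat{\boldsymbol{B}}^{T}\hat{\boldsymbol{B}}=\hat{\boldsymbol{A}}^{T}\hat{\boldsymbol{Q}}\hat{\boldsymbol{B}}
$$
associated with $\min_{\boldsymbol{P}}\|\hat{\boldsymbol{A}}\boldsymbol{P}\hat{\boldsymbol{B}}^{T}-\hat{\boldsymbol{Q}}\|_{F}^{2}$. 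Since $\hat{\boldsymbol{A}}$ and $\hat{\boldsymbol{B}}$ have full column rank, $\hat{\boldsymbol{A}}^{T}\hat{\boldsymbol{A}}$ and $\hat{\boldsymbol{B}}^{T}\hat{\boldsymbol{B}}$ are invertible, so $\boldsymbol{P}^{*}=(\hat{\boldsymbol{A}}^{T}\hat{\boldsymbol{A}})^{-1}\hat{\boldsymbol{A}}^{T}\hat{\boldsymbol{Q}}\hat{\boldsymbol{B}}(\hat{\boldsymbol{B}}^{T}\hat{\boldsymbol{B}})^{-1}$ is well defined, and I introduce the auxiliary quantity $\boldsymbol{Z}^{(k)}=\hat{\boldsymbol{A}}(\boldsymbol{P}^{(k)}-\boldsymbol{P}^{*})\hat{\boldsymbol{B}}^{T}$, the surface analogue of $\boldsymbol{z}^{(k)}$.

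The first key step is the identity $\hat{\boldsymbol{A}}^{T}\boldsymbol{R}^{(k)}\hat{\boldsymbol{B}}=-\hat{\boldsymbol{A}}^{T}\boldsymbol{Z}^{(k)}\hat{\boldsymbol{B}}$, obtained by substituting $\boldsymbol{R}^{(k)}=\hat{\boldsymbol{Q}}-\hat{\boldsymbol{A}}\boldsymbol{P}^{(k)}\hat{\boldsymbol{B}}^{T}$ from (\ref{3.7}) and cancelling $\hat{\boldsymbol{A}}^{T}\hat{\boldsymbol{Q}}\hat{\boldsymbol{B}}$ via the normal equation above. Because the block update (\ref{3.9})--(\ref{3.10}) only modifies the rows indexed by $U_{t_{k}}$ and the columns indexed by $V_{s_{k}}$, the increment $\hat{\boldsymbol{A}}(\boldsymbol{P}^{(k+1)}-\boldsymbol{P}^{(k)})\hat{\boldsymbol{B}}^{T}$ collapses to $\hat{\boldsymbol{A}}_{:,U_{t_{k}}}\tilde{\boldsymbol{\Delta}}^{(k)}_{U_{t_{k}},V_{s_{k}}}\hat{\boldsymbol{B}}_{:,V_{s_{k}}}^{T}$; inserting (\ref{3.9}) and the identity yields the stochastic recurrence
$$
\boldsymbol{Z}^{(k+1)}=\boldsymbol{Z}^{(k)}-\frac{\hat{\boldsymbol{A}}_{:,U_{t_{k}}}\hat{\boldsymbol{A}}_{:,U_{t_{k}}}^{T}}{\|\hat{\boldsymbol{A}}_{:,U_{t_{k}}}\|_{F}^{2}}\,\boldsymbol{Z}^{(k)}\,\frac{\hat{\boldsymbol{B}}_{:,V_{s_{k}}}\hat{\boldsymbol{B}}_{:,V_{s_{k}}}^{T}}{\|\hat{\boldsymbol{B}}_{:,V_{s_{k}}}\|_{F}^{2}}.
$$

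Next, taking the conditional expectation over the independently drawn indices $t_{k}$ and $s_{k}$ factorizes the two projector-like factors, and the selection probabilities (\ref{3.8}) telescope the sums exactly as in the curve case, giving $\mathbb{E}_{k}[\boldsymbol{Z}^{(k+1)}]=\boldsymbol{Z}^{(k)}-\tfrac{\hat{\boldsymbol{A}}\hat{\boldsymbol{A}}^{T}}{\|\hat{\boldsymbol{A}}\|_{F}^{2}}\boldsymbol{Z}^{(k)}\tfrac{\hat{\boldsymbol{B}}\hat{\boldsymbol{B}}^{T}}{\|\hat{\boldsymbol{B}}\|_{F}^{2}}$. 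After the law of total expectation, I would multiply on the left by $\hat{\boldsymbol{A}}^{T}$ and on the right by $\hat{\boldsymbol{B}}$ and set $\boldsymbol{W}^{(k)}=\hat{\boldsymbol{A}}^{T}\mathbb{E}[\boldsymbol{Z}^{(k)}]\hat{\boldsymbol{B}}$; the factors regroup into
$$
\boldsymbol{W}^{(k+1)}=\boldsymbol{W}^{(k)}-\frac{\hat{\boldsymbol{A}}^{T}\hat{\boldsymbol{A}}}{\|\hat{\boldsymbol{A}}\|_{F}^{2}}\,\boldsymbol{W}^{(k)}\,\frac{\hat{\boldsymbol{B}}^{T}\hat{\boldsymbol{B}}}{\|\hat{\boldsymbol{B}}\|_{F}^{2}},
$$
a Sylvester recurrence whose coefficient matrices are now symmetric positive definite.

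To finish, I would vectorize via $\mathrm{vec}(\boldsymbol{L}\boldsymbol{X}\boldsymbol{R})=(\boldsymbol{R}^{T}\otimes\boldsymbol{L})\,\mathrm{vec}(\boldsymbol{X})$, rewriting the recurrence as $\mathrm{vec}(\boldsymbol{W}^{(k+1)})=\bigl(\boldsymbol{I}-\tfrac{\hat{\boldsymbol{B}}^{T}\hat{\boldsymbol{B}}}{\|\hat{\boldsymbol{B}}\|_{F}^{2}}\otimes\tfrac{\hat{\boldsymbol{A}}^{T}\hat{\boldsymbol{A}}}{\|\hat{\boldsymbol{A}}\|_{F}^{2}}\bigr)\mathrm{vec}(\boldsymbol{W}^{(k)})$. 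The eigenvalues of the Kronecker product are the products $\mu_{i}\nu_{j}$ of the normalized eigenvalues of $\tfrac{\hat{\boldsymbol{A}}^{T}\hat{\boldsymbol{A}}}{\|\hat{\boldsymbol{A}}\|_{F}^{2}}$ and $\tfrac{\hat{\boldsymbol{B}}^{T}\hat{\boldsymbol{B}}}{\|\hat{\boldsymbol{B}}\|_{F}^{2}}$, each lying in $(0,1]$ since $\|\hat{\boldsymbol{A}}\|_{F}^{2}=\mathrm{tr}(\hat{\boldsymbol{A}}^{T}\hat{\boldsymbol{A}})$ is the sum of its positive eigenvalues; hence every eigenvalue $1-\mu_{i}\nu_{j}$ of the iteration matrix lies in $[0,1)$, its spectral radius is strictly below one, and $\boldsymbol{W}^{(k)}\to\boldsymbol{O}$. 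Inverting the full-rank factors then gives $\mathbb{E}[\boldsymbol{P}^{(k)}]-\boldsymbol{P}^{*}=(\hat{\boldsymbol{A}}^{T}\hat{\boldsymbol{A}})^{-1}\boldsymbol{W}^{(k)}(\hat{\boldsymbol{B}}^{T}\hat{\boldsymbol{B}})^{-1}\to\boldsymbol{O}$, so the surface sequence converges in expectation to the least-squares fit. The main obstacle, relative to the curve case, is precisely this two-sided tensor structure: the expectation must factor across the independent draws of $t_{k}$ and $s_{k}$, and the simple spectral-radius bound of the one-sided proof must be replaced by the Kronecker-product (equivalently, simultaneous-diagonalization) analysis of the Sylvester operator.
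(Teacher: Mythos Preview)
Your proposal is correct and follows essentially the same route as the paper's proof: the same auxiliary quantity $\boldsymbol{Z}^{(k)}=\hat{\boldsymbol{A}}(\boldsymbol{P}^{(k)}-\boldsymbol{P}^{*})\hat{\boldsymbol{B}}^{T}$, the same normal-equation identity, the same factorization of the conditional expectation over the independent draws, and the same Kronecker-product spectral-radius bound. The only cosmetic differences are that the paper works frontal-slice by frontal-slice on a third-order tensor (to accommodate vector-valued data points) and vectorizes $\boldsymbol{Z}^{(k)}$ before left-multiplying by $\hat{\boldsymbol{B}}^{T}\otimes\hat{\boldsymbol{A}}^{T}$, whereas you first form $\boldsymbol{W}^{(k)}=\hat{\boldsymbol{A}}^{T}\mathbb{E}[\boldsymbol{Z}^{(k)}]\hat{\boldsymbol{B}}$ and then vectorize---equivalent by the $\mathrm{vec}$--Kronecker identity.
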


\begin{proof}
As a preparatory step, let us introduce an auxiliary third-order tensor

$$
\boldsymbol{Z}^{(k)}=\left[\boldsymbol{Z}_{i j f}^{(k)}\right]_{i=0, j=0, f=1}^{m, p, 3},
$$
whose $f$ th frontal slice is given by
$$
\boldsymbol{Z}_{(f)}^{(k)}=\hat{\boldsymbol{A}}\left(\boldsymbol{P}_{(f)}^{(k)}-\boldsymbol{P}_{(f)}^{*}\right) \hat{\boldsymbol{B}^T}
$$
with $\boldsymbol{P}_{(f)}^{*}=\left(\hat{\boldsymbol{A}}^{T} \hat{\boldsymbol{A}}\right)^{-1} \hat{\boldsymbol{A}}^{T} \boldsymbol{Q}_{(f)} \hat{\boldsymbol{B}}\left(\hat{\boldsymbol{B}}^{T} \hat{\boldsymbol{B}}\right)^{-1}$ for $f=1,2,3$ and $k=0,1,2, \cdots$. Left multiplication by $\boldsymbol{A}^{T}$ and right multiplication by $\hat{\boldsymbol{B}}$ to $\boldsymbol{Z}_{(f)}^{(k)}$ lead to
$$
\hat{\boldsymbol{A}}^{T} \boldsymbol{Z}_{(f)}^{(k)} \hat{\boldsymbol{B}}=\hat{\boldsymbol{A}}^{T} \hat{\boldsymbol{A}} \boldsymbol{P}_{(f)}^{(k)} \hat{\boldsymbol{B}}^{T} \hat{\boldsymbol{B}}-\hat{\boldsymbol{A}}^{T} \hat{\boldsymbol{A}} \boldsymbol{P}_{(f)}^{*} \hat{\boldsymbol{B}}^{T} \hat{\boldsymbol{B}}=-\hat{\boldsymbol{A}}^{T} \boldsymbol{R}_{(f)}^{(k)} \hat{\boldsymbol{B}}
$$
with $\boldsymbol{R}_{(f)}^{(k)}=\boldsymbol{Q}-\hat{\boldsymbol{A}} \boldsymbol{P}_{(f)}^{(k)} \hat{\boldsymbol{B}}^T$. It follows that
$$
\begin{aligned}
\boldsymbol{Z}_{(f)}^{(k+1)} & =\hat{\boldsymbol{A}}\left(\boldsymbol{P}_{(f)}^{(k+1)}-\boldsymbol{P}_{(f)}^{*}\right) \hat{\boldsymbol{B}}^T \\
& =\hat{\boldsymbol{A}}\left(\boldsymbol{P}_{(f)}^{(k)}-\boldsymbol{P}_{(f)}^{*}+\boldsymbol{\Delta}_{(f)}^{(k)}\right) \hat{\boldsymbol{B}}^T \\
& =\boldsymbol{Z}_{(f)}^{(k)}-\frac{\hat{\boldsymbol{A}}_{:, U_{t_{k}}} \hat{\boldsymbol{A}}_{:, U_{t_{k}}}^{T} \boldsymbol{Z}_{(f)}^{(k)} \hat{\boldsymbol{B}}_{:,V_{s_{k}}} \hat{\boldsymbol{B}}_{:,V_{s_{k}}}^{T}}{\left\|\hat{\boldsymbol{A}}_{:, U_{t_{k}}}\right\|_{F}^{2}\left\|\hat{\boldsymbol{B}}_{V_{:,s_{k}}}\right\|_{F}^{2}},
\end{aligned}
$$
where the last equality is from
$$
\hat{\boldsymbol{A}} \boldsymbol{\Delta}_{(f)}^{(k)} \hat{\boldsymbol{B}}=\frac{\hat{\boldsymbol{A}} \widehat{\boldsymbol{I}}_{:, U_{t_{k}}} \hat{\boldsymbol{A}}_{:, U_{t_{k}}}^{T} \boldsymbol{R}_{(f)}^{(k)} \hat{\boldsymbol{B}}_{:,V_{s_{k}}}\widehat{\boldsymbol{I}}_{V_{s_{k}},:} \hat{\boldsymbol{B}}^{T}}{\left\|\hat{\boldsymbol{A}}_{:, U_{t_{k}}}\right\|_{F}^{2}\left\|\hat{\boldsymbol{B}}_{:,V_{s_{k}}}\right\|_{F}^{2}}
$$
$$
\begin{aligned}
& =\frac{\hat{\boldsymbol{A}}_{:, U_{t_{k}}} \hat{\boldsymbol{I}}_{:, U_{t_{k}}}^{T} \hat{\boldsymbol{A}}^{T} \boldsymbol{R}_{(f)}^{(k)} \hat{\boldsymbol{B}}\hat{\boldsymbol{I}}_{V_{s_{k}},:}^{T} \boldsymbol{B}^{T} _{:,V_{s_{k}}}}{\left\|\hat{\boldsymbol{A}}_{:, U_{t_{k}}}\right\|_{F}^{2}\left\|\hat{\boldsymbol{B}} _{:,V_{s_{k}}}\right\|_{F}^{2}} \\
& =-\frac{\hat{\boldsymbol{A}}_{:, U_{t_{k}}} \hat{\boldsymbol{A}}_{:, U_{t_{k}}}^{T} \boldsymbol{Z}_{(f)}^{(k)} \hat{\boldsymbol{B}}_{:,V_{s_{k}}} \hat{\boldsymbol{B}}_{:,V_{s_{k}}}^{T}}{\left\|\hat{\boldsymbol{A}}_{:, U_{t_{k}}}\right\|_{F}^{2}\left\|\hat{\boldsymbol{B}}_{:,V_{s_{k}}}\right\|_{F}^{2}}
\end{aligned}
$$
with $\widehat{\boldsymbol{I}}$ being an identity matrix. Let $\mathbb{E}_{k}^{U, V}[\cdot]=\mathbb{E}\left[\cdot \mid s_{0}, t_{0}, s_{1}, t_{1}, \cdots, s_{k}, t_{k}\right]$ denote the conditional expectation conditioned on the first $k$ iterations of Algorithm \ref{algorithm2}, where the $t_{k}$ th index set $U_{t_{k}}$ and the $s_{k}$ th index set $V_{s_{k}}$ are chosen as $\mathbb{E}_{k}^{V}[\cdot]=\mathbb{E}\left[\cdot \mid s_{0}, t_{0}, \cdots, s_{k-1}, t_{k-1}, t_{k}\right]$ and $\mathbb{E}_{k}^{U}[\cdot]=\mathbb{E}\left[\cdot \mid s_{0}, t_{0}, \cdots, s_{k-1}, t_{k-1}, s_{k}\right]$, respectively. We have

$$
\begin{aligned}
\mathbb{E}_{k}^{U, V}\left[\boldsymbol{Z}_{(f)}^{(k+1)}\right] & =\boldsymbol{Z}_{(f)}^{(k)}-\mathbb{E}_{k}^{U, V}\left[\frac{\hat{\boldsymbol{A}}_{:, U_{t_{k}}} \hat{\boldsymbol{A}}_{:, U_{t_{k}}}^{T} \boldsymbol{Z}_{(f)}^{(k)} \hat{\boldsymbol{B}}_{:,V_{s_{k}}} \hat{\boldsymbol{B}}^{T}_{:,V_{s_{k}}}}{\left\|\hat{\boldsymbol{A}}_{:, U_{t_{k}}}\right\|_{F}^{2}\left\|\hat{\boldsymbol{B}}_{:,V_{s_{k}}}\right\|_{F}^{2}}\right] \\
& =\boldsymbol{Z}_{(f)}^{(k)}-\sum_{i=0}^{t} \sum_{j=0}^{s} \frac{\left\|\hat{\boldsymbol{A}}_{:, U_{i}}\right\|_{F}^{2}}{\|\hat{\boldsymbol{A}}\|_{F}^{2}} \frac{\left\|\hat{\boldsymbol{B}}_{:,V_{j}}\right\|_{F}^{2}}{\|\hat{\boldsymbol{B}}\|_{F}^{2}} \frac{\hat{\boldsymbol{A}}_{:, U_{i}} \hat{\boldsymbol{A}}_{:, U_{i}}^{T} \boldsymbol{Z}_{(f)}^{(k)} \hat{\boldsymbol{B}}_{:,V_{j}} \hat{\boldsymbol{B}}_{:,V_{j}}^{T}}{\left\|\hat{\boldsymbol{A}}_{:, U_{i}}\right\|_{F}^{2}\left\|\hat{\boldsymbol{B}}_{:,V_{j}}\right\|_{F}^{2}} \\
& =\boldsymbol{Z}_{(f)}^{(k)}-\frac{\hat{\boldsymbol{A}} \hat{\boldsymbol{A}}^{T} \boldsymbol{Z}_{(f)}^{(k)} \hat{\boldsymbol{B}} \hat{\boldsymbol{B}}^{T}}{\|\hat{\boldsymbol{A}}\|_{F}^{2}\|\hat{\boldsymbol{B}}\|_{F}^{2}} .
\end{aligned}
$$
By the law of total expectation, it yields that
$$
\mathbb{E}\left[\boldsymbol{Z}_{(f)}^{(k+1)}\right]=\mathbb{E}\left[\boldsymbol{Z}_{(f)}^{(k)}\right]-\frac{\hat{\boldsymbol{A}} \hat{\boldsymbol{A}}^{T} \mathbb{E}\left[\boldsymbol{Z}_{(f)}^{(k)}\right] \hat{\boldsymbol{B}} \hat{\boldsymbol{B}}^{T}}{\|\hat{\boldsymbol{A}}\|_{F}^{2}\|\hat{\boldsymbol{B}}^{T}\|_{F}^{2}}.
$$
Let $\boldsymbol{Z}_{(f)}^{(k)}$ be arranged into a row partition, i.e.,
$$
\widetilde{\mathbf{z}}_{(f)}^{(k)}=\left[\begin{array}{llllllllll}
Z_{00 f}^{(k)} & \cdots & Z_{m 0 f}^{(k)} & Z_{01 f}^{(k)} & \cdots & Z_{m 1 f}^{(k)} & \cdots & Z_{0 p f}^{(k)} & \cdots & Z_{m p f}^{(k)}
\end{array}\right]^{T},
$$
for $f=1,2,3$ and $k=0,1,2 \cdots$. It achieves that
$$
\begin{aligned}
& \mathbb{E}\left[\widetilde{\boldsymbol{z}}_{(f)}^{(k+1)}\right]=\left(\boldsymbol{I}_{(m+1)(p+1)}-\left(\frac{\hat{\boldsymbol{B}} \hat{\boldsymbol{B}}^{T}}{\|\hat{\boldsymbol{B}}\|_{F}^{2}}\right) \otimes\left(\frac{\hat{\boldsymbol{A}} \hat{\boldsymbol{A}}^{T}}{\|\hat{\boldsymbol{A}}\|_{F}^{2}}\right)\right) \mathbb{E}\left[\widetilde{\boldsymbol{z}}_{(f)}^{(k)}\right] \\
& \vdots \\
&=\left(\boldsymbol{I}_{(m+1)(p+1)}-\left(\frac{\hat{\boldsymbol{B}} \hat{\boldsymbol{B}}^{T}}{\|\hat{\boldsymbol{B}}\|_{F}^{2}}\right) \otimes\left(\frac{\hat{\boldsymbol{A}} \hat{\boldsymbol{A}}^{T}}{\|\hat{\boldsymbol{A}}\|_{F}^{2}}\right)\right)^{k+1} \widetilde{\mathbf{z}}_{(f)}^{(0)}
\end{aligned}
$$
Multiplying left by $\hat{\boldsymbol{B}}^{T} \otimes \hat{\boldsymbol{A}}^{T}$, we have
$$
\left(\hat{\boldsymbol{B}}^{T} \otimes \hat{\boldsymbol{A}}^{T}\right) \mathbb{E}\left[\widetilde{\boldsymbol{z}}_{(f)}^{(k+1)}\right]=\left(\boldsymbol{I}_{\left(n_{1}+1\right)\left(n_{2}+1\right)}-\left(\frac{\hat{\boldsymbol{B}}^{T} \hat{\boldsymbol{B}}}{\|\hat{\boldsymbol{B}}\|_{F}^{2}}\right) \otimes\left(\frac{\hat{\boldsymbol{A}}^{T} \hat{\boldsymbol{A}}}{\|\hat{\boldsymbol{A}}\|_{F}^{2}}\right)\right)^{k+1}\left(\hat{\boldsymbol{B}}^{T} \otimes \hat{\boldsymbol{A}}^{T}\right) \widetilde{\boldsymbol{z}}_{(f)}^{(0)} .
$$
Since $\hat{\boldsymbol{A}}$ and $\hat{\boldsymbol{B}}$ are full of column rank, $\hat{\boldsymbol{A}}^{T} \hat{\boldsymbol{A}}$ and $\hat{\boldsymbol{B}}^{T} \hat{\boldsymbol{B}}$ are positive definite. It leads to
$$
0 \leq \rho\left(\boldsymbol{I}-\left(\frac{\hat{\boldsymbol{B}}^{T} \hat{\boldsymbol{B}}}{\|\hat{\boldsymbol{B}}\|_{F}^{2}}\right) \otimes\left(\frac{\hat{\boldsymbol{A}}^{T} \hat{\boldsymbol{A}}}{\|\hat{\boldsymbol{A}}\|_{F}^{2}}\right)\right)<1 .
$$
Therefore,
$$
\lim _{k \rightarrow \infty}\left(\boldsymbol{I}-\left(\frac{\hat{\boldsymbol{B}}^{T} \hat{\boldsymbol{B}}}{\|\hat{\boldsymbol{B}}\|_{F}^{2}}\right) \otimes\left(\frac{\hat{\boldsymbol{A}}^{T} \hat{\boldsymbol{A}}}{\|\hat{\boldsymbol{A}}\|_{F}^{2}}\right)\right)^{k}=\boldsymbol{O} .
$$
It follows that
$$
\mathbb{E}\left[\boldsymbol{P}_{(f)}^{(\infty)}\right]=\left(\hat{\boldsymbol{A}}^{T} \hat{\boldsymbol{A}}\right)^{-1}\left(\hat{\boldsymbol{A}}^{T} \mathbb{E}\left[\boldsymbol{Z}_{(f)}^{(\infty)}\right] \hat{\boldsymbol{B}}+\hat{\boldsymbol{A}}^{T} \hat{\boldsymbol{A}} \boldsymbol{P}_{(f)}^{*} \hat{\boldsymbol{B}}^{T} \hat{\boldsymbol{B}}\right)\left(\hat{\boldsymbol{B}}^{T} \hat{\boldsymbol{B}}\right)^{-1}=\boldsymbol{P}_{(f)}^{*},
$$
which indicates that the sequence of surfaces, generated by regularized RPIA, converges to the least-squares fitting result in expectation.

\end{proof}
\section{Optimal estimation for regularization parameter}\label{section5}
We proposed algorithms to solve data fitting problems.
In the following, we will illustrate the insight of the regularization method in denoise aspects. Denote measurements $q^e=q+e$, $q$ is exact data, $e$ is independent and identically distributed random noise, and $\mathbb{E}[e_i]=0$, $\mathbb{E}[e_i^2]=\sigma^2$. Our aim is to find $p$, such that $Ap$ close to $q$, $A$ is a design matrix. The direct approach is to use regularization methods to denoise noisy data, and then perform data fitting. The process is as follows.\\
(i) Find $u^*$ satisfies $\min\limits_{u} \|u-q^e\|^2 + \lambda\|Lu\|^2$. It gives that $u^*=(I+\lambda L^TL)^{-1}q^e$.\\
(ii) For $u^*$, find $p^*$ satisfies $\min\limits_{p} \|Ap-u^*\|^2$. It gives that $p^*=(A^TA)^{-1}A^{T}u^*$.

We try to find $p^*$ directly, which satisfies $\min\limits_{p} \|Ap-q^e\|^2 + \lambda\|\Gamma p\|^2$, $\Gamma$ is a positive definite matrix. 
Measurements $q^e=A\bar{p}+\epsilon+e$, where $q=A\bar{p}+\epsilon$. When $\epsilon$ are very small variables relative to $e$, this is an intuitively feasible idea. It is worth noting that only one inversion is needed here. $p^*=(A^TA+\lambda \Gamma^T\Gamma)^{-1}A^Tq^e$. There exists an invertible matrix $B$, such that $\Gamma B=I$, $I$ is identity matrix. Let $\hat{p^*}$ be the solution of $B^T(A^TA+\lambda\Gamma^T\Gamma)B\hat{p^*}=B^TA^Tq^e$. Denote $Q=AB$, we have
$(Q^TQ+\lambda I)\hat{p^*}=Q^Tq^e$. Hence, $\hat{p^*}$ satisfies $\min\limits_{\hat{p}} \|Q\hat{p}-q^e\|^2 + \lambda\|\hat{p}\|^2$, and $B\hat{p^*}=p^*$. We will conduct a stochastic convergence analysis on this method.

\begin{assumption}\label{ass-1} We assume that the eigenvalues, $\rho_1\geq\rho_2\geq\cdots$, of the eigenvalue problem
\begin{align*}
(Q\varphi,Qu)=\rho(\varphi,u),~~\forall u\in R^n,    
\end{align*}
satisfy that $\rho_k\leq Ck^{-\alpha}$, $k=1,2,\cdots,n$ and the corresponding eigenvectors form an orthonormal basis respect to the inner product.
\end{assumption}

\begin{theorem}\label{Th-sto-1}
Let Assumption \ref{ass-1} be fulfilled, and $\hat{p^*}$ be the unique solution of 
\begin{align}\label{Q-mini-1}
\min\limits_{\hat{p}\in R^n} \|Q\hat{p}-q^e\| + \lambda\|\hat{p}\|^2.
\end{align}
Then there exists constants $\lambda_0>0$ and $C>0$ such that for any $\lambda\leq\lambda_0$,
\begin{align*}
\mathbb{E}\big[\|Q\hat{p^*}-Q\bar{p^*}\|^2\big]\leq C\lambda\|\bar{p^*}\|^2+C\frac{\sigma^2+\sum_{i=1}^n\epsilon_i^2}{\lambda^{1/\alpha}},
\end{align*}
where $q=Q\bar{p^*}+\epsilon$, $\Gamma\bar{p}=\bar{p^*}$.
\end{theorem}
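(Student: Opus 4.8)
The plan is to reduce everything to the normal equations and a clean bias--variance split, and then to diagonalize in the eigenbasis supplied by Assumption \ref{ass-1}. Since the Tikhonov functional $\|Q\hat{p}-q^e\|^2+\lambda\|\hat{p}\|^2$ is strictly convex, its unique minimizer in \eqref{Q-mini-1} satisfies $(Q^TQ+\lambda I)\hat{p^*}=Q^Tq^e$, so $\hat{p^*}=(Q^TQ+\lambda I)^{-1}Q^Tq^e$. Writing $M=Q^TQ$ and inserting $q^e=Q\bar{p^*}+\epsilon+e$, I would use the identity $Q(M+\lambda I)^{-1}M=Q-\lambda Q(M+\lambda I)^{-1}$ to obtain the exact decomposition
\[
Q\hat{p^*}-Q\bar{p^*}=-\lambda Q(M+\lambda I)^{-1}\bar{p^*}+Q(M+\lambda I)^{-1}Q^T\epsilon+Q(M+\lambda I)^{-1}Q^Te.
\]
The first two summands are deterministic and the last is linear in the zero-mean noise $e$; taking expectations and using $\mathbb{E}[e]=0$ kills the cross terms, so $\mathbb{E}[\|Q\hat{p^*}-Q\bar{p^*}\|^2]$ equals the squared norm of the deterministic (bias) part plus $\mathbb{E}[\|Q(M+\lambda I)^{-1}Q^Te\|^2]$ (variance). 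I would then bound the bias part by twice the sum of the squared norms of its two pieces.

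Next I would diagonalize. By Assumption \ref{ass-1} the eigenpairs $(\rho_k,\varphi_k)$ of $M=Q^TQ$ form an orthonormal basis, equivalently an SVD $Q=\sum_k\sqrt{\rho_k}\,u_k\varphi_k^T$ with $\{u_k\}$ orthonormal. In this basis $Q(M+\lambda I)^{-1}=\sum_k\frac{\sqrt{\rho_k}}{\rho_k+\lambda}u_k\varphi_k^T$ and $Q(M+\lambda I)^{-1}Q^T=\sum_k\frac{\rho_k}{\rho_k+\lambda}u_ku_k^T$, so every term becomes an explicit spectral sum: the regularization bias is $\lambda^2\sum_k\frac{\rho_k}{(\rho_k+\lambda)^2}\langle\varphi_k,\bar{p^*}\rangle^2$, the model-error bias is governed by the operator $\sum_k\frac{\rho_k}{\rho_k+\lambda}u_ku_k^T$, and, since $e$ is i.i.d.\ with $\mathbb{E}[ee^T]=\sigma^2I$, the variance is exactly $\sigma^2\,\mathrm{tr}\big((Q(M+\lambda I)^{-1}Q^T)^2\big)=\sigma^2\sum_k\frac{\rho_k^2}{(\rho_k+\lambda)^2}$.

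The two easy estimates come first. For the regularization bias, the elementary inequality $(\rho_k+\lambda)^2\ge 4\lambda\rho_k$ gives $\frac{\lambda^2\rho_k}{(\rho_k+\lambda)^2}\le\frac{\lambda}{4}$ for each $k$, so this term is at most $\frac14\lambda\sum_k\langle\varphi_k,\bar{p^*}\rangle^2=\frac14\lambda\|\bar{p^*}\|^2$, which is of the required form $C\lambda\|\bar{p^*}\|^2$. For the model-error bias, $\frac{\rho_k}{\rho_k+\lambda}\le1$ shows the operator has norm at most one, so that term is at most $\|\epsilon\|^2=\sum_i\epsilon_i^2$; choosing $\lambda_0\le1$ forces $\lambda^{1/\alpha}\le1$, whence $\|\epsilon\|^2\le\lambda^{-1/\alpha}\sum_i\epsilon_i^2$, matching the claimed form.

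The substantive step, and the main obstacle, is bounding the variance sum $\sum_k\frac{\rho_k^2}{(\rho_k+\lambda)^2}$ by $C\lambda^{-1/\alpha}$; this is the only place where the decay rate $\rho_k\le Ck^{-\alpha}$ is genuinely used. I would split the sum at the index $k^\ast\sim\lambda^{-1/\alpha}$ where $\rho_{k^\ast}\approx\lambda$: for $k\le k^\ast$ I bound each summand crudely by $1$, contributing $O(k^\ast)=O(\lambda^{-1/\alpha})$; for $k>k^\ast$ I use $\frac{\rho_k}{\rho_k+\lambda}\le\rho_k/\lambda\le Ck^{-\alpha}/\lambda$, so the tail is $\le\frac{C^2}{\lambda^2}\sum_{k>k^\ast}k^{-2\alpha}$, and an integral comparison (valid for $\alpha>\tfrac12$, which I would record as the effective constraint on the decay exponent) gives $\frac{C^2}{\lambda^2}(k^\ast)^{1-2\alpha}\sim\lambda^{-1/\alpha}$ as well. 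Hence $\sigma^2\sum_k\frac{\rho_k^2}{(\rho_k+\lambda)^2}\le C\sigma^2\lambda^{-1/\alpha}$. Collecting the regularization bias, the model-error bias and this variance estimate, and merging the $\sigma^2$ and $\sum_i\epsilon_i^2$ contributions over the common factor $\lambda^{-1/\alpha}$, yields the stated inequality. I expect the care in the tail-sum/integral comparison --- and in pinning down how $\lambda_0$ and the decay constant $C$ interact so that the crossover estimate is uniform for $\lambda\le\lambda_0$ --- to be the only delicate point; everything else is bookkeeping.
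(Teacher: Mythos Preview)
Your proposal is correct but takes a different route from the paper. The paper argues variationally: from the Euler equation $(Q\hat{p^*},Qu)+\lambda(\hat{p^*},u)=(q^e,Qu)$ it introduces the energy norm $\|u\|_\lambda^2=\|Qu\|^2+\lambda\|u\|^2$, tests with $u=\hat{p^*}-\bar{p^*}$ to obtain $\|\hat{p^*}-\bar{p^*}\|_\lambda\le\lambda^{1/2}\|\bar{p^*}\|+\sup_u|(\epsilon+e,Qu)|/\|u\|_\lambda$, and then controls the supremum by Cauchy--Schwarz in the eigenbasis (with the nonstandard normalization $(\varphi_k,\varphi_k)=1/\rho_k$), treating $\epsilon$ and $e$ jointly; the resulting spectral sum is $\sum_k(1+\lambda\rho_k^{-1})^{-1}=\sum_k\rho_k/(\rho_k+\lambda)$, bounded by the integral comparison $\int(1+s^\alpha)^{-1}\,ds$, which implicitly needs $\alpha>1$. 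Your explicit bias--variance split via the normal equations and SVD is more transparent, cleanly separates the deterministic model error $\epsilon$ from the random noise $e$, and produces the pointwise smaller variance sum $\sum_k\rho_k^2/(\rho_k+\lambda)^2$, so your split-at-$k^\ast$ argument only requires $\alpha>\tfrac12$. Both routes are standard in regularization theory; yours gives slightly sharper constants and a weaker decay hypothesis, while the paper's energy-norm approach is closer in spirit to classical Tikhonov stability arguments.
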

\begin{proof} 
By deriving the necessary condition of the quadratic minimization (\ref{Q-mini-1}), we can readily see that the unique minimizer $\hat{p^*}$ satisfies the
variational equation
\begin{align}\label{Sto-err-1}
(Q\hat{p^*},Qu)+\lambda(\hat{p^*},u)=(q^e,Qu),~~\forall u\in R^n.
\end{align}
For any $u\in R^n$, we introduce $\| u \|_{\lambda}^2:=\|Qu\|^2+\lambda\|u\|^2$. By taking $u=\hat{p^*}-\bar{p^*}$ in (\ref{Sto-err-1}), one has
\begin{align}\label{Sto-err-3}
\| \hat{p^*}-\bar{p^*}\|_{\lambda}\leq \lambda^{1/2}\|\bar{p^*}\|+\sup\limits_{u\in R^n}\frac{|(\epsilon+e,Qu)|}{\| u \|_{\lambda}}.
\end{align}

Let $G=\sqrt{Q^TQ}$, $G$ is a positive definite matrix. Denote its eigenvalues system $(\hat{\rho_k},\varphi_k)$, where $\{\varphi_k\}_{k=1}^n$ is an orthonormal basis.
For $\forall u\in R^n$, $u=\sum_{k=1}^n(u,\varphi_k)\varphi_k$, $Qu=\sum_{k=1}^n(u,\varphi_k)Q\varphi_k$. It gives that
\begin{align}\label{Sto-err-4}
(u,u)=\sum_{k=1}^n(u,\varphi_k)^2(\varphi_k,\varphi_k).
\end{align}
\begin{align}\label{Sto-err-5}
(Qu,Qu)
&=\bigg(\sum_{k=1}^n(u,\varphi_k)Q\varphi_k,\sum_{k=1}^n(u,\varphi_k)Q\varphi_k\bigg)\\ \nonumber
&=\sum_{k=1}^n(u,\varphi_k)^2(G\varphi_k,G\varphi_k)\\ \nonumber
&=\sum_{k=1}^n(u,\varphi_k)^2\hat{\rho_k}^2(\varphi_k,\varphi_k),
\end{align}
where we use $(Q\varphi_i,Q\varphi_j)=(G\varphi_i,G\varphi_j)=0$, $i\neq j$. Denote 
$u_k=(u,\varphi_k)$, $\rho_k=\hat{\rho_k}^2$. Let $(\varphi_k,\varphi_k)=1/\rho_k$, $k=1,\cdots,n$. We get $(u,u)=\sum_{k=1}^n\rho_k^{-1}u_k^2$, $(Qu,Qu)=\sum_{k=1}^nu_k^2$.
From (\ref{Sto-err-4}) and (\ref{Sto-err-5}), $\| u\|_{\lambda}^2=\sum_{k=1}^n(\lambda\rho_k^{-1}+1)u_k^2$.

By the Cauchy-Schwarz inequality, it holds that
\begin{align*}
(\epsilon+e,Qu)^2
&=\bigg(\sum_{i=1}^n(e_i+\epsilon_i)\bigg(\sum_{k=1}^nu_k(Q\varphi_k)_i\bigg)\bigg)^2\\
&=\bigg(\sum_{k=1}^nu_k\bigg(\sum_{i=1}^n(e_i+\epsilon_i)(Q\varphi_k)_i\bigg)\bigg)^2\\
&\leq \sum_{k=1}^n(1+\lambda\rho_k^{-1})u_k^2\sum_{k=1}^n(1+\lambda\rho_k^{-1})^{-1}\bigg(\sum_{i=1}^n(e_i+\epsilon_i)(Q\varphi_k)_i\bigg)^2\\
&\leq \| u\|_{\lambda}^2\sum_{k=1}^n(1+\lambda\rho_k^{-1})^{-1}\bigg(\sum_{i=1}^n(e_i+\epsilon_i)(Q\varphi_k)_i\bigg)^2,
\end{align*}
that is
\begin{align*}
\frac{(\epsilon+e,Qu)^2}{\| u\|_{\lambda}^2}
\leq \sum_{k=1}^n(1+\lambda\rho_k^{-1})^{-1}\bigg(\sum_{i=1}^n(e_i+\epsilon_i)(Q\varphi_k)_i\bigg)^2.
\end{align*}
For $\big(\sum_{i=1}^n(e_i+\epsilon_i)(Q\varphi_k)_i\big)^2$, we have
\begin{align*}
\bigg(\sum_{i=1}^n(e_i+\epsilon_i)(Q\varphi_k)_i\bigg)^2
&=\bigg(\sum_{i=1}^ne_i(Q\varphi_k)_i+\sum_{i=1}^n\epsilon_i(Q\varphi_k)_i\bigg)^2\\
&\leq 2\bigg(\sum_{i=1}^ne_i(Q\varphi_k)_i\bigg)^2+2\bigg(\sum_{i=1}^n\epsilon_i(Q\varphi_k)_i\bigg)^2\\
&\leq 2\bigg(\sum_{i=1}^ne_i(Q\varphi_k)_i\bigg)^2+2\sum_{i=1}^n\epsilon_i^2\sum_{i=1}^n(Q\varphi_k)_i^2\\
&\leq 2\bigg(\sum_{i=1}^ne_i(Q\varphi_k)_i\bigg)^2+2\sum_{i=1}^n\epsilon_i^2,
\end{align*}
where $\sum_{i=1}^n(Q\varphi_k)_i^2=(Q\varphi_k,Q\varphi_k)=1$.
Then, we get
\begin{align*}
\mathbb{E}\bigg[\bigg(\sum_{i=1}^n(e_i+\epsilon_i)(Q\varphi_k)_i\bigg)^2\bigg]
&\leq 2\mathbb{E}\bigg[\bigg(\sum_{i=1}^ne_i(Q\varphi_k)_i\bigg)^2\bigg]+2\sum_{i=1}^n\epsilon_i^2\\&
\leq 2\sum_{i=1}^n\mathbb{E}[e_i^2](Q\varphi_k)_i^2+2\sum_{i=1}^n\epsilon_i^2\\
&\leq 2\sigma^2\sum_{i=1}^n(Q\varphi_k)_i^2+2\sum_{i=1}^n\epsilon_i^2\\
&\leq 2\sigma^2+2\sum_{i=1}^n\epsilon_i^2,
\end{align*}
the last estimate holds from the fact random variables $\{e_i\}_{i=1}^n$ are independent and identically distributed, i.e., $\mathbb{E}[e_ie_j]=0$, $i\neq j$. Hence, one has
\begin{align*}
\mathbb{E}\bigg[\frac{(\epsilon+e,Qu)^2}{\| u\|_{\lambda}^2}\bigg]
\leq 2\sum_{k=1}^n(1+\lambda\rho_k^{-1})^{-1}\bigg(\sigma^2+\sum_{i=1}^n\epsilon_i^2\bigg).
\end{align*}
By Assumption \ref{ass-1}, it is easy to verify that
\begin{align*}
\sum_{k=1}^n(1+\lambda\rho_k^{-1})^{-1}
&\leq \sum_{k=1}^n(1+\lambda k^{\alpha})^{-1}\\
&\leq \int_1^{\infty}(1+\lambda t^{\alpha})^{-1}dt\\
&\leq \lambda^{-1/\alpha}\int_{\lambda^{1/\alpha}}^{\infty}(1+ s^{\alpha})^{-1}ds\\
&\leq C\lambda^{-1/\alpha}.
\end{align*}
It arrvies 
\begin{align}\label{Sto-err-7}
\mathbb{E}\bigg[\frac{(\epsilon+e,Qu)^2}{\| u\|_{\lambda}^2}\bigg]
\leq C\lambda^{-1/\alpha}\bigg(\sigma^2+\sum_{i=1}^n\epsilon_i^2\bigg).
\end{align}
From (\ref{Sto-err-3}) and (\ref{Sto-err-7}), we have
\begin{align*}
\mathbb{E}\big[\| \hat{p^*}-\bar{p^*}\|_{\lambda}^2\big]\leq C\lambda\|\bar{p^*}\|^2+C\lambda^{-1/\alpha}\bigg(\sigma^2+\sum_{i=1}^n\epsilon_i^2\bigg).
\end{align*}
The proof is complete. 
\end{proof} 

\begin{remark}
By Theorem \ref{Th-sto-1}, we have
\begin{align*}
\mathbb{E}\big[\|Ap^*-A\bar{p}\|^2\big]\leq C\lambda\|\Gamma\bar{p}\|^2+C\frac{\sigma^2+\sum_{i=1}^n\epsilon_i^2}{\lambda^{1/\alpha}}.
\end{align*}
$A_{m\times n}$ is design matrix in the fitting problems. In general, the number of measurement data $m$ exceeds the number of control points $n$. $\Gamma_{n\times n}$ is a positive definite matrix in the term of regularization. For all $q\in R^d$, denote $(q,q)_d=\frac{1}{d}\|q\|^2:=\|q\|_d^2$. We have the following mean squared error:
\begin{align*}
\mathbb{E}\big[\|Ap^*-A\bar{p}\|_m^2\big]\leq C\frac{n\lambda}{m}\|\Gamma\bar{p}\|_n^2+C\frac{\sigma^2+\sum_{i=1}^n\epsilon_i^2}{m\lambda^{1/\alpha}}.
\end{align*}
When $\sum_{i=1}^n\epsilon_i^2<\sigma^2$, an optimal choice of the parameter is such that $\lambda^{1+1/\alpha}=O(\sigma^2n^{-1})\|\Gamma\bar{p}\|_n^{-2}$.
\end{remark}

\section{Self-consistent iterative algorithms without prior information}\label{section6}
In the previous section, we gave an estimate of the optimal regularization parameter. However, in practical applications, the noise level $\sigma^2$ and the control points $\bar{p}$ are unknown. Therefore, we further propose an algorithm to find the optimal regularization $\lambda$. A direct idea is to find $\lambda$ by using a fixed-point iteration method. 

\begin{table}[htbp]
\small
\centering
\caption{Optimal regularization parameter iterative algorithm for regularized RPIA curve fitting.}
\begin{tabular}{|p{0.96\linewidth}|}
\hline
\textbf{An algorithm for finding optimal regularization parameter and recovering the control points.} \\
\hline
\textbf{1. Input:} Observation data $q_\mathrm{noise}$, number of control points $n$, number of data points $m$, convergence threshold $\varepsilon_\lambda$, spectral decay parameter $\alpha$. \\
\textbf{2.} Set up initial guess $\lambda_1$ such that $\lambda_1^{1+1/\alpha} = n^{-1}$. \\
\textbf{3.} Solve for control points $p^{(1)}$ by minimizing \\
\hspace*{3em} $\min\limits_{p} \|A p - q_\mathrm{noise}\|_F^2 + \lambda_1 \|\Gamma p\|_F^2$. \\
\textbf{4.} Update parameter: \\
\hspace*{3em} $\lambda_2^{1+\frac{1}{\alpha}} = \dfrac{\|A p^{(1)} - q_\mathrm{noise}\|_m^2}{\|\Gamma p^{(1)}\|_n^2} n^{-1}$. \\
\textbf{5.} Set $k = 2$. \\
\textbf{6.} \textbf{while} $|\lambda_k - \lambda_{k-1}| > \varepsilon_\lambda \lambda_{k-1}$ \textbf{do} \\
\hspace*{2em} 6.1 Solve for $p^{(k)}$ by minimizing \\
\hspace*{3em} $\min\limits_{p} \|A p - q_\mathrm{noise}\|_F^2 + \lambda_k \|\Gamma p\|_F^2$. \\
\hspace*{2em} 6.2 Update parameter: \\
\hspace*{3em} $\lambda_{k+1}^{1+\frac{1}{\alpha}} = \dfrac{\|A p^{(k)} - q_\mathrm{noise}\|_m^2}{\|\Gamma p^{(k)}\|_n^2} n^{-1}$ \\
\hspace*{2em} 6.3 $k \leftarrow k + 1$. \\
\textbf{7.} \textbf{end while} \\
\textbf{8. Output:} regularization parameter $\lambda$,  control points $p$. \\
\hline
\end{tabular}
\end{table}

\begin{table}[htbp]
\small
\centering
\caption{Optimal regularization parameter iterative algorithm for regularized RPIA  surface  fitting.}
\begin{tabular}{|p{0.96\linewidth}|}
\hline
\textbf{An algorithm for finding optimal regularization parameter and recovering the control points.} \\
\hline
\textbf{1. Input:} Observation data $Q_\mathrm{noise}$, number of control points $n$, number of data points $m$, convergence threshold $\varepsilon_\lambda$, spectral decay parameter $\alpha$. \\
\textbf{2.} Set up initial guess $\lambda_1$ such that $\lambda_1^{1+1/\alpha} = n^{-1}$. \\
\textbf{3.} Solve for control points $P^{(1)}$ by minimizing \\
\hspace*{3em} $\min\limits_{P} \|A PB^T - Q_\mathrm{noise}\|_F^2 + \lambda_1\left\|{A P} {L}_v^T\right\|_F^2+\lambda_1\left\|{L}_{{u}} {P} {B}^T\right\|_F^2+\lambda_1^2\left\|{L}_{{u}} {P} {L}_{{v}}^T\right\|_F^2$. \\
\textbf{4.} Update parameter: \\
\hspace*{3em} $\lambda_2^{1+\frac{1}{\alpha}} = \dfrac{\|A P^{(1)}B^T - Q_\mathrm{noise}\|_m^2}{\left\|{A P^{(1)}} {L}_v^T\right\|_n^2+\left\|{L}_{{u}} {P}^{(1)} {B}^T\right\|_n^2} n^{-1}$. \\
\textbf{5.} Set $k = 2$. \\
\textbf{6.} \textbf{while} $|\lambda_k - \lambda_{k-1}| > \varepsilon_\lambda \lambda_{k-1}$ \textbf{do} \\
\hspace*{2em} 6.1 Solve for $P^{(k)}$ by minimizing \\
\hspace*{3em} $\min\limits_{P} \|A PB^T - Q_\mathrm{noise}\|_F^2 + \lambda_k\left\|{A P} {L}_v^T\right\|_F^2+\lambda_k\left\|{L}_{{u}} {P} {B}^T\right\|_F^2+\lambda_k^2\left\|{L}_{{u}} {P} {L}_{{v}}^T\right\|_F^2$. \\
\hspace*{2em} 6.2 Update parameter: \\
\hspace*{3em} $\lambda_{k+1}^{1+\frac{1}{\alpha}} = \dfrac{\|A P^{(k)}B^T - Q_\mathrm{noise}\|_m^2}{\left\|{A P^{(k)}} {L}_v^T\right\|_n^2+\left\|{L}_{{u}} {P}^{(k)} {B}^T\right\|_n^2} n^{-1}$. \\
\hspace*{2em} 6.3 $k \leftarrow k + 1$. \\
\textbf{7.} \textbf{end while} \\
\textbf{8. Output:} regularization parameter $\lambda$, control points $P$. \\
\hline
\end{tabular}
\end{table}
\section{Numerical experiment}
In this section, we give several numerical examples and implement the regularized RPIA algorithm to fit curves and surfaces, and further verify the correctness of the stochastic optimal estimate for the regularization method. We use the cubic B-spline basis because it is simple and widely used in computer-aided design\cite{DENG201432,EBRAHIMI20191,HUANG2020101931,ZHU2024113436}.

\subsection{The case of curves}\label{section7.1}

For consistency, we adhere to the implementation framework outlined by Liu and Wu\cite{WU2024128669} when performing curve fitting, specifically structured as follows. We determine the parameters $\left\{x_j\right\}_{j=0}^m$ corresponding to the points $\left\{q_j\right\}_{j=0}^m$ using the normalized accumulated chord length parametrization, computed as:

$$
x_0=0, x_m=1, \text { and } x_j=x_{j-1}+\frac{\left\|\boldsymbol{q}_j-\boldsymbol{q}_{j-1}\right\|}{\sum_{s=1}^m\left\|\boldsymbol{q}_s-\boldsymbol{q}_{s-1}\right\|},
$$
for $j=1,2, \cdots, m$. The knot vector of cubic B-spline basis is defined by
$$
\left\{0,0,0,0, \bar{x}_4, \bar{x}_5, \cdots, \bar{x}_{n_1}, 1,1,1,1\right\},
$$
with
$$
\bar{x}_{j+3}=(1-\alpha) x_{i-1}+\alpha x_i,
$$
where $i=\lfloor j d\rfloor, ~\alpha=j d-i, ~d=(m+1) /\left(n_1-2\right)$ for $j=1,2, \cdots, n_1-3$. Here, the notation $\lfloor\cdot\rfloor$ represents the greatest integer less than or equal to the given value. For $i=1,2, \cdots, n_1-1$, the initial control points are selected by $p_i^{(0)}=q_{\left\lfloor m i / n_1\right\rfloor}$ for $i \in\left[n_1\right]$. 

Next, we give the following settings: the numbers of data and control points are $m=1000$ and $n_1=100$, the block size is 5. The algorithm terminates when:
$$
\frac{\left\|A p^{(k)}-A p^{(k-1)}\right\|_F}{\|Ap^{(k-1)}\|_F}<10^{-8},
$$
or the maximum number of iterations (8000) is reached. Define the noise data ${\boldsymbol{q^\delta}}$, which is generated from the exact data $\boldsymbol{q}$ as
$$
\boldsymbol{q^\delta}=\boldsymbol{q}+10 \frac{\widetilde{\boldsymbol{q}}}{\|\widetilde{\boldsymbol{q}}\|_F},
$$
with the random variable $\widetilde{\boldsymbol{q}}$ following an i.i.d. standard Gaussian distribution.
$\Gamma$ denotes the second-order difference matrix with Dirichlet boundary conditions, which is explicitly given by:
\[
\Gamma = 
C\begin{pmatrix}
-2 & 1 & 0 & 0 & \cdots & 0 & 0 \\
1 & -2 & 1 & 0 & \cdots & 0 & 0 \\
0 & 1 & -2 & 1 & \cdots & 0 & 0 \\
\vdots & \ddots & \ddots & \ddots & \ddots & \vdots & \vdots \\
0 & 0 & \cdots & 1 & -2 & 1 & 0 \\
0 & 0 & \cdots & 0 & 1 & -2 & 1 \\
0 & 0 & \cdots & 0 & 0 & 1 & -2
\end{pmatrix}_{(n_1+1) \times (n_1+1),}
\]
where $C$ is a positive constant. Taking $C=1600$ in Examples \ref{example 7.1} and \ref{example 7.2}.
\begin{example}
\label{example 7.1}
 m+1 data points sampled uniformly from a rose-type curve, whose polar coordinate equation is
$$\boldsymbol{r}=\sin (\theta / 4) \quad(0 \leq \theta \leq 8 \pi).$$
\end{example}

\begin{example}
\label{example 7.2}
 m+1 data points sampled uniformly from a blob-shaped curve, whose polar coordinate equation is
$$\boldsymbol{r}=1 + 2\cos (2\theta +1/2)+2\cos(3\theta+1/2) \quad(0 \leq \theta \leq 2 \pi).$$
\end{example}

\begin{figure}[htbp]
    \centering
    \begin{subfigure}[t]{0.50\textwidth}
        \centering
        \includegraphics[width=\linewidth]{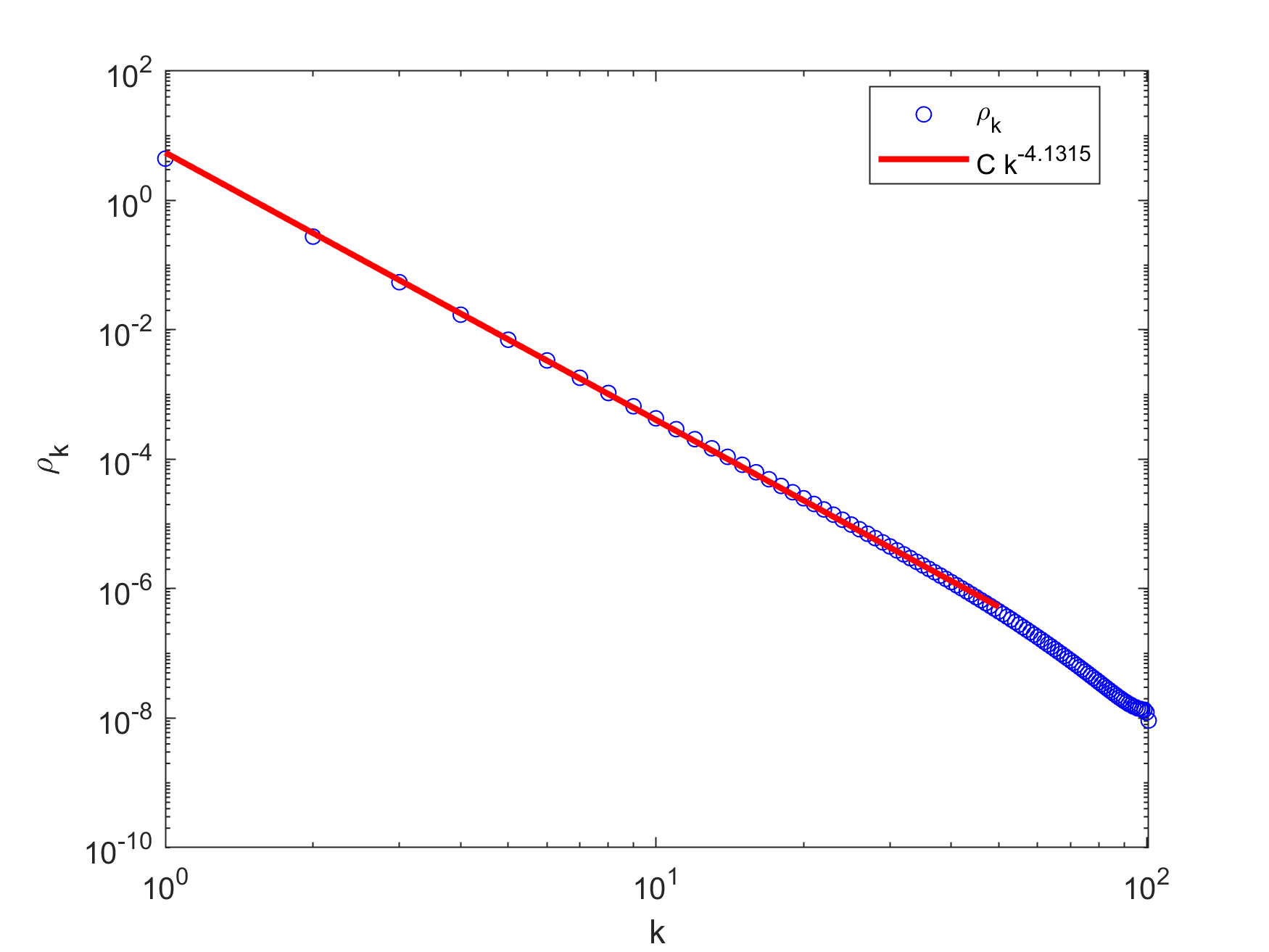}
    \end{subfigure}
    \caption{Spectrum of $Q^TQ$ for Example \ref{example 7.1}}
    \label{Fig:2}
\end{figure}

\begin{figure}[htbp]
    \centering
    % 第一行左
    \begin{subfigure}[t]{0.48\textwidth}
        \centering
        \includegraphics[width=\linewidth]{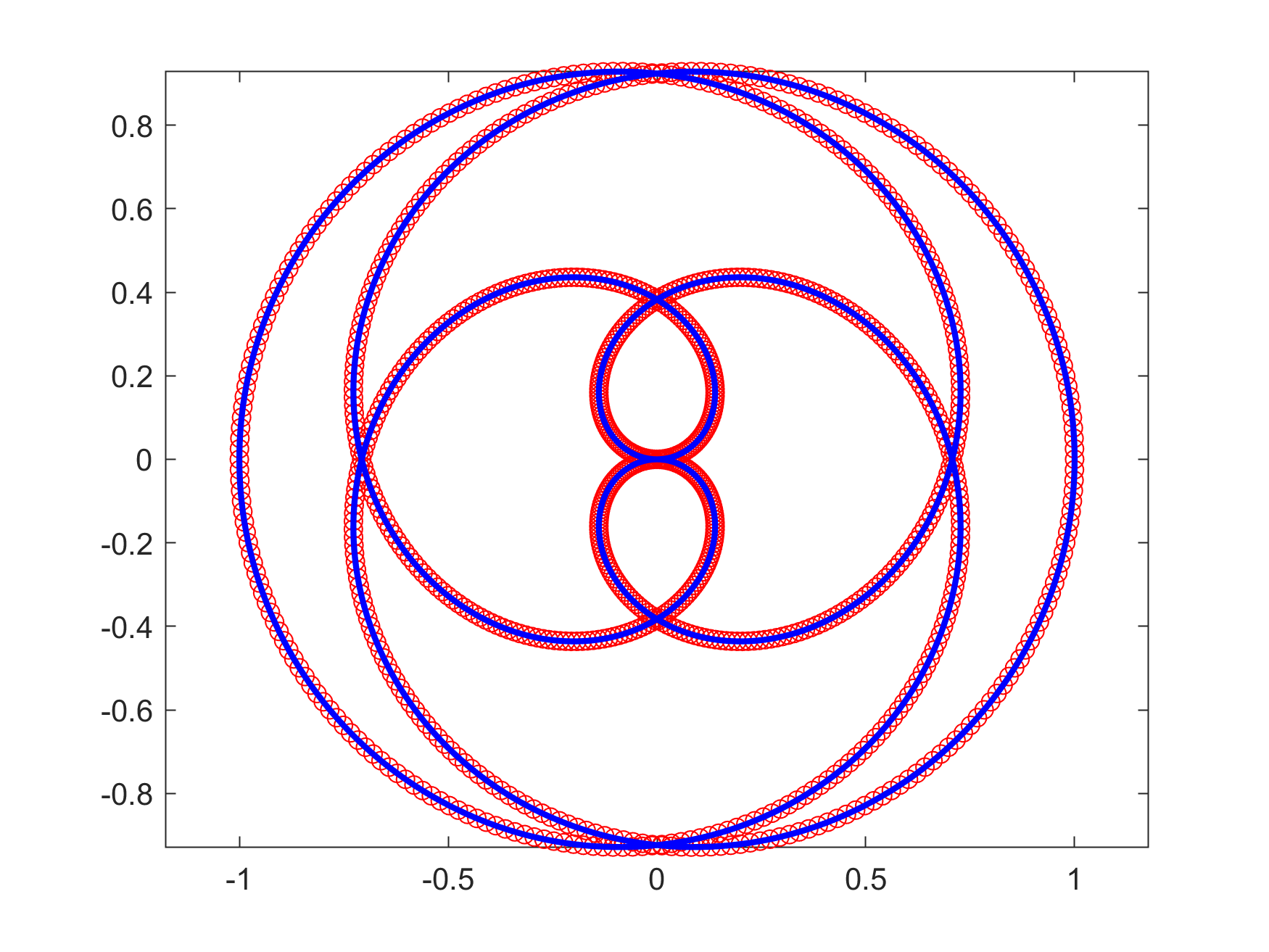}
        \caption{Example \ref{example 7.1} Real Curve}
    \end{subfigure}
    \hfill
    % 第一行右
    \begin{subfigure}[t]{0.48\textwidth}
        \centering
        \includegraphics[width=\linewidth]{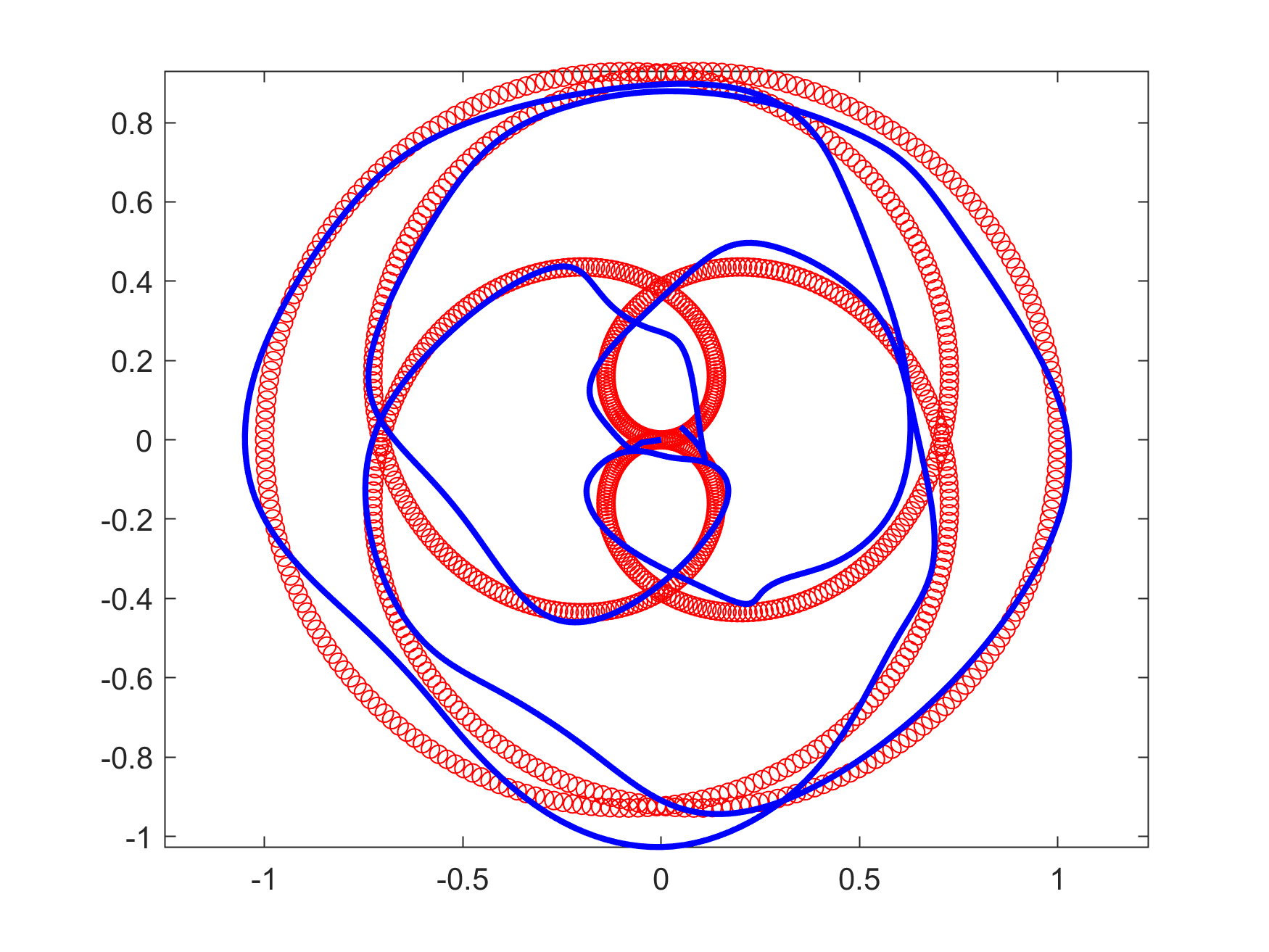}
        \caption{Regularized RPIA Fitted Curve}
    \end{subfigure}
    \vskip\baselineskip % 或者 \\
    % 第二行左
    \begin{subfigure}[t]{0.48\textwidth}
        \centering
        \includegraphics[width=\linewidth]{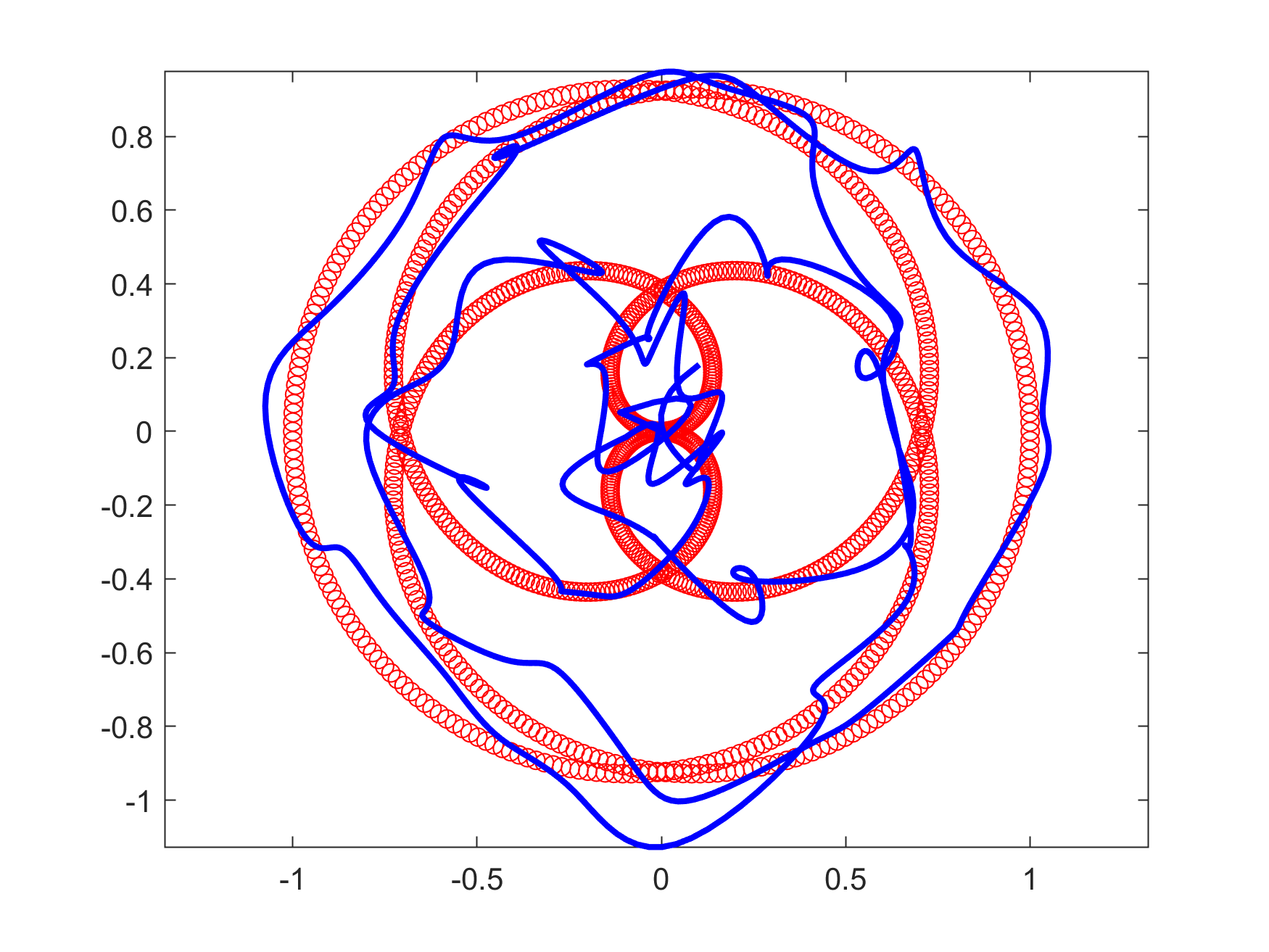}
        \caption{Regularized RPIA Fitted Curve($\lambda=0$)}
    \end{subfigure}
    \hfill
    % 第二行右
    \begin{subfigure}[t]{0.48\textwidth}
        \centering
        \includegraphics[width=\linewidth]{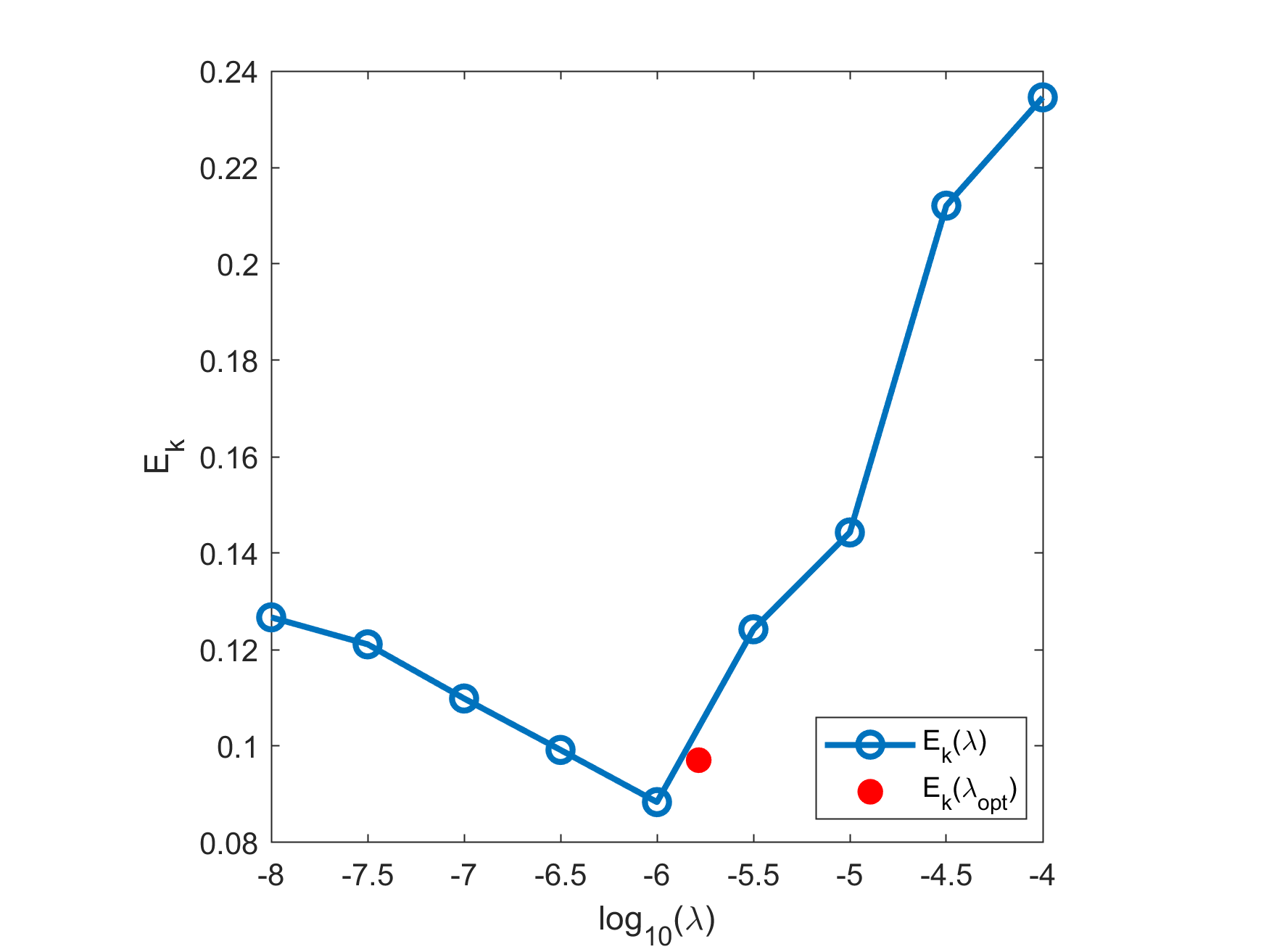}
        \caption{Fitting Error}
    \end{subfigure}
    \caption{The fitting curves given by Regularized RPIA for Example \ref{example 7.1}}
    \label{Fig:1}
\end{figure}

From the results in Figure \ref{Fig:1}, we can see that the regularization effect is very obvious. Specifically, the blue lines are the fitting curves and the red circles are the noise-free data points. Figure (a) represents the real curve, which perfectly fits the noise-free data points; Figure (b) represents the fitting curve with the optimal parameter ($\lambda$=1.646e-06) given by our optimal parameter estimation method in Section \ref{section5}, the error $E_k = 0.097077$; Figure (c) represents the curve without regularization ($\lambda=0$), and the error $E_k = 0.137592$; Figure (d) represents the errors as $\lambda$ changes, and the red dot denotes the estimated optimal point, which is very close to the point with the minimum error. This further verifies the correctness of the optimal parameter estimation. The definition of the fitting error is as follows.
$$
E_k(\lambda)=\frac{\left\|Ap^{*}-A\bar{p}\right\|_F^2}{\|A\bar{p}\|_F^2}.
$$

Figure \ref{Fig:2} shows the decay of the eigenvalue of matrix $Q$ in Section \ref{section5}(Select the first 50 eigenvalues), with a decay rate of 4.1315, i.e. $\alpha=4.1315$ in $\lambda^{1+1/\alpha}=O(\sigma^2n^{-1})\|\Gamma\bar{p}\|_n^{-2}$.
The above results are obtained by running the regularized RPIA 10 times in a row and taking the arithmetic mean of the results.

\begin{figure}[htbp]
    \centering
    \begin{subfigure}[t]{0.50\textwidth}
        \centering
        \includegraphics[width=\linewidth]{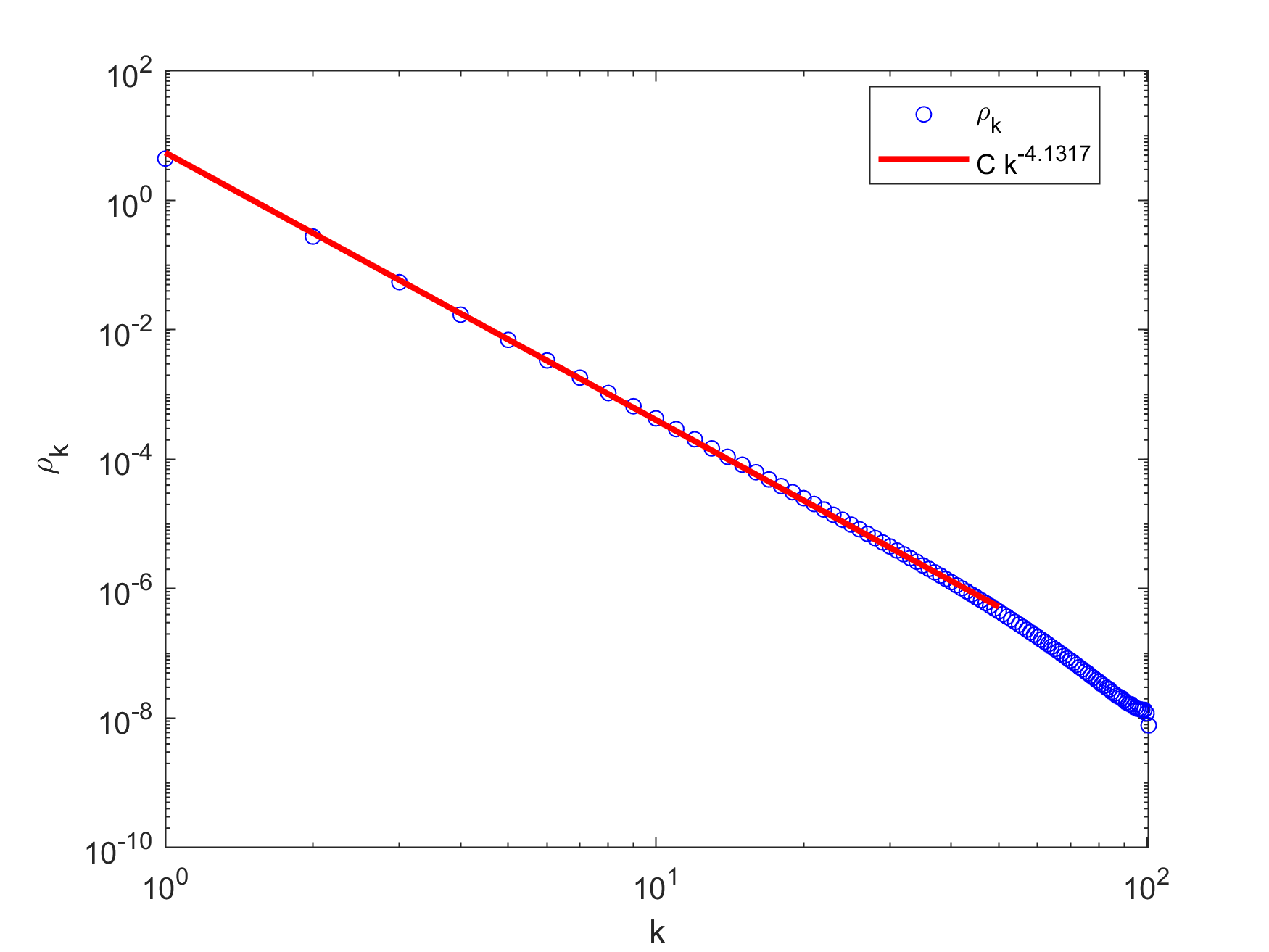}
    \end{subfigure}
    \caption{Spectrum of $Q^TQ$ for Example \ref{example 7.2}}
    \label{Fig:4}
\end{figure}

\begin{figure}[htbp]
    \centering
    % 第一行左
    \begin{subfigure}[t]{0.48\textwidth}
        \centering
        \includegraphics[width=\linewidth]{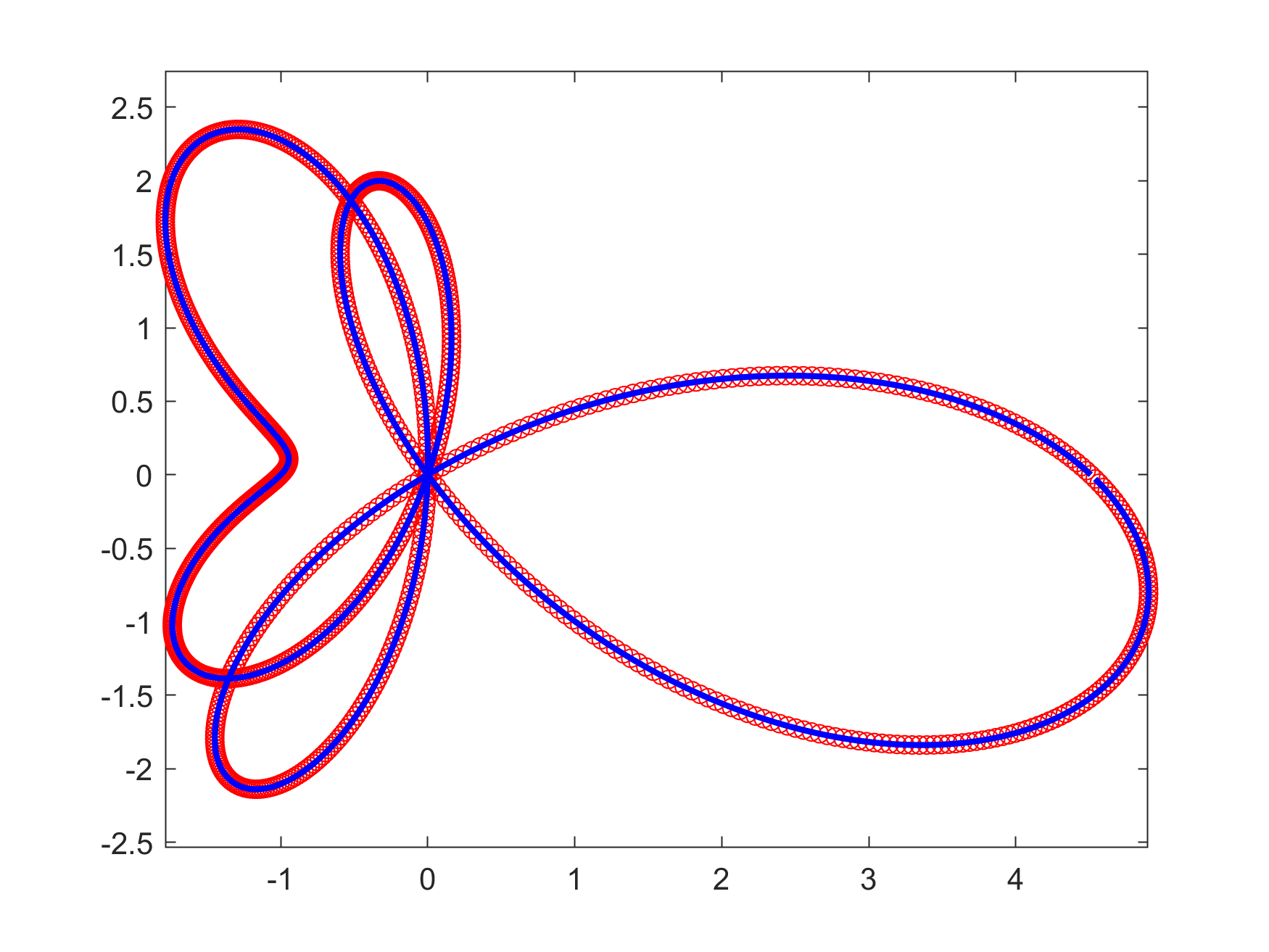}
        \caption{Example \ref{example 7.2} Real Curve}
    \end{subfigure}
    \hfill
    % 第一行右
    \begin{subfigure}[t]{0.48\textwidth}
        \centering
        \includegraphics[width=\linewidth]{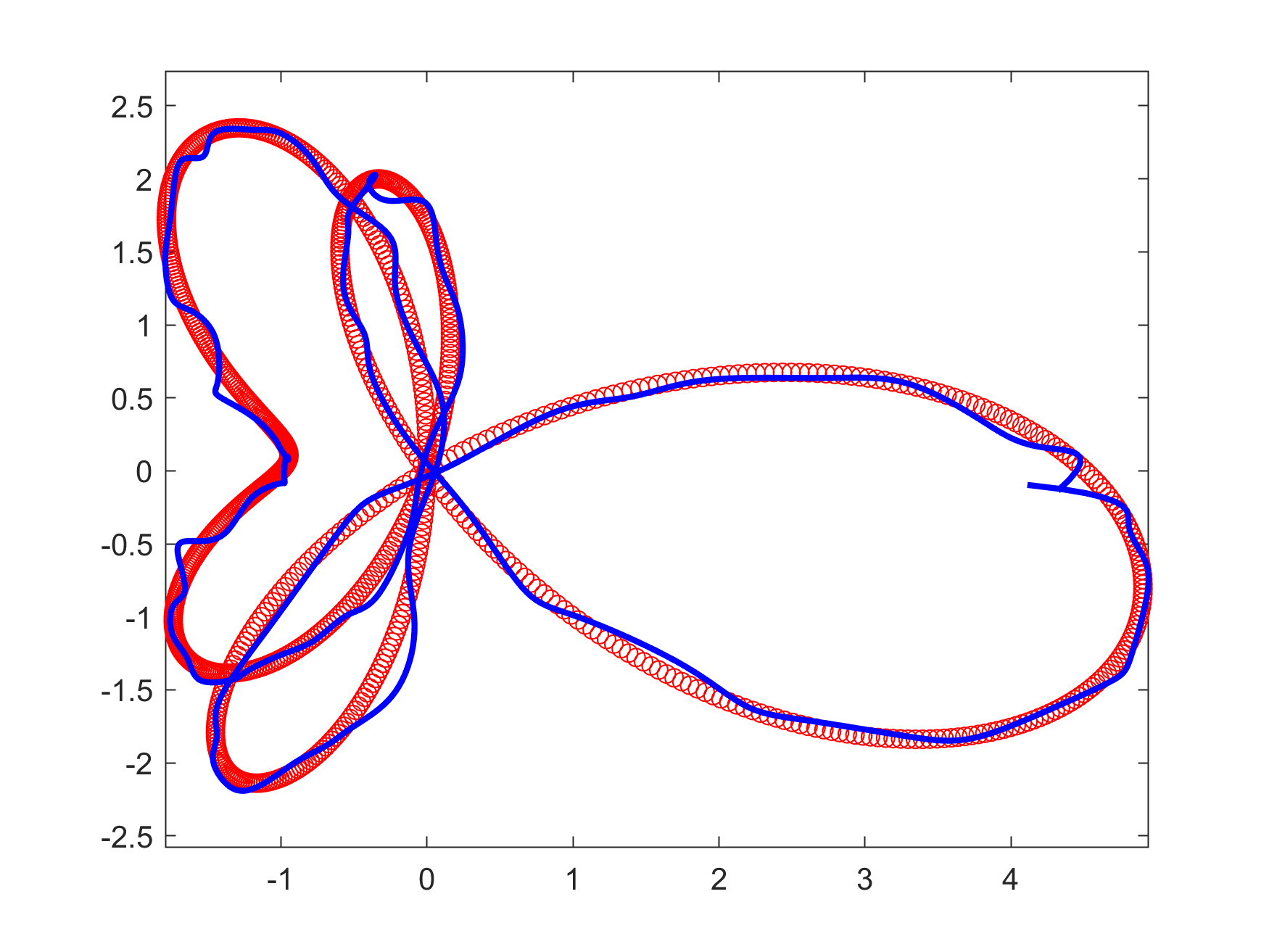}
        \caption{Regularized RPIA Fitted Curve}
    \end{subfigure}
    \vskip\baselineskip % 或者 \\
    % 第二行左
    \begin{subfigure}[t]{0.48\textwidth}
        \centering
        \includegraphics[width=\linewidth]{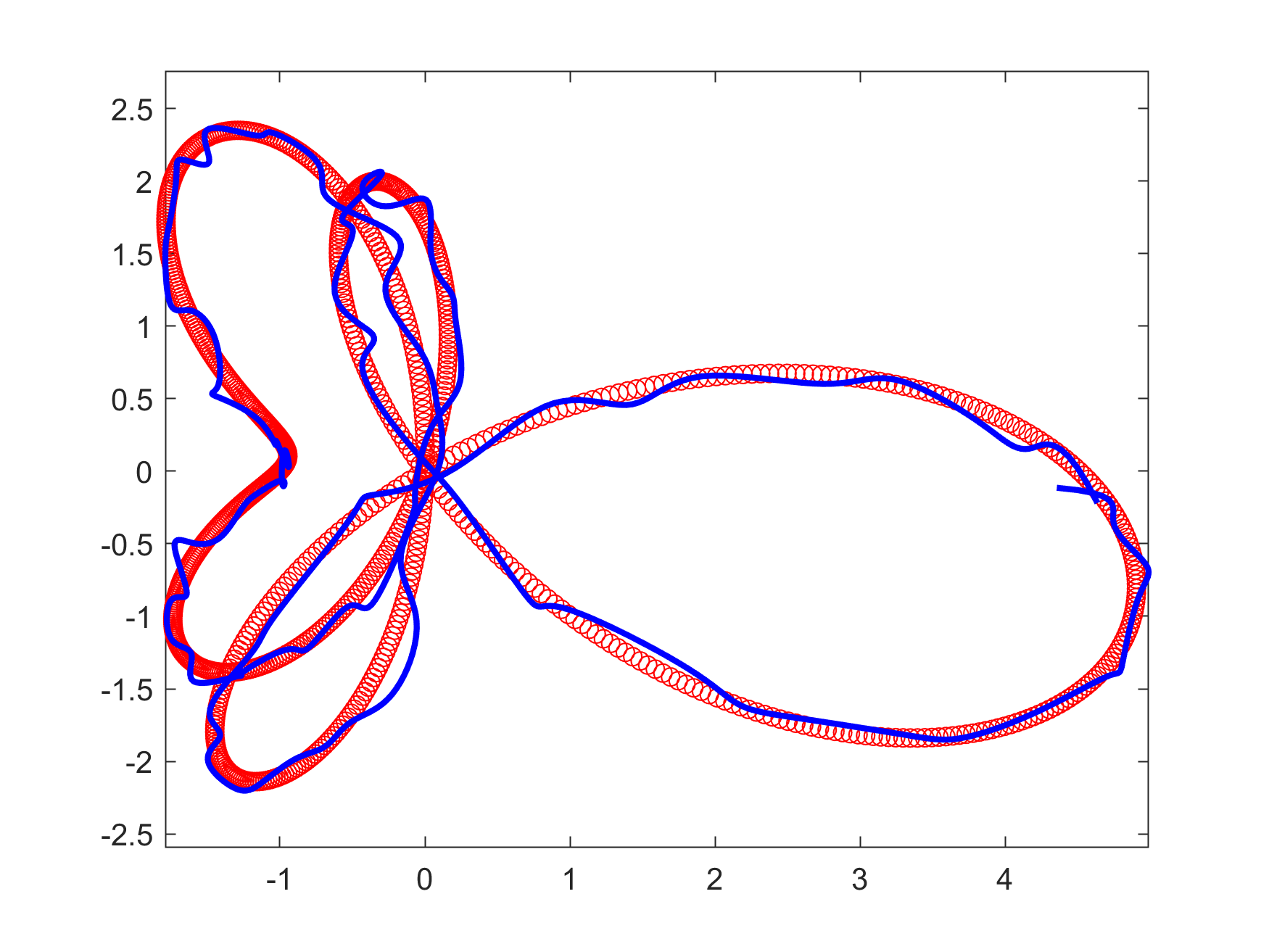}
        \caption{Regularized RPIA Fitted Curve($\lambda=0$)}
    \end{subfigure}
    \hfill
    % 第二行右
    \begin{subfigure}[t]{0.48\textwidth}
        \centering
        \includegraphics[width=\linewidth]{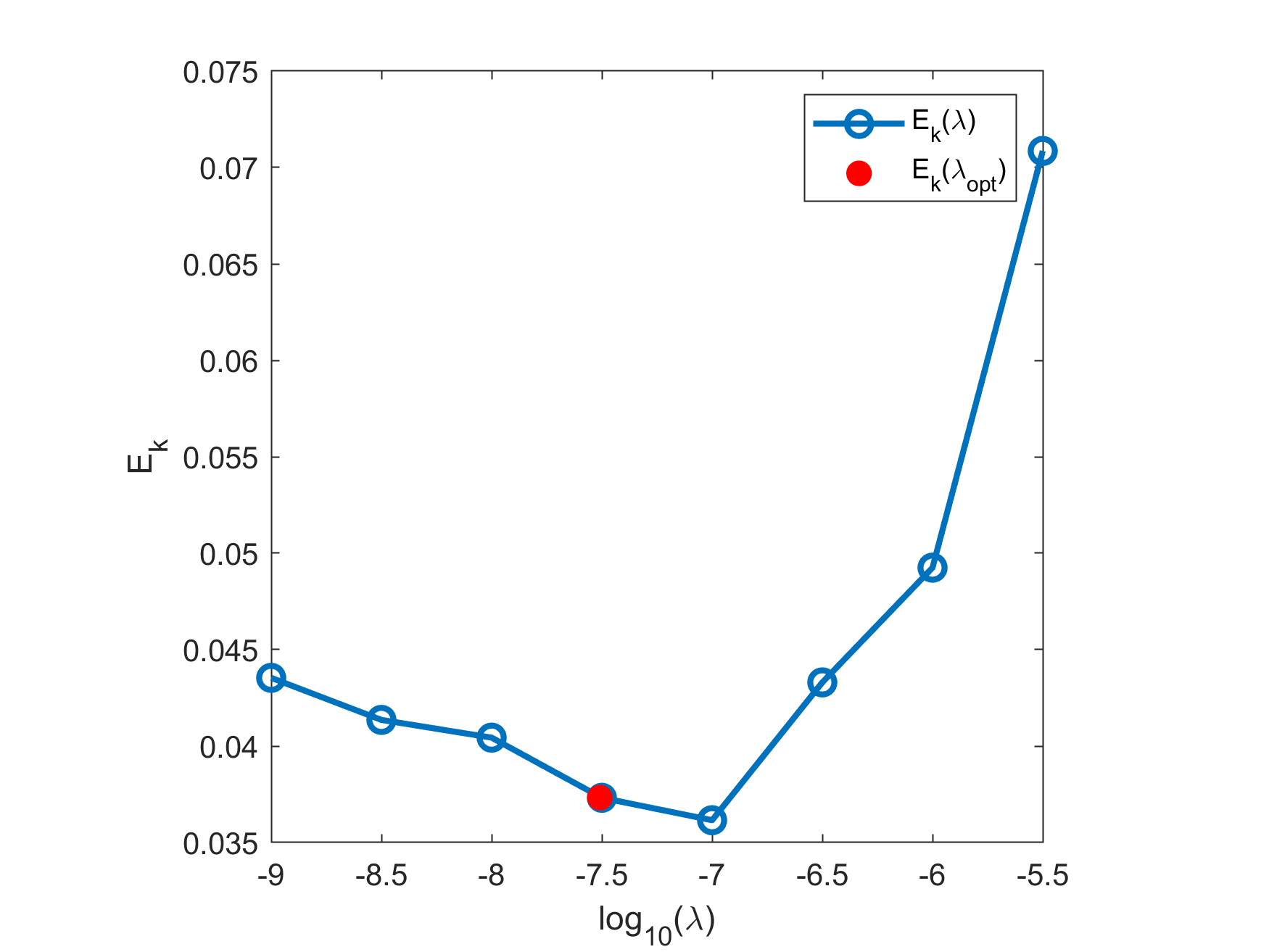}
        \caption{Fitting Error}
    \end{subfigure}    
    \caption{The fitting curves given by Regularized RPIA for Example \ref{example 7.2}}
    \label{Fig:3}
\end{figure}

It can also be seen from Figure \ref{Fig:3} that the regularization method is still better. 
Figure (a) represents the real curve, which perfectly fits the noise-free data points; Figure (b) represents the fitting curve with the optimal parameter ($\lambda$= 3.096e-08) given by our optimal parameter estimation method in Section \ref{section5}, the error $E_k = 0.037331$; Figure (c) represents the curve without regularization ($\lambda=0$), and the error $E_k = 0.044575$; Figure (d) represents the errors as $\lambda$ changes, and the red dot denotes the estimated optimal point, which is very close to the point with the minimum error. 
Figure \ref{Fig:4} shows the decay of the eigenvalue of matrix $Q$ in Section \ref{section5}(Select the first 50 eigenvalues), with a decay rate of 4.1317, i.e. $\alpha=4.1317$ in $\lambda^{1+1/\alpha}=O(\sigma^2n^{-1})\|\Gamma\bar{p}\|_n^{-2}$.
The above results are obtained by running the regularized RPIA 10 times in a row and taking the arithmetic mean of the results.

\subsection{The case of surfaces}\label{section7.2}
Similar to the case of curve fitting, we assign the parameters $\left\{x_h\right\}_{h=0}^m$ and $\left\{y_l\right\}_{l=0}^p$ as $x_0=0, x_m=1$, and
$$
x_h=x_{h-1}+\frac{\sum_{t=0}^p\left\|\boldsymbol{Q}_{h t}-\boldsymbol{Q}_{h-1, t}\right\|}{\sum_{s=1}^m \sum_{t=0}^p\left\|\boldsymbol{Q}_{s t}-\boldsymbol{Q}_{s-1, t}\right\|},
$$
for $h=1,2, \cdots, m$. $y_0=0, y_p=1$, and
$$
y_l=y_{l-1}+\frac{\sum_{s=0}^m\left\|\boldsymbol{Q}_{s l}-\boldsymbol{Q}_{s, l-1}\right\|}{\sum_{h=0}^m \sum_{t=1}^p\left\|\boldsymbol{Q}_{s t}-\boldsymbol{Q}_{s, t-1}\right\|},
$$
for $l=1,2, \cdots, p$, respectively. Define two knot vectors as
$$
\left\{0,0,0,0, \bar{x}_4, \bar{x}_5, \cdots, \bar{x}_{n_1}, 1,1,1,1\right\}
$$
with $\bar{x}_{h+3}=\left(1-\alpha_x\right) x_{i-1}+\alpha_x x_i, i=\left\lfloor h d_x\right\rfloor, \alpha_x=h d_x-i, d_x=(m+1) /\left(n_1-2\right)$ for $h=1,2, \cdots, n_1-3$. And
$$
\left\{0,0,0,0, \bar{y}_4, \bar{y}_5, \cdots, \bar{y}_{n_2}, 1,1,1,1\right\}
$$
with $\bar{y}_{l+3}=\left(1-\alpha_y\right) y_{j-1}+\alpha_y y_j, j=\left\lfloor l d_y\right\rfloor, \alpha_y=l d_y-j, d_y=(p+1) /\left(n_2-2\right)$ for $l=1,2, \cdots, n_2-3$, respectively. The initial control points are selected as $\boldsymbol{P}_{i j}^{(0)}=\boldsymbol{Q}_{\left\lfloor m i / n_1\right\rfloor,\left\lfloor p j / n_2\right\rfloor}$ for $i \in\left[n_1\right]$ and $j \in\left[n_2\right]$. 

Similarly, we give the following settings: the number of data points and control points in the u direction are m=60, $n_1$=20, the number of data points and control points in the v direction are p=60, $n_2$=20, and the sub-block size in the u direction and the sub-block size in the v direction are both set to 5. The algorithm terminates when:
$$
\frac{\left\|A P^{(k+1)} B^T-A P^{(k)} B^T\right\|_F}{\left\|A P^{(k)} B^T\right\|_F}<10^{-8},
$$
or the maximum number of iterations (10000) is reached. Define the noise data ${\boldsymbol{Q^\delta}}$, which is generated from the exact data $\boldsymbol{Q}$ as
$$
{\boldsymbol{Q^\delta}}={\boldsymbol{Q}}+a \frac{\widetilde{\boldsymbol{Q}}}{\|\widetilde{\boldsymbol{Q}}\|_F}~(a=40~or~100),
$$
with the random variable $\widetilde{\boldsymbol{Q}}$ following an i.i.d. standard Gaussian distribution.
$\Gamma_u$ and $\Gamma_v$ are the second-order difference matrices with Dirichlet boundary conditions in the u and v directions, respectively, that is:
\[
\Gamma_u = 
C\begin{pmatrix}
-2 & 1 & 0 & 0 & \cdots & 0 & 0 \\
1 & -2 & 1 & 0 & \cdots & 0 & 0 \\
0 & 1 & -2 & 1 & \cdots & 0 & 0 \\
\vdots & \ddots & \ddots & \ddots & \ddots & \vdots & \vdots \\
0 & 0 & \cdots & 1 & -2 & 1 & 0 \\
0 & 0 & \cdots & 0 & 1 & -2 & 1 \\
0 & 0 & \cdots & 0 & 0 & 1 & -2
\end{pmatrix}_{(n_1+1) \times (n_1+1),}
\]
\[
\Gamma_v = 
C\begin{pmatrix}
-2 & 1 & 0 & 0 & \cdots & 0 & 0 \\
1 & -2 & 1 & 0 & \cdots & 0 & 0 \\
0 & 1 & -2 & 1 & \cdots & 0 & 0 \\
\vdots & \ddots & \ddots & \ddots & \ddots & \vdots & \vdots \\
0 & 0 & \cdots & 1 & -2 & 1 & 0 \\
0 & 0 & \cdots & 0 & 1 & -2 & 1 \\
0 & 0 & \cdots & 0 & 0 & 1 & -2
\end{pmatrix}_{(n_2+1) \times (n_2+1),}
\]
where $C$ is a positive constant. Taking $C=91$ in Example \ref{example 7.3}.

\begin{example}\label{example 7.3}
$(m+1) \times(p+1)$ data points sampled uniformly from a boy surface, whose parametric equation is given by\\
$$
\left\{\begin{array}{l}
\boldsymbol{x}=2 / 3(\cos (t) \cos (2 t)+\sqrt{2} \sin (t) \cos (s)) \cos (t) /(\sqrt{2}-\sin (2 t) \sin (3 s)), \\ 
\boldsymbol{y}=2 / 3(\cos (t) \sin (2 t)-\sqrt{2} \sin (t) \sin (s)) \cos (t) /(\sqrt{2}-\sin (2 t) \sin (3 s)), \\ 
\boldsymbol{z}=\sqrt{2} \cos (t) \cos (t) /(\sqrt{2}-\sin (2 t) \sin (3 s))(-\pi \leq t, s \leq \pi) .\end{array}\right.
$$
\end{example}

\begin{figure}[htbp]
    \centering
    % 第一行左
    \begin{subfigure}[t]{0.48\textwidth}
        \centering
        \includegraphics[width=\linewidth]{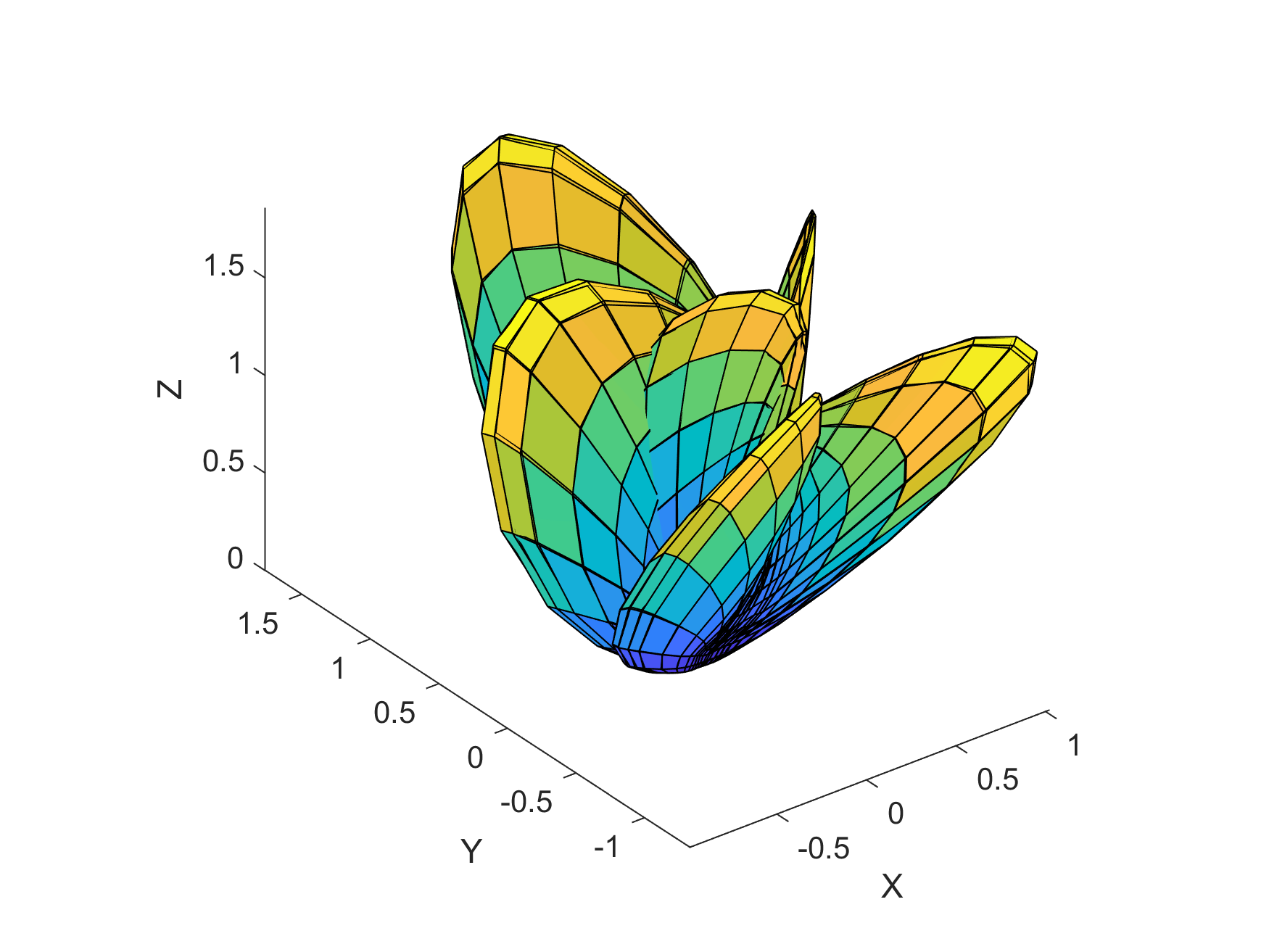}
        \caption{Example \ref{example 7.3} Real Surface}
    \end{subfigure}
    \hfill
    % 第一行右
    \begin{subfigure}[t]{0.48\textwidth}
        \centering
        \includegraphics[width=\linewidth]{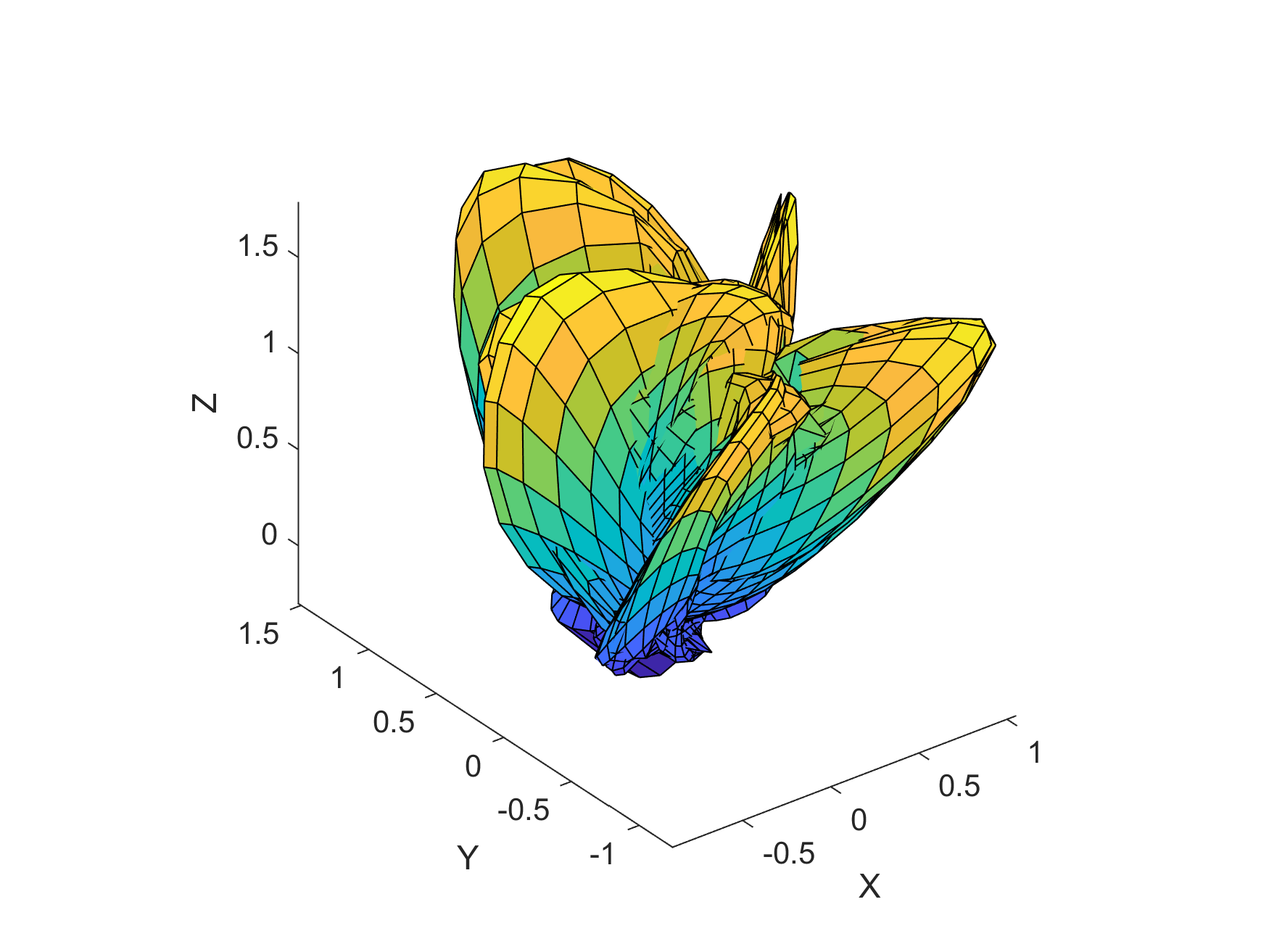}
        \caption{Regularized RPIA Fitted Surface}
    \end{subfigure}
    \vskip\baselineskip % 或者 \\
    % 第二行左
    \begin{subfigure}[t]{0.48\textwidth}
        \centering
        \includegraphics[width=\linewidth]{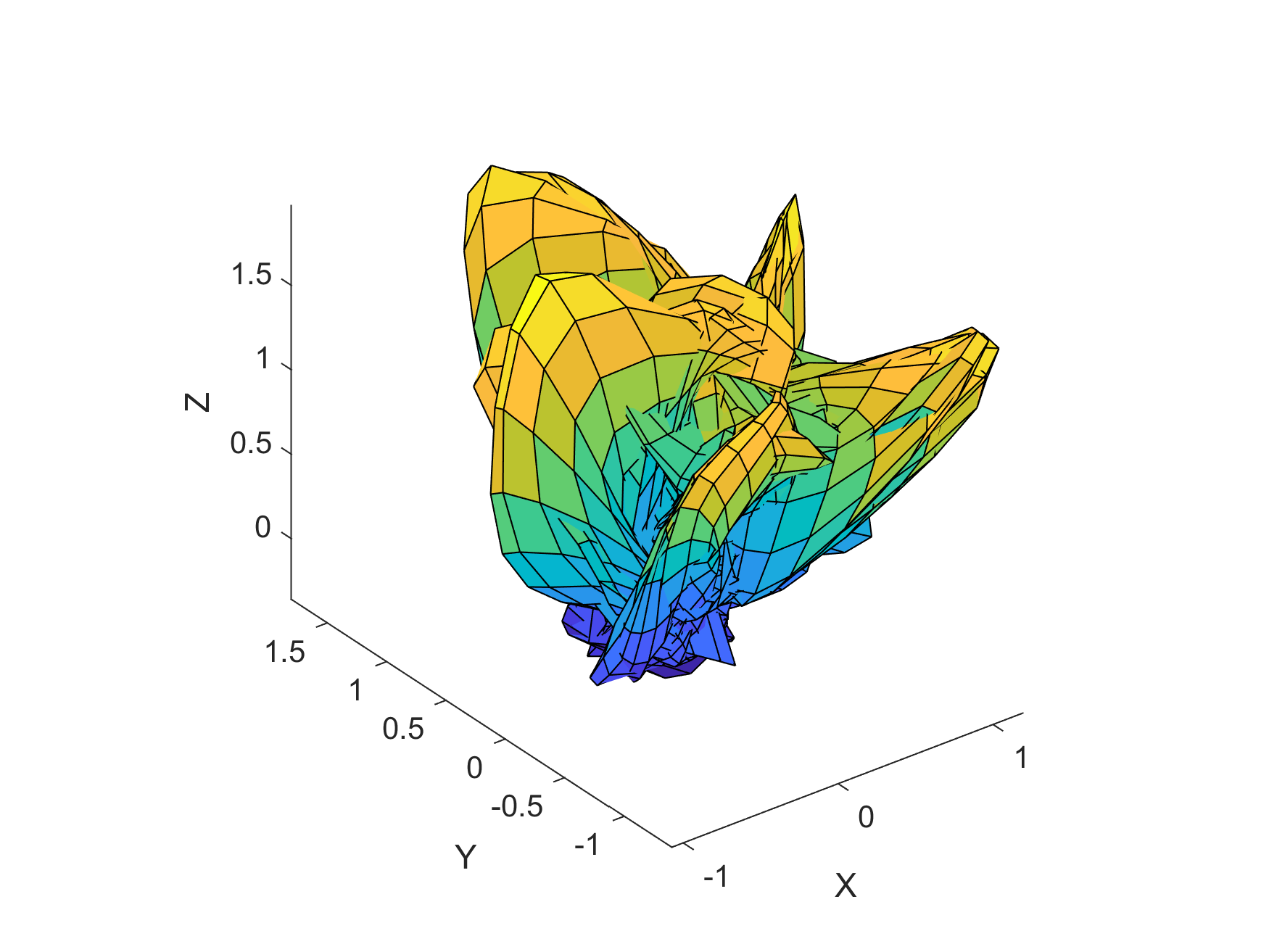}
        \caption{Regularized RPIA Fitted Surface($\lambda=0$)}
    \end{subfigure}
    \hfill
    % 第二行右
    \begin{subfigure}[t]{0.48\textwidth}
        \centering
        \includegraphics[width=\linewidth]{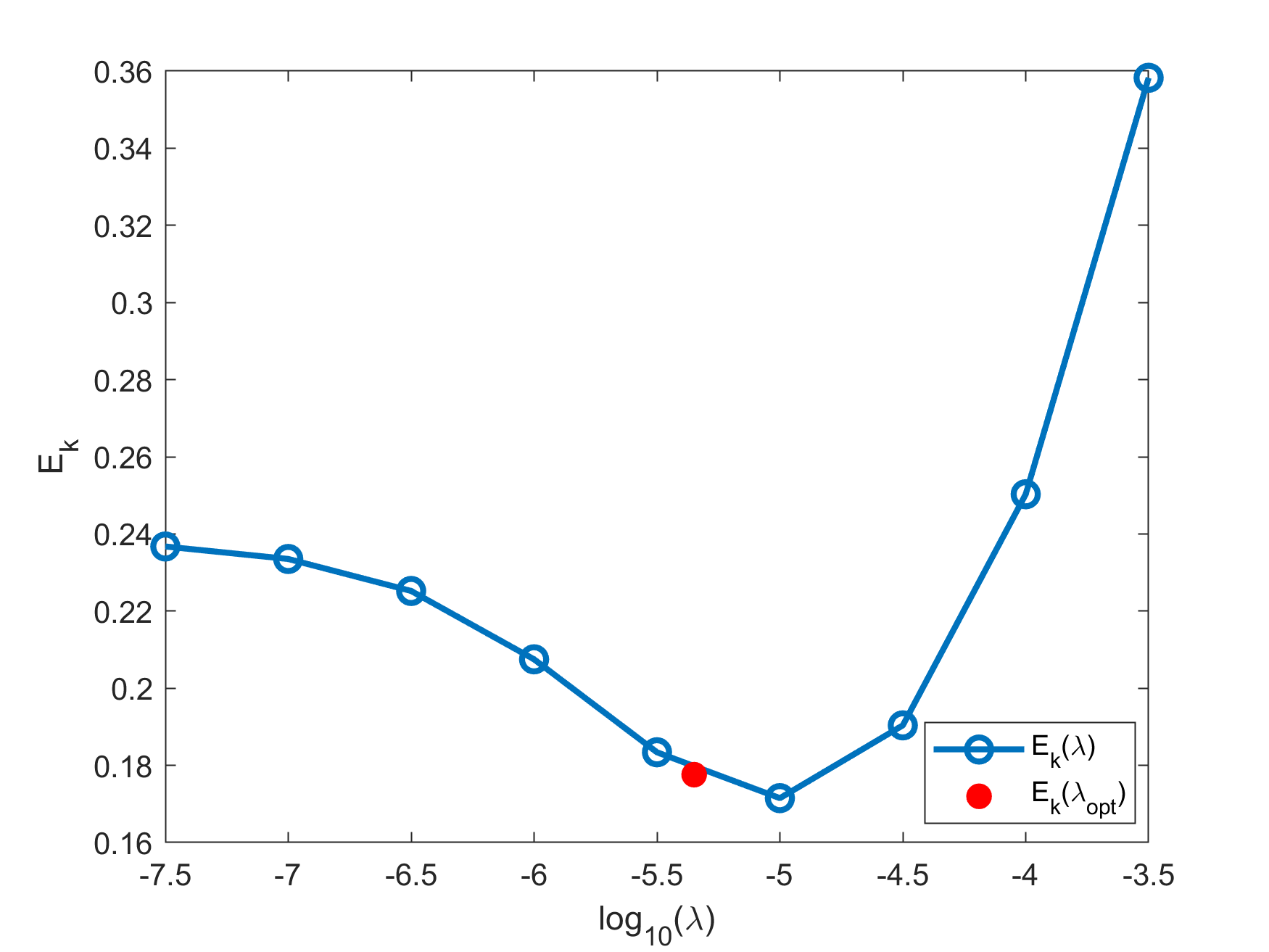}
        \caption{Fitting Error}
    \end{subfigure}
    \caption{The fitting surface given by Regularized RPIA for Example \ref{example 7.3}}
    \label{Fig:5}
\end{figure}

\begin{figure}[htbp]
    \centering
    \begin{subfigure}[t]{0.50\textwidth}
        \centering
        \includegraphics[width=\linewidth]{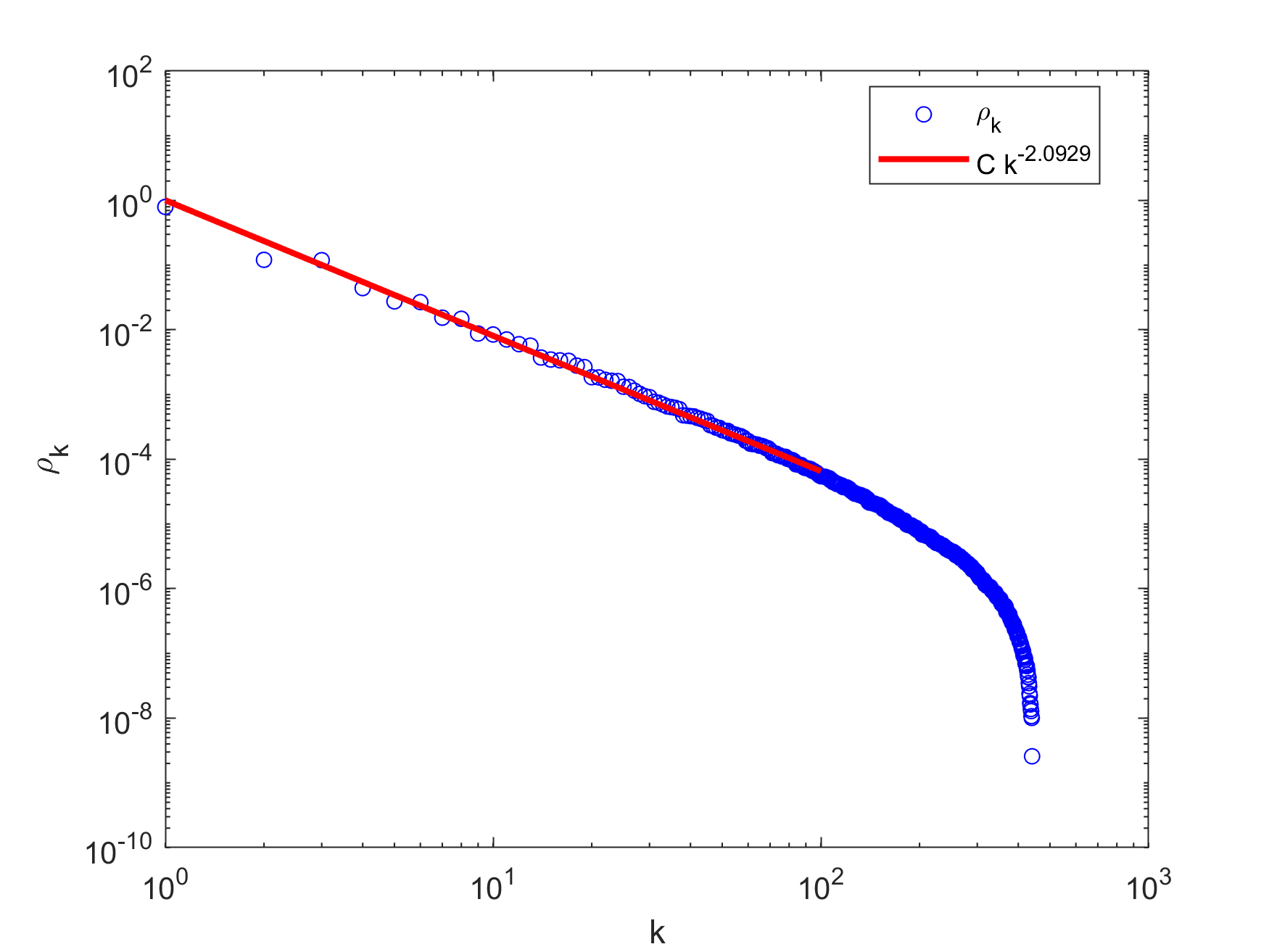}
    \end{subfigure}
    \caption{Spectrum of $Q^TQ$ for Example \ref{example 7.3}}
    \label{Fig:6}
\end{figure}
From the results in Figure \ref{Fig:5}, we can see that the regularization effect is obvious when the noise level  parameter $a$ equals 40. 
Figure (a) represents the real surface, which perfectly fits the noise-free data points; Figure (b) represents the fitting surface with the optimal parameter $\lambda$= 4.480e-06, the error $E_k = 0.177585$; Figure (c) represents the surface without regularization ($\lambda=0$), and the error $E_k = 0.237919$; Figure (d) represents the errors as $\lambda$ changes, and the red dot denotes the estimated optimal point, which is very close to the point with the minimum error. 
This further verifies the correctness of the optimal parameter estimation expression $\lambda^{1+1/\alpha}=O(\sigma^2n^{-1})\|\Gamma\bar{p}\|_n^{-2}$ in Section \ref{section5}. The definition of the fitting error is as follows.
$$
E_k(\lambda)=\frac{\left\|A p^*B^T-A \bar{p}B^T\right\|_F^2}{\|A \bar{p}B^T\|_F^2}.
$$
Figure \ref{Fig:6} shows the decay of the eigenvalue of matrix $Q$ in Section \ref{section5}(Select the first 100 eigenvalues), with a decay rate of 2.0929, i.e. $\alpha=2.0929$ in $\lambda^{1+1/\alpha}=O(\sigma^2n^{-1})\|\Gamma\bar{p}\|_n^{-2}$.

In order to further test the model under high noise level, we set the noise level parameter $a=100$ based on the previous experiment. From Figure \ref{Fig:7}, we can easily see that under high noise level, the fitting surface of Figure (c) without regularization is messy, with an error of 0.579405. But under the optimal regularization parameter $\lambda$ = 1.548e-05, the fitting surface of Figure (b) is significantly improved, with an error of 0.328436. At the same time, it can also be seen from Figure (d) that when the noise level increases, the regularization parameter also increases, which is consistent with the theoretical results. The above results are obtained by running the regularized RPIA 3 times continuously and taking the arithmetic mean of the results.

It is worth noting that in the process of estimating the optimal parameters for the surface fitting problem, we transform the surface optimization problem into the form of the curve mentioned in Section 5. In order to make the right-hand side term $O(\sigma^2n^{-1})\|\Gamma\bar{p}\|_n^{-2}$ of the estimation formula not contain $\lambda$, we discard the high-order term $\lambda^2$ of the surface optimization problem. Numerical experiments also show that $\lambda$ is very small and discarding high-order terms will not affect the numerical results.
\begin{figure}[htbp]
    \centering
    % 第一行左
    \begin{subfigure}[t]{0.48\textwidth}
        \centering
        \includegraphics[width=\linewidth]{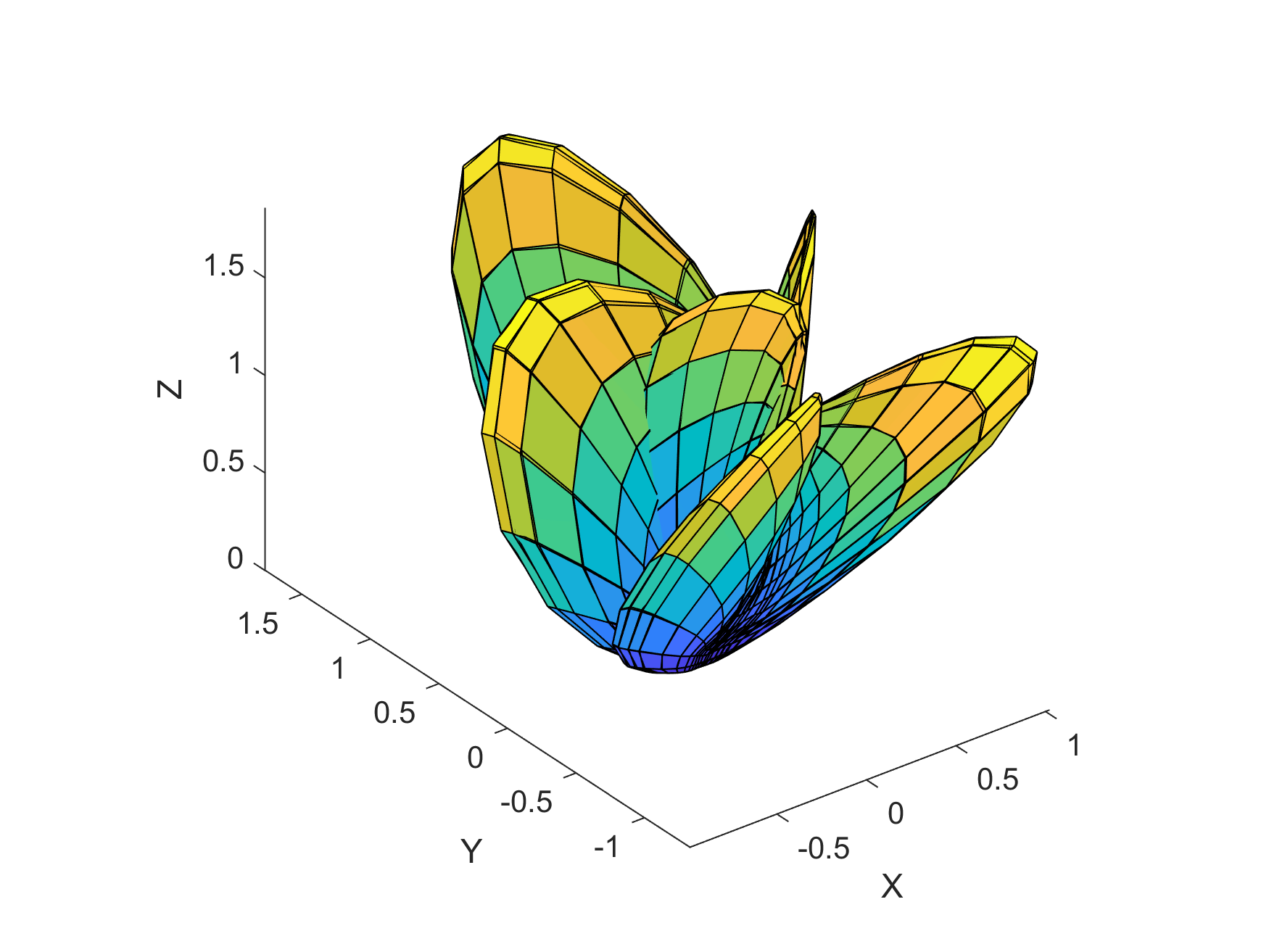}
        \caption{Example \ref{example 7.3} Real Surface}
    \end{subfigure}
    \hfill
    % 第一行右
    \begin{subfigure}[t]{0.48\textwidth}
        \centering
        \includegraphics[width=\linewidth]{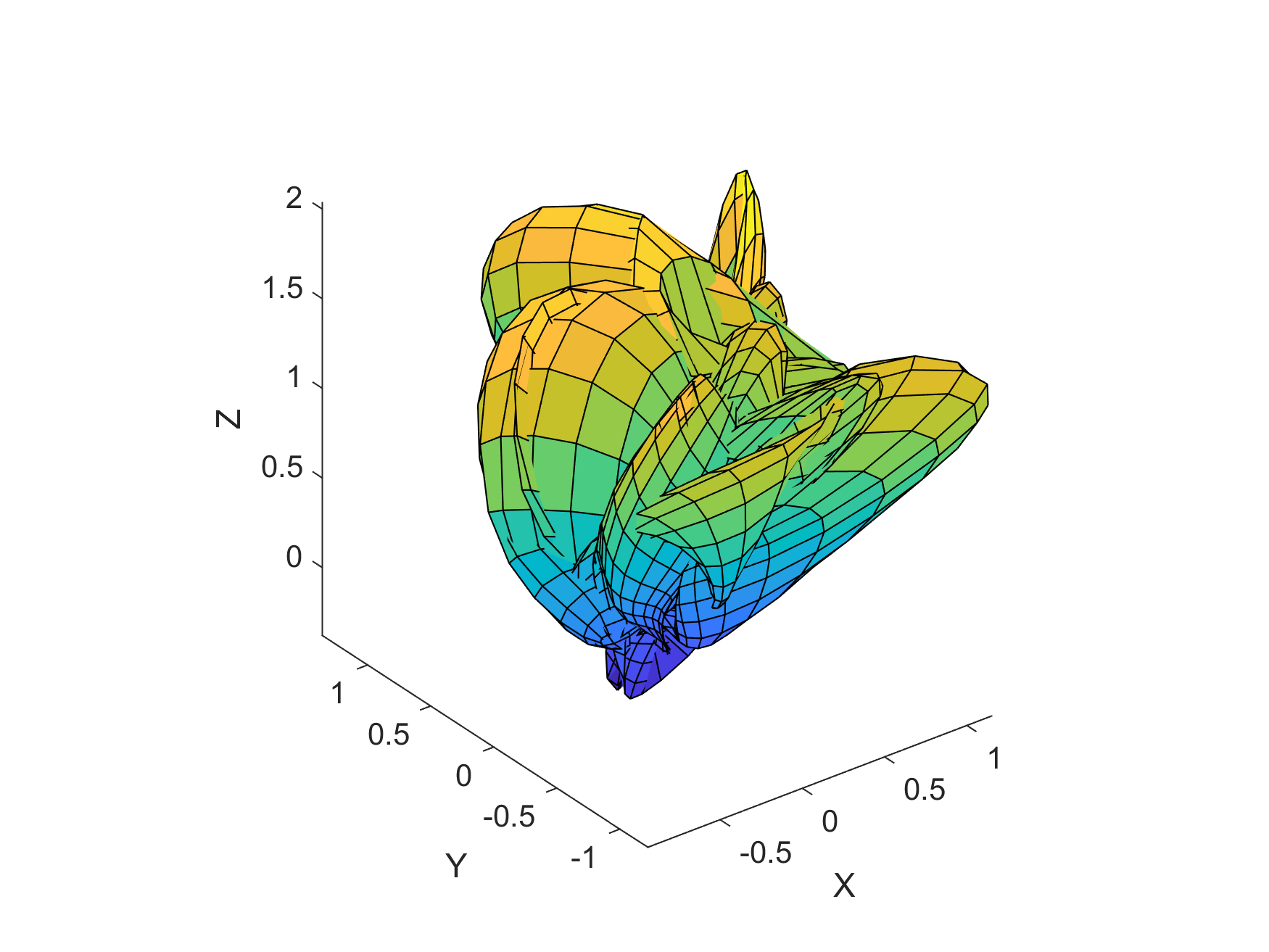}
        \caption{Regularized RPIA Fitted Surface}
    \end{subfigure}
    \vskip\baselineskip % 或者 \\
    % 第二行左
    \begin{subfigure}[t]{0.48\textwidth}
        \centering
        \includegraphics[width=\linewidth]{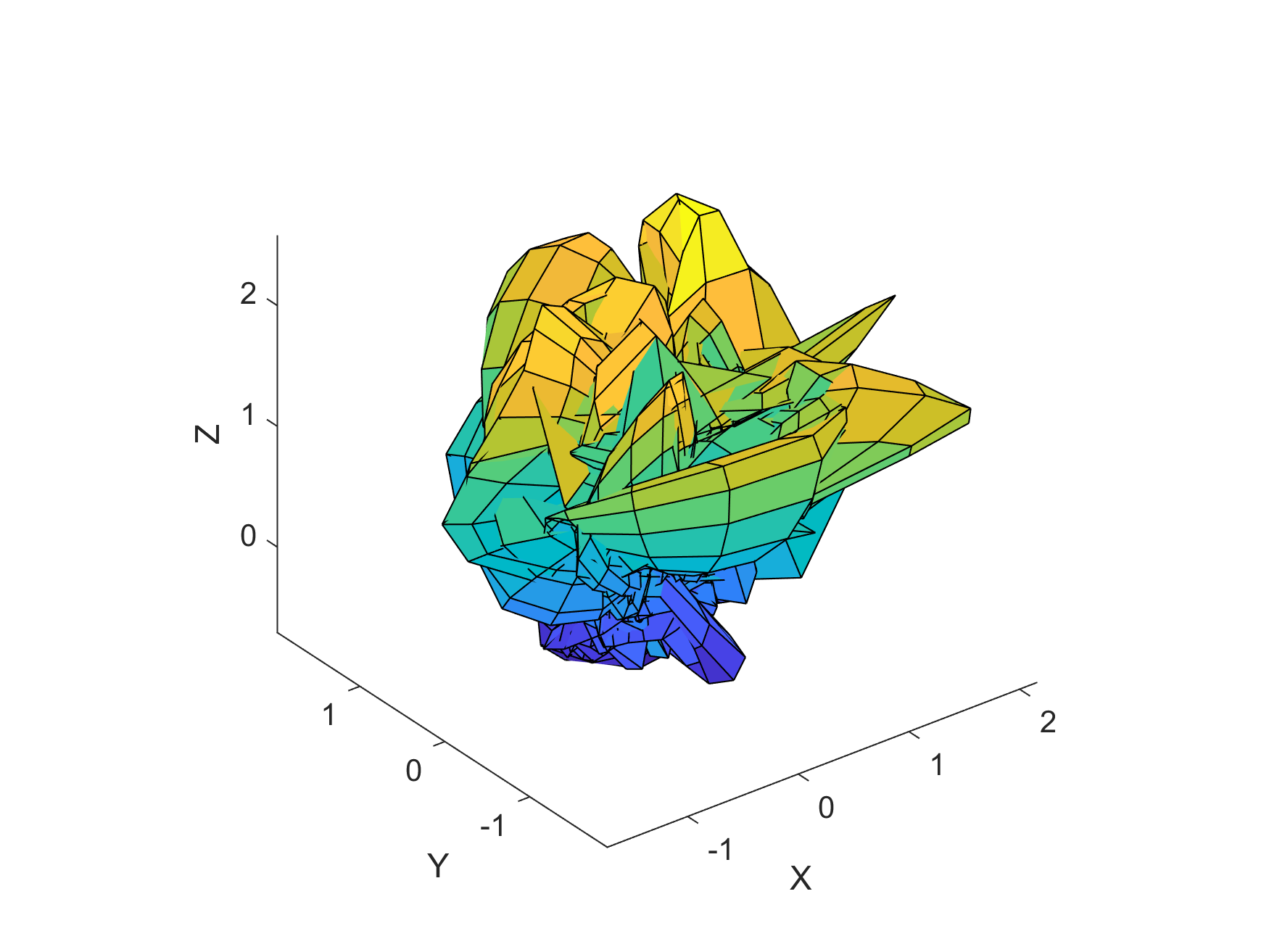}
        \caption{Regularized RPIA Fitted Surface($\lambda=0$)}
    \end{subfigure}
    \hfill
    % 第二行右
    \begin{subfigure}[t]{0.48\textwidth}
        \centering
        \includegraphics[width=\linewidth]{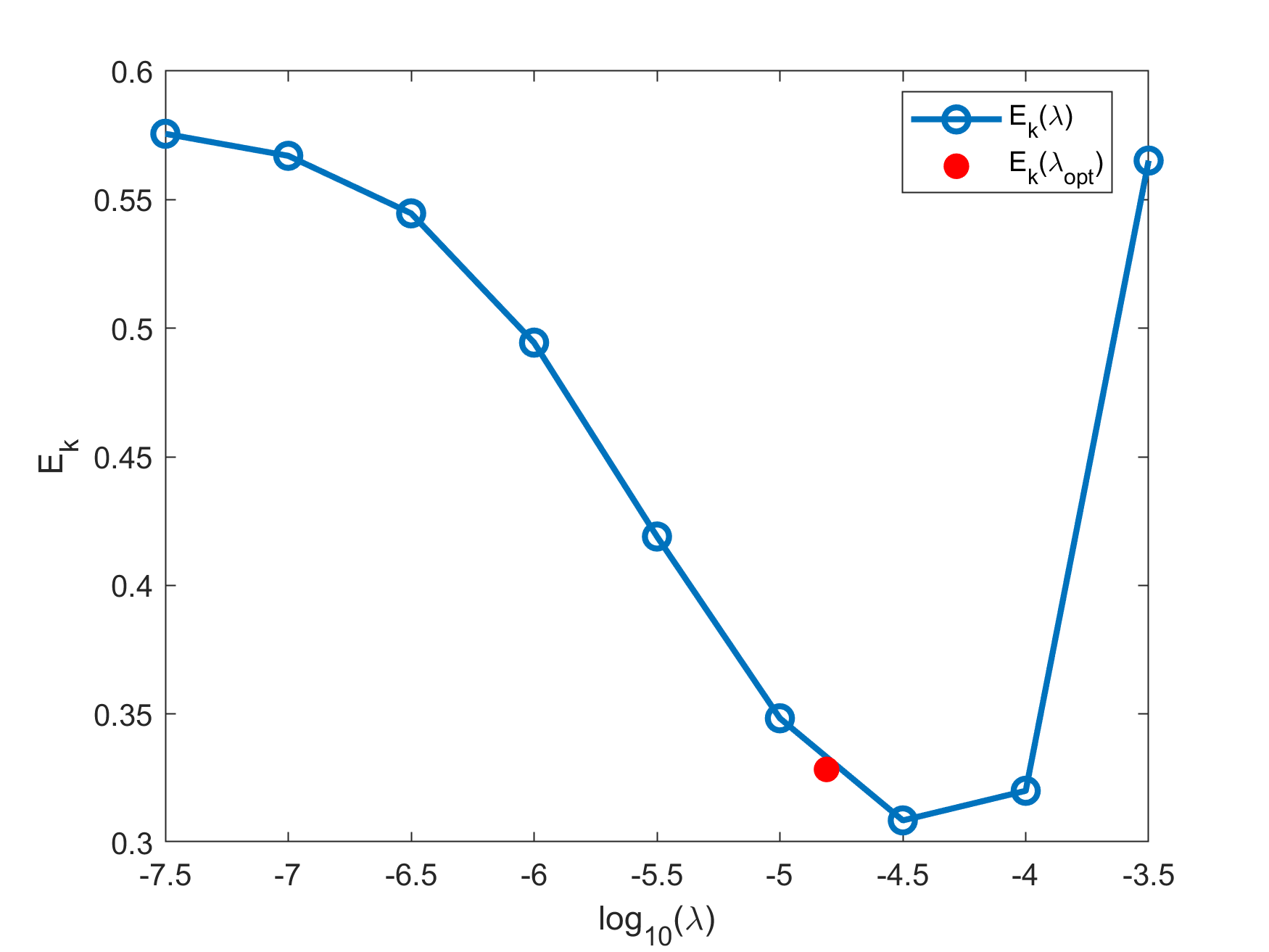}
        \caption{Fitting Error}
    \end{subfigure}
    \caption{The fitting surface given by Regularized RPIA for Example \ref{example 7.3}}
    \label{Fig:7}
\end{figure}

\subsection{Self-consistent iterative algorithms for curves and surfaces}
In order to further solve the problem of unknown information in practical applications, we propose a self-consistent iterative algorithm, the convergence threshold $\varepsilon_\lambda=0.01$, without prior information to find the regularization parameter in Section \ref{section6}. The following are some numerical examples. The settings of all parameters remain the same as Section \ref{section7.1} and Section \ref{section7.2}.

\begin{figure}[htbp]
    \centering
    \begin{subfigure}[b]{0.48\textwidth}
        \centering
        \includegraphics[width=\linewidth]{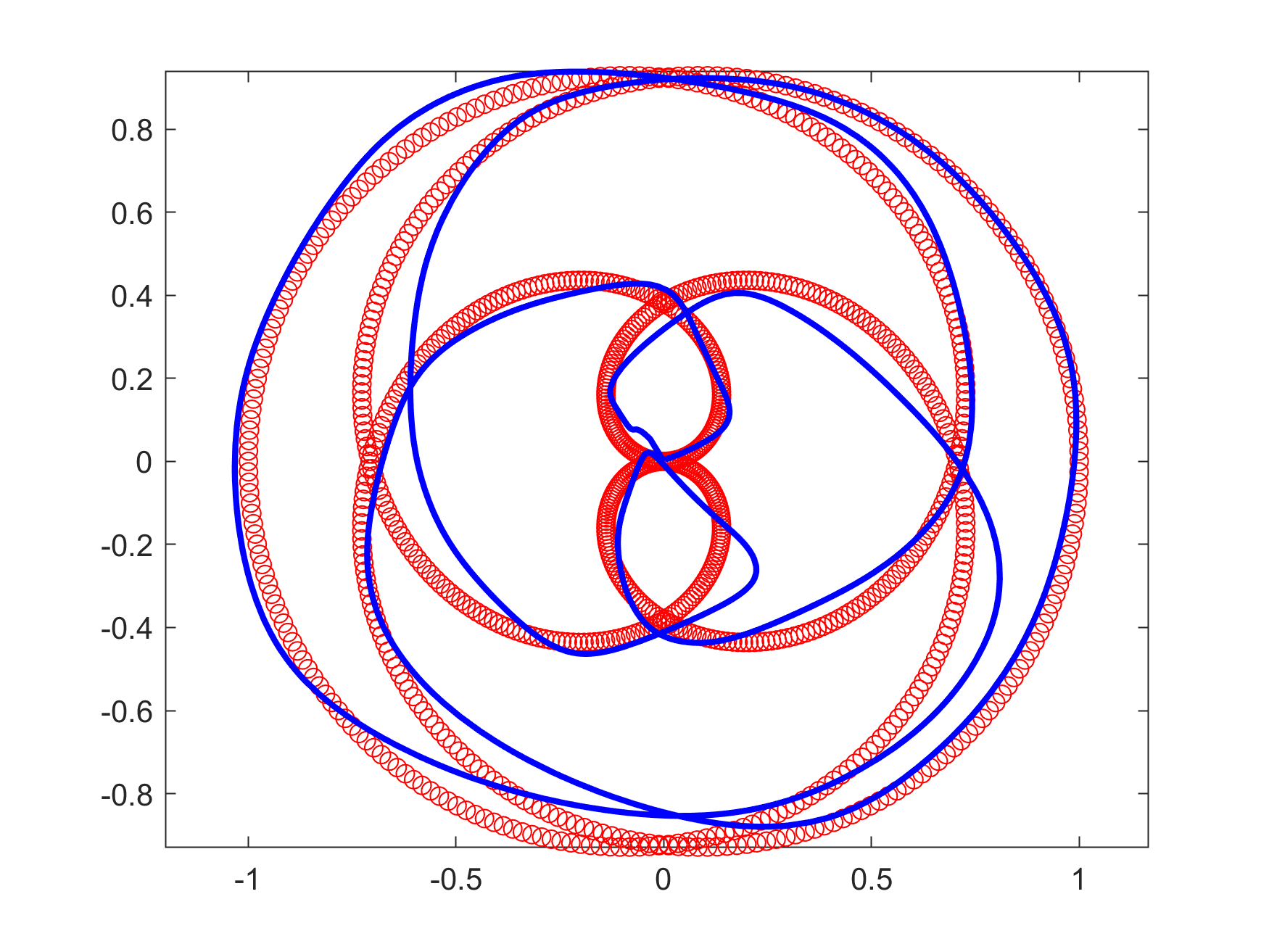}
        \caption{Fitted Curve for Example \ref{example 7.1}}
    \end{subfigure}
    \hfill
    \begin{subfigure}[b]{0.48\textwidth}
        \centering
        \includegraphics[width=\linewidth]{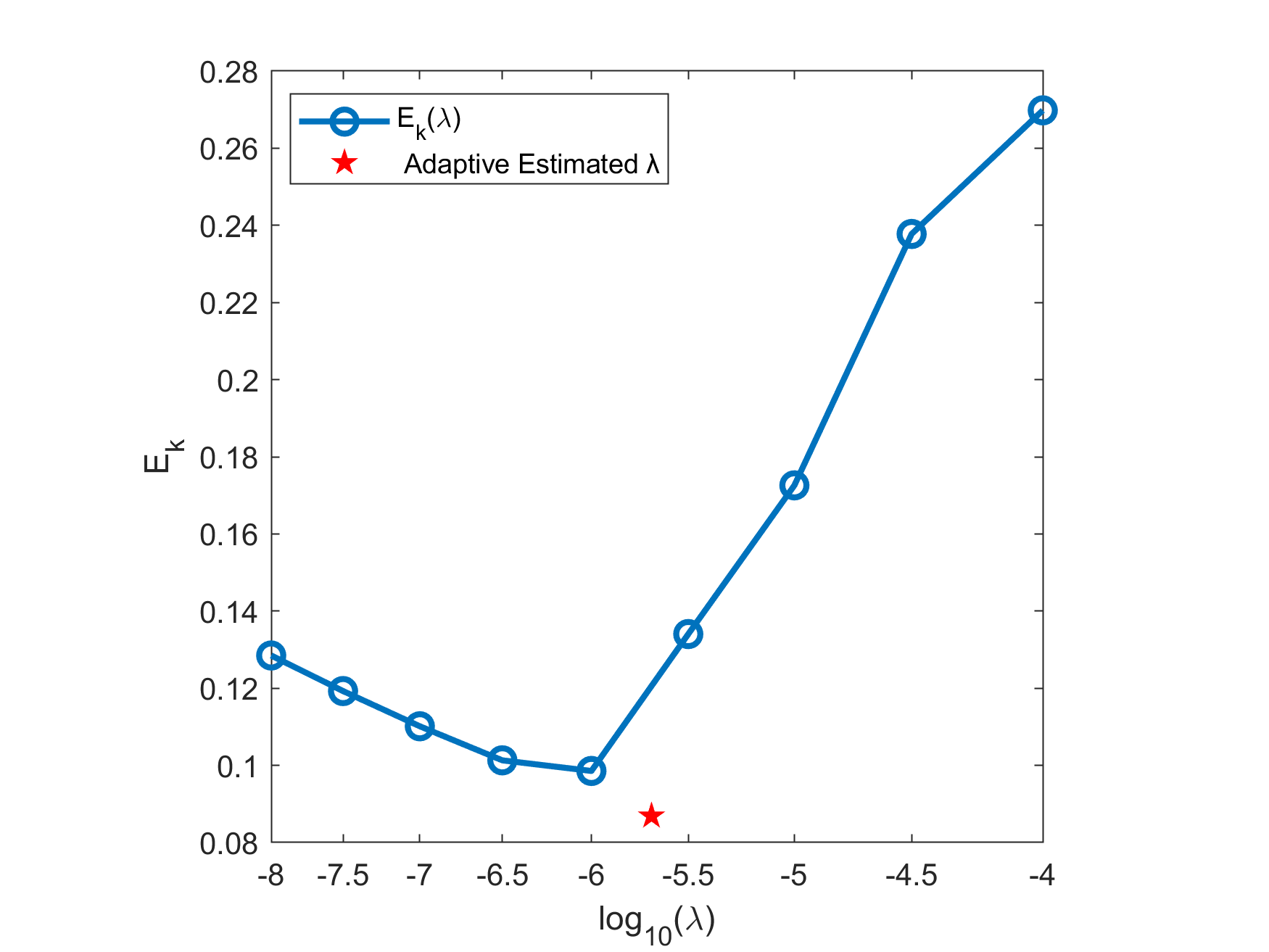}
        \caption{Fitting error}
    \end{subfigure}
    \caption{Adaptive algorithm results for Example \ref{example 7.1}}
    \label{fig: adapt1}
\end{figure}

\begin{figure}[htbp]
    \centering
    \begin{subfigure}[t]{0.50\textwidth}
        \centering
        \includegraphics[width=\linewidth]{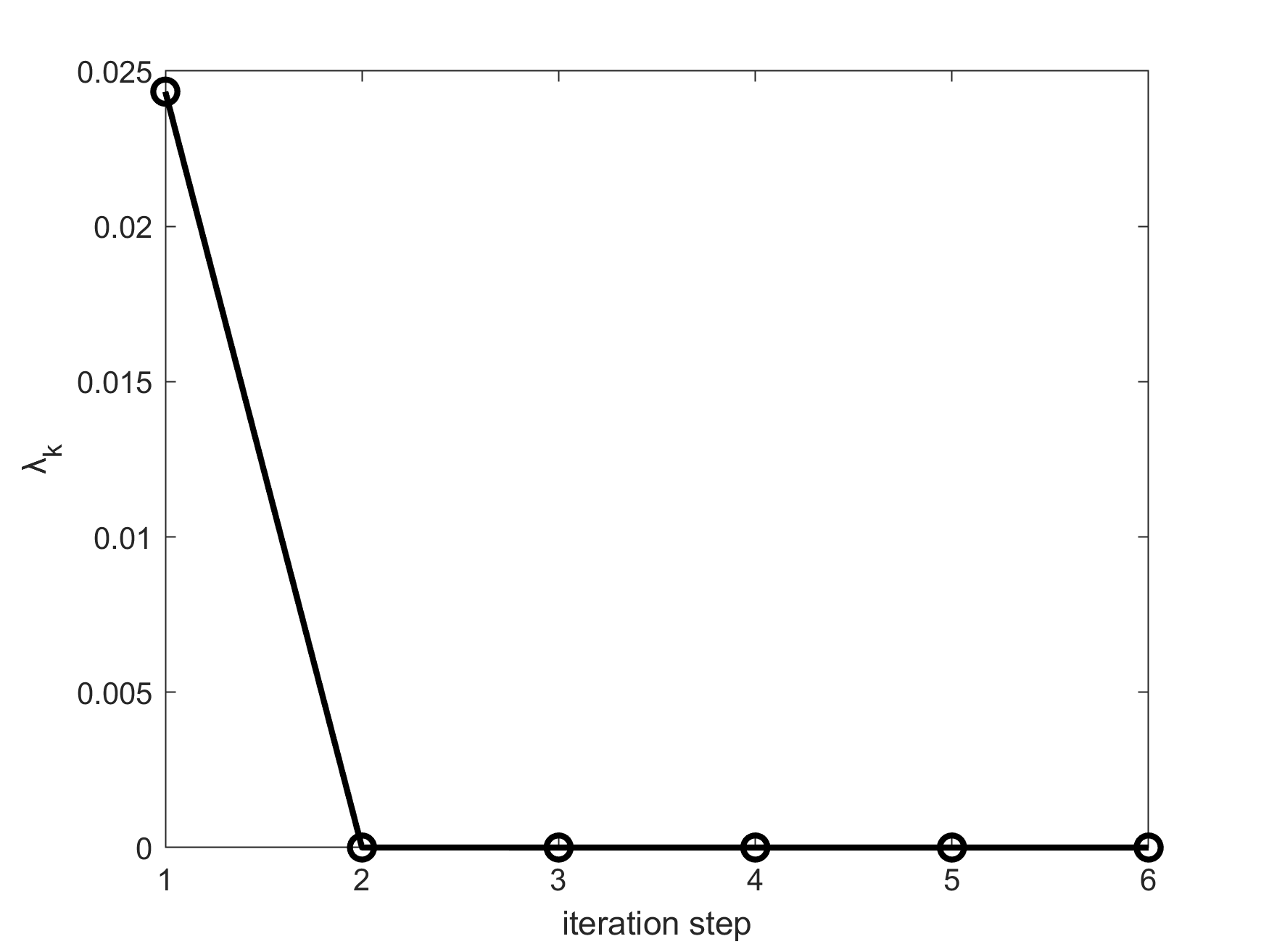}
    \end{subfigure}
    \caption{Iteration Step for Example \ref{example 7.1}}
    \label{Fig:9}
\end{figure}

It can be seen from Figure \ref{fig: adapt1} that the adaptive iterative algorithm works well for Example \ref{example 7.1}. Specifically, the regularization parameter $\lambda$ found by the adaptive algorithm is 2.067e-06, and the corresponding  fitting error is $E_k = 0.086871$(see Figure (a)), which is smaller than the error of the previous non-adaptive algorithm$(E_k = 0.097077)$. More importantly, it only requires a small number of iterations and is closer to the optimal regularization point(see Figure (b)) and Figure \ref{Fig:9}).

\begin{figure}[htbp]
    \centering
    \begin{subfigure}[b]{0.48\textwidth}
        \centering
        \includegraphics[width=\linewidth]{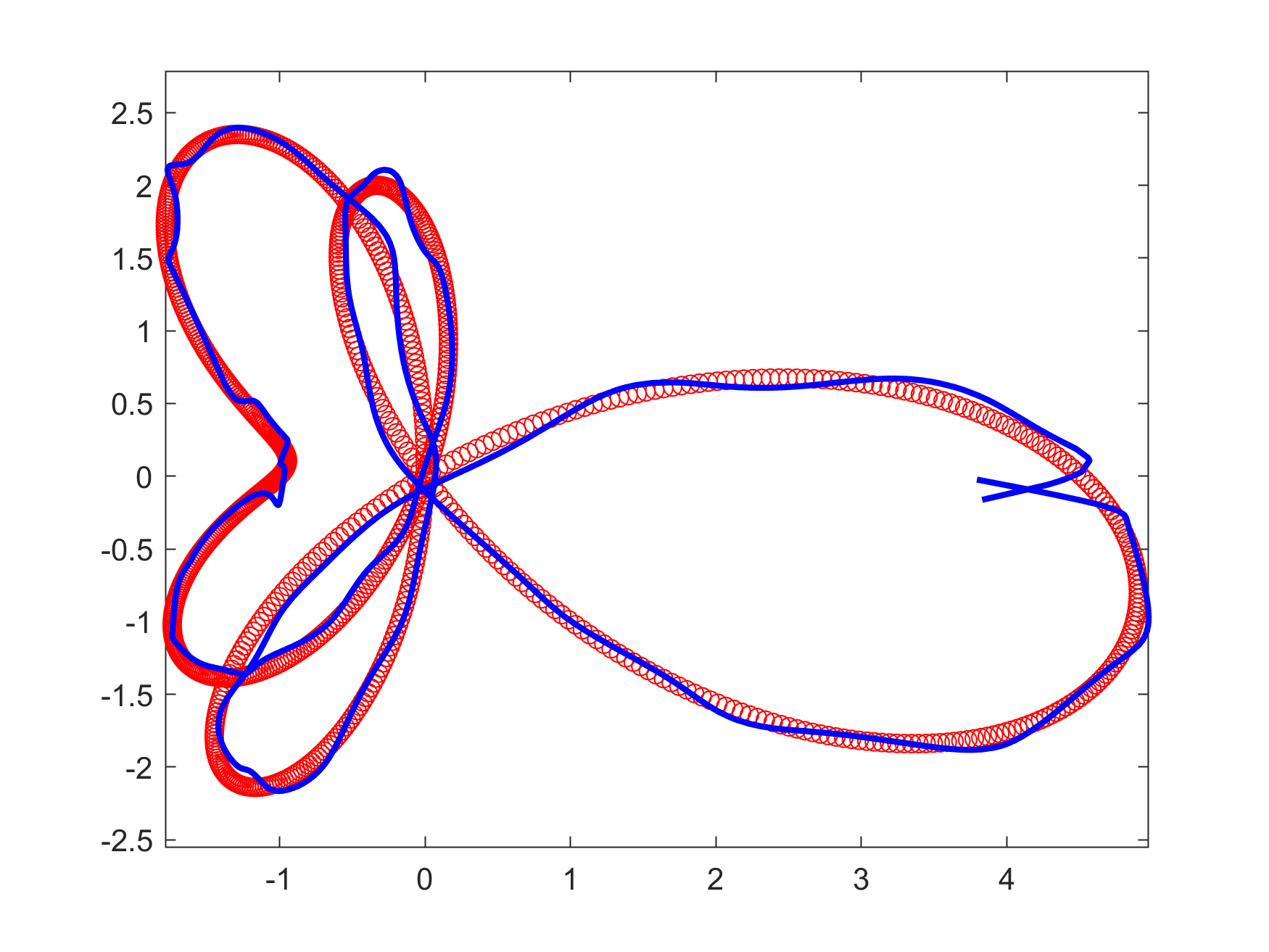}
        \caption{Fitted Curve for Example \ref{example 7.2}}
    \end{subfigure}
    \hfill
    \begin{subfigure}[b]{0.48\textwidth}
        \centering
        \includegraphics[width=\linewidth]{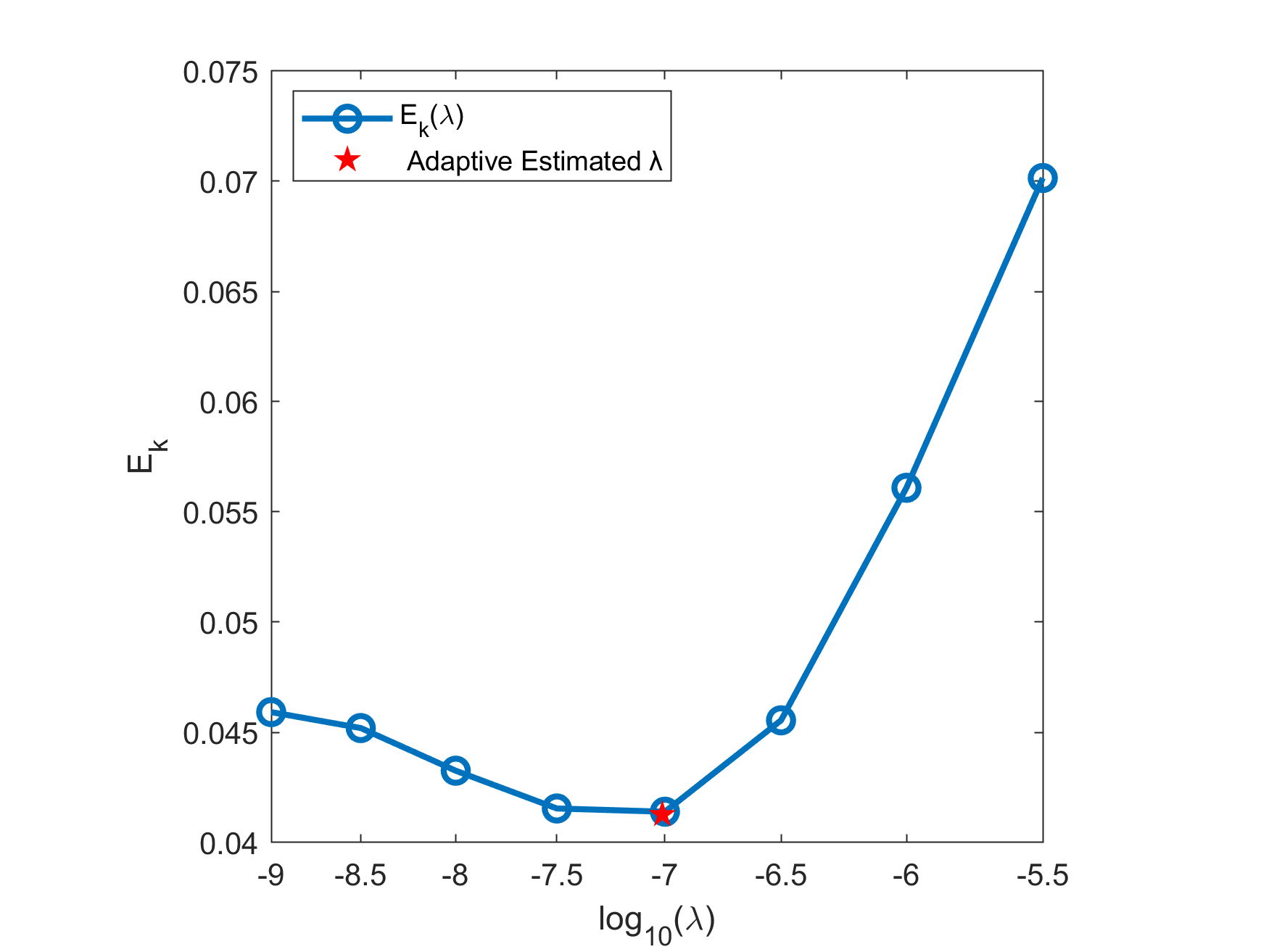}
        \caption{Fitting error}
    \end{subfigure}
    \caption{Adaptive algorithm results for Example \ref{example 7.2}}
    \label{fig:adapt2}
\end{figure}

\begin{figure}[htbp]
    \centering
    \begin{subfigure}[t]{0.50\textwidth}
        \centering
        \includegraphics[width=\linewidth]{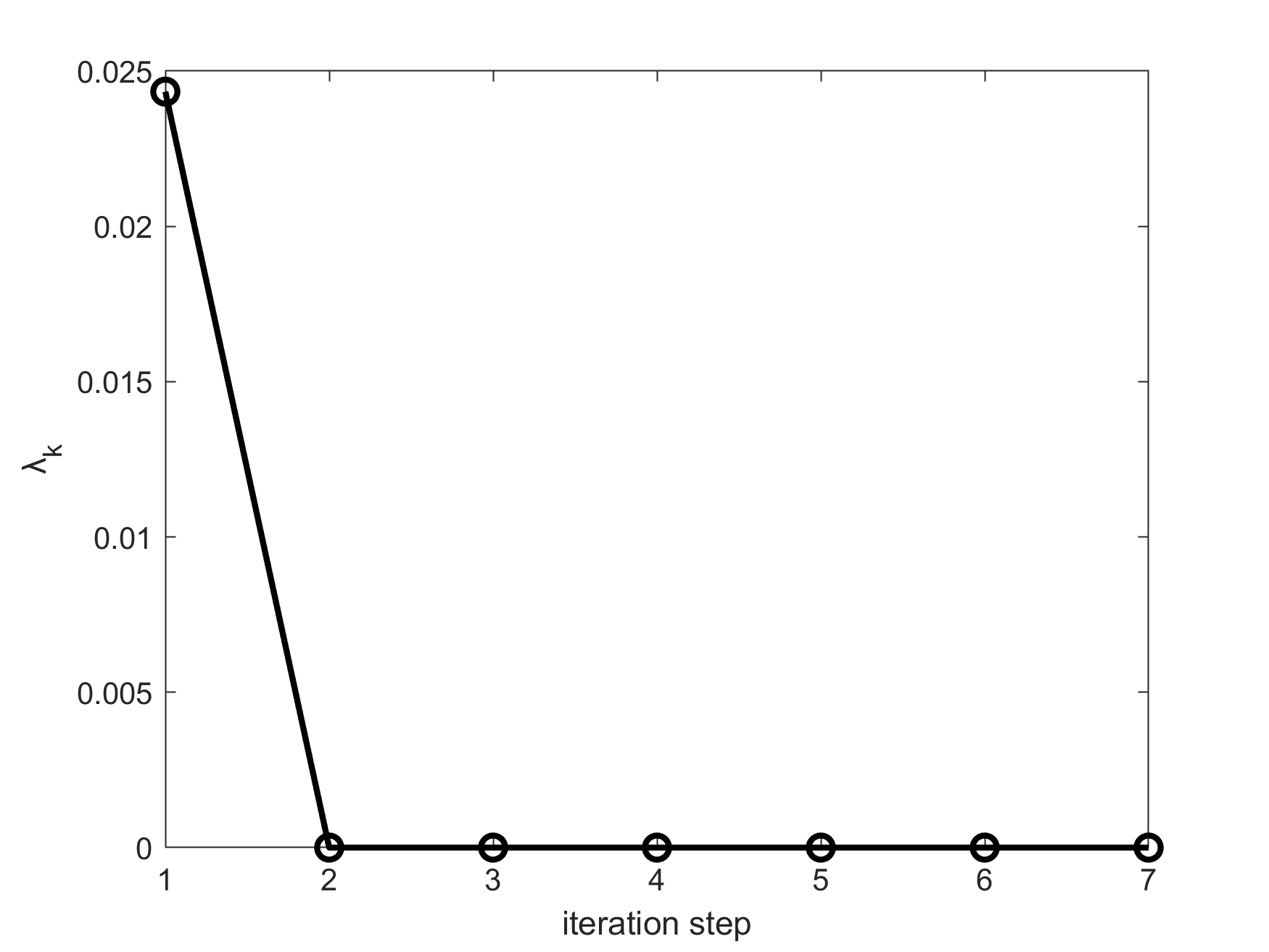}
    \end{subfigure}
    \caption{Iteration Step for Example \ref{example 7.2}}
    \label{Fig:10}
\end{figure}
For example \ref{example 7.2}, the optimal parameter $\lambda$ found by the algorithm 9.738e-08 and the fitting error is $E_k = 0.041325$. It only requires a small number of iterations and is closer to the optimal estimate(see Figure \ref{fig:adapt2} and Figure \ref{Fig:10}).

\begin{figure}[htbp]
    \centering
    \begin{subfigure}[b]{0.48\textwidth}
        \centering
        \includegraphics[width=\linewidth]{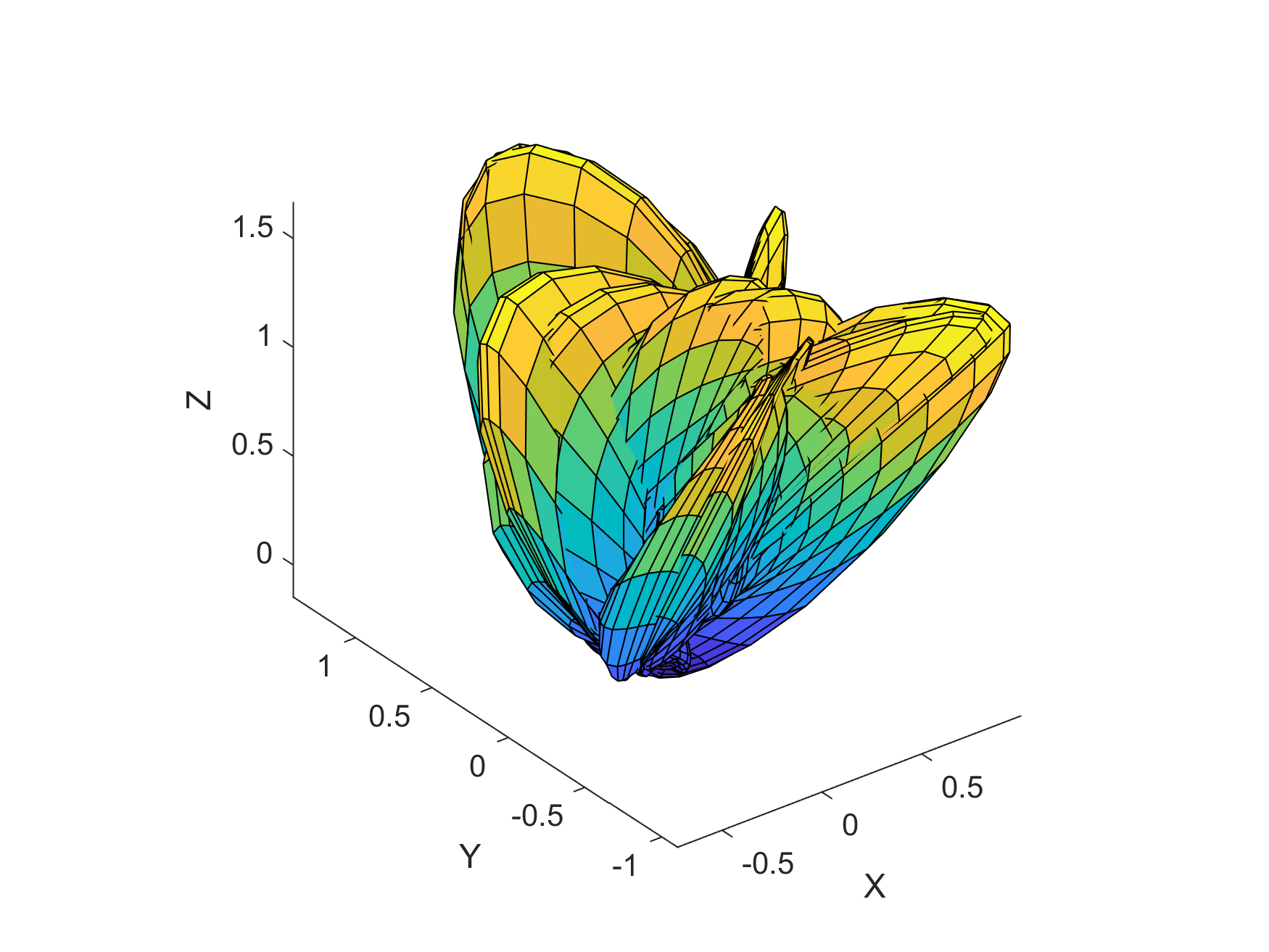}
        \caption{Fitted Surface for Example \ref{example 7.3}}
    \end{subfigure}
    \hfill
    \begin{subfigure}[b]{0.48\textwidth}
        \centering
        \includegraphics[width=\linewidth]{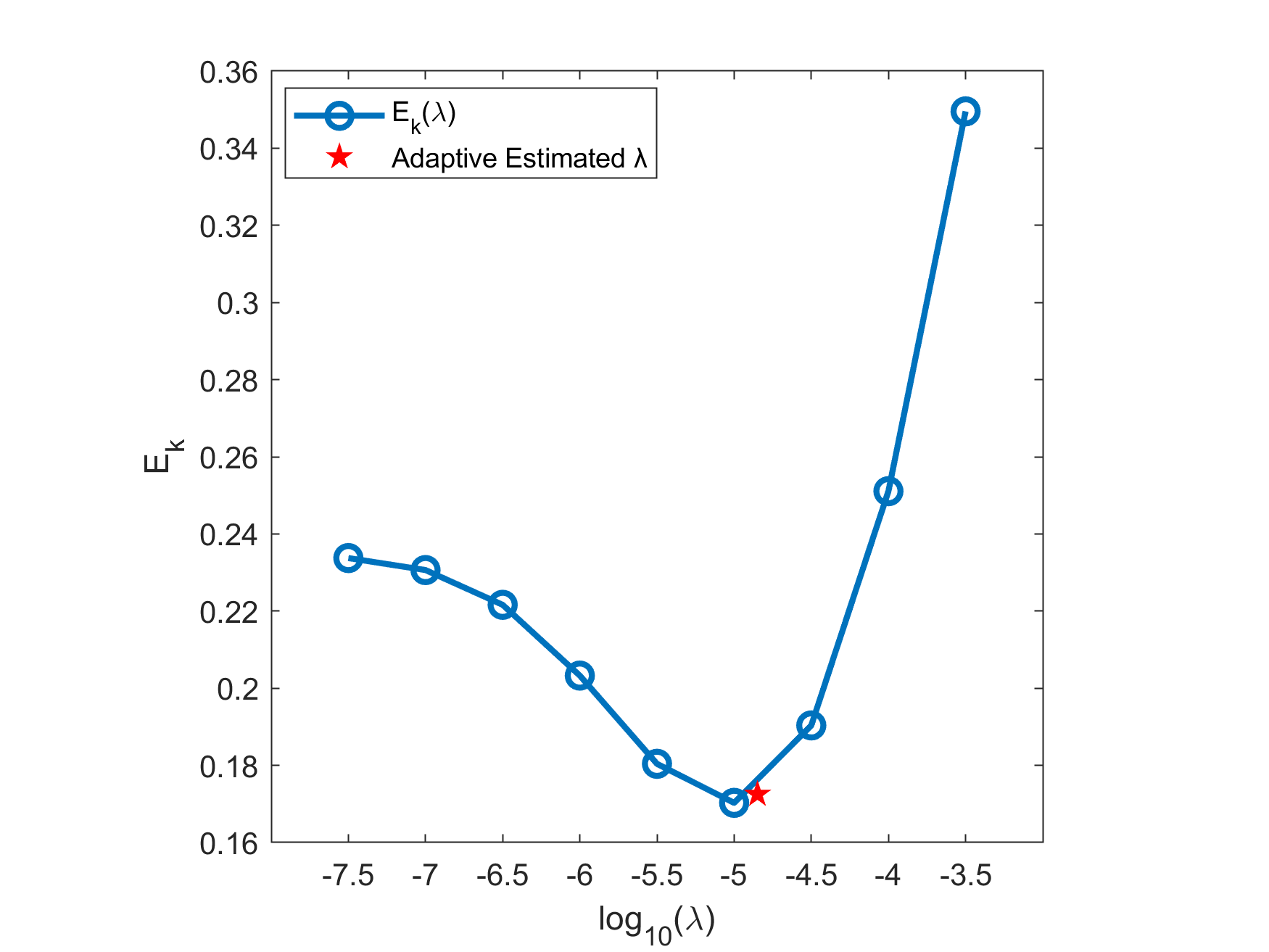}
        \caption{Fitting Error}
    \end{subfigure}
    \caption{Adaptive algorithm results for Example \ref{example 7.3} with noise level parameter $a=40$}
    \label{fig:adapt3}
\end{figure}
\begin{figure}[htbp]
    \centering
    \begin{subfigure}[t]{0.50\textwidth}
        \centering
        \includegraphics[width=\linewidth]{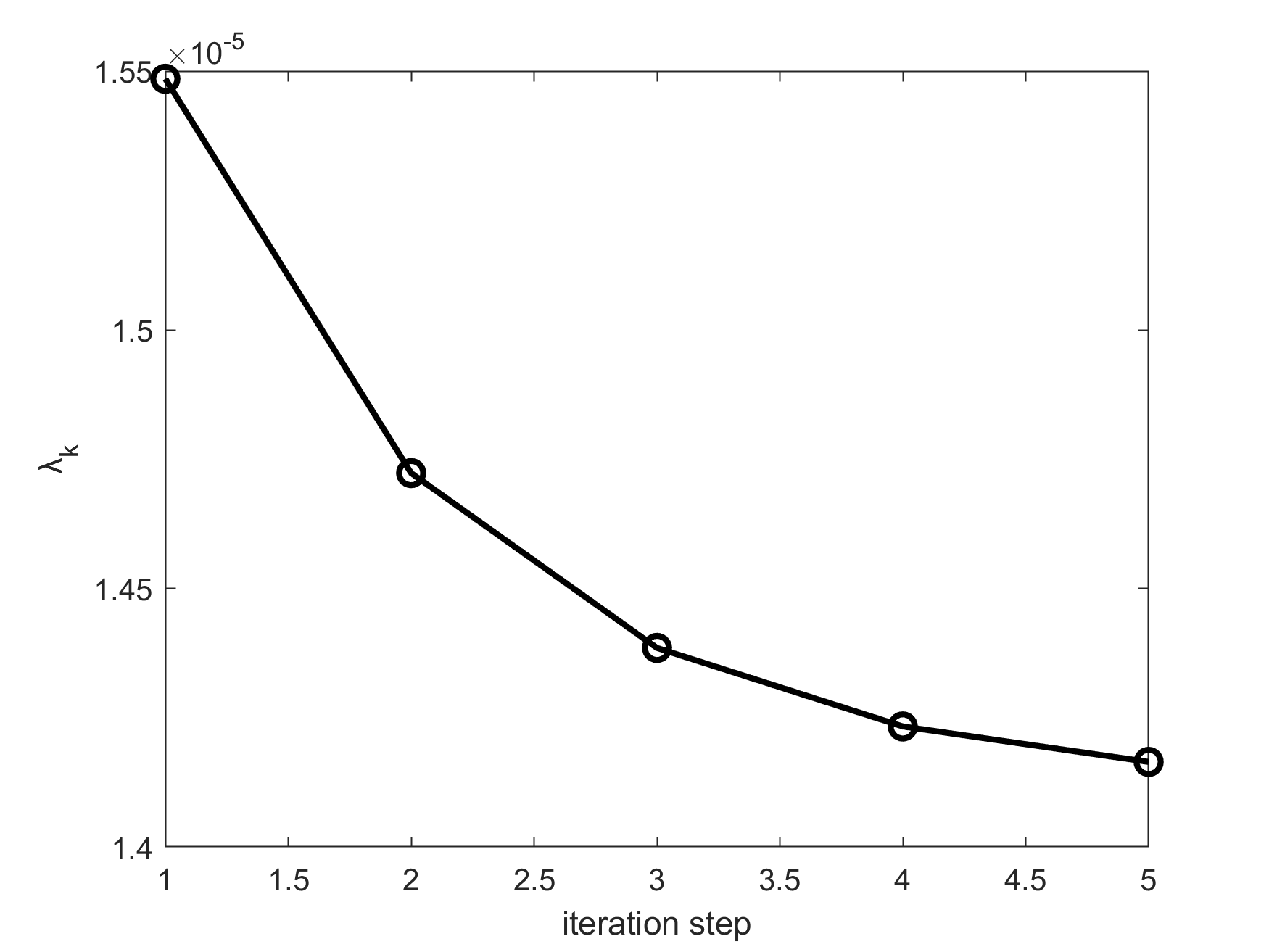}
    \end{subfigure}
    \caption{Iteration Step for Example \ref{example 7.3} with noise level parameter $a= 40$}
    \label{Fig:11}
\end{figure}
\begin{figure}[htbp]
    \centering
    \begin{subfigure}[b]{0.48\textwidth}
        \centering
        \includegraphics[width=\linewidth]{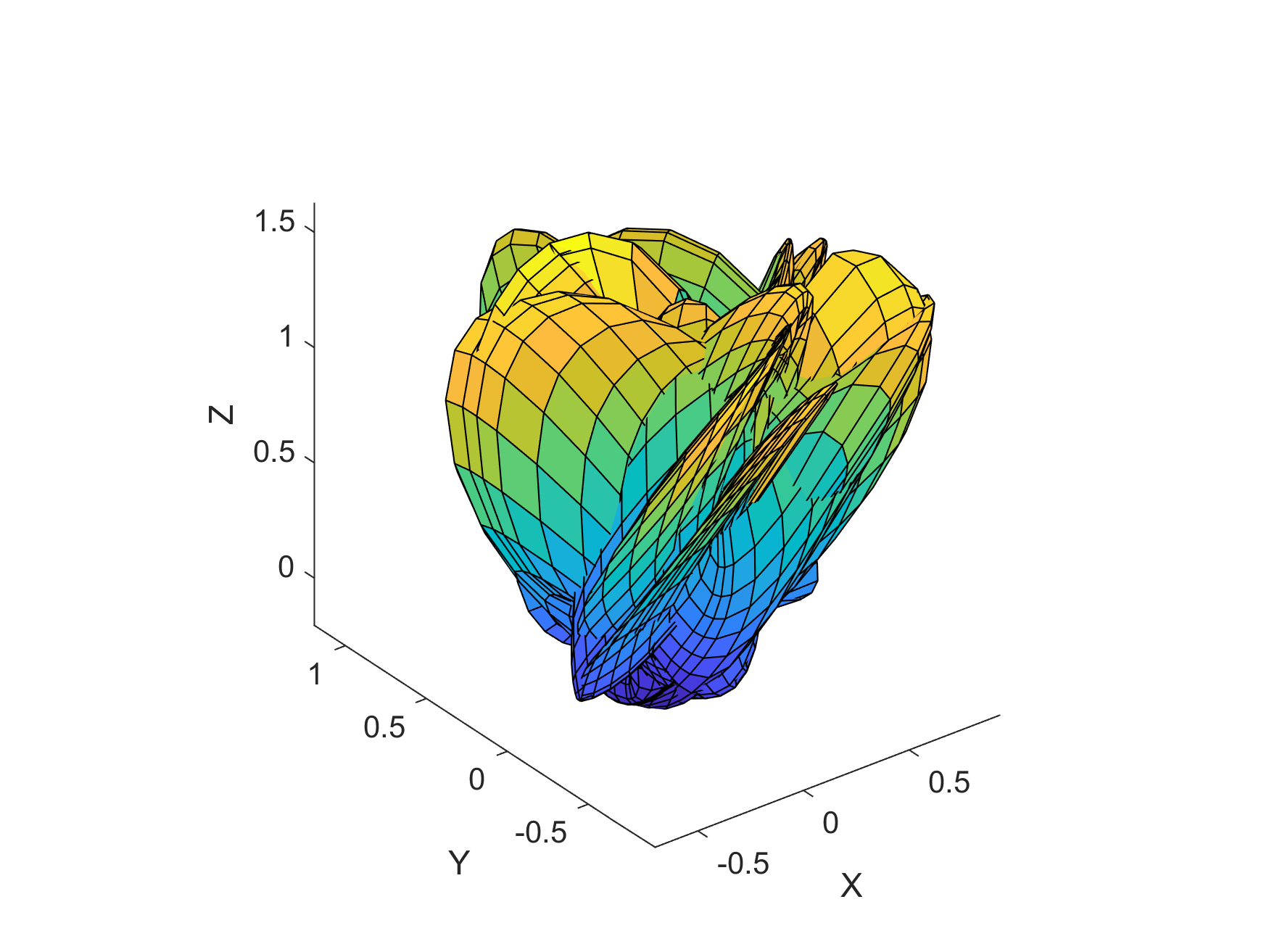}
        \caption{Fitted Surface for Example \ref{example 7.3}}
    \end{subfigure}
    \hfill
    \begin{subfigure}[b]{0.48\textwidth}
        \centering
        \includegraphics[width=\linewidth]{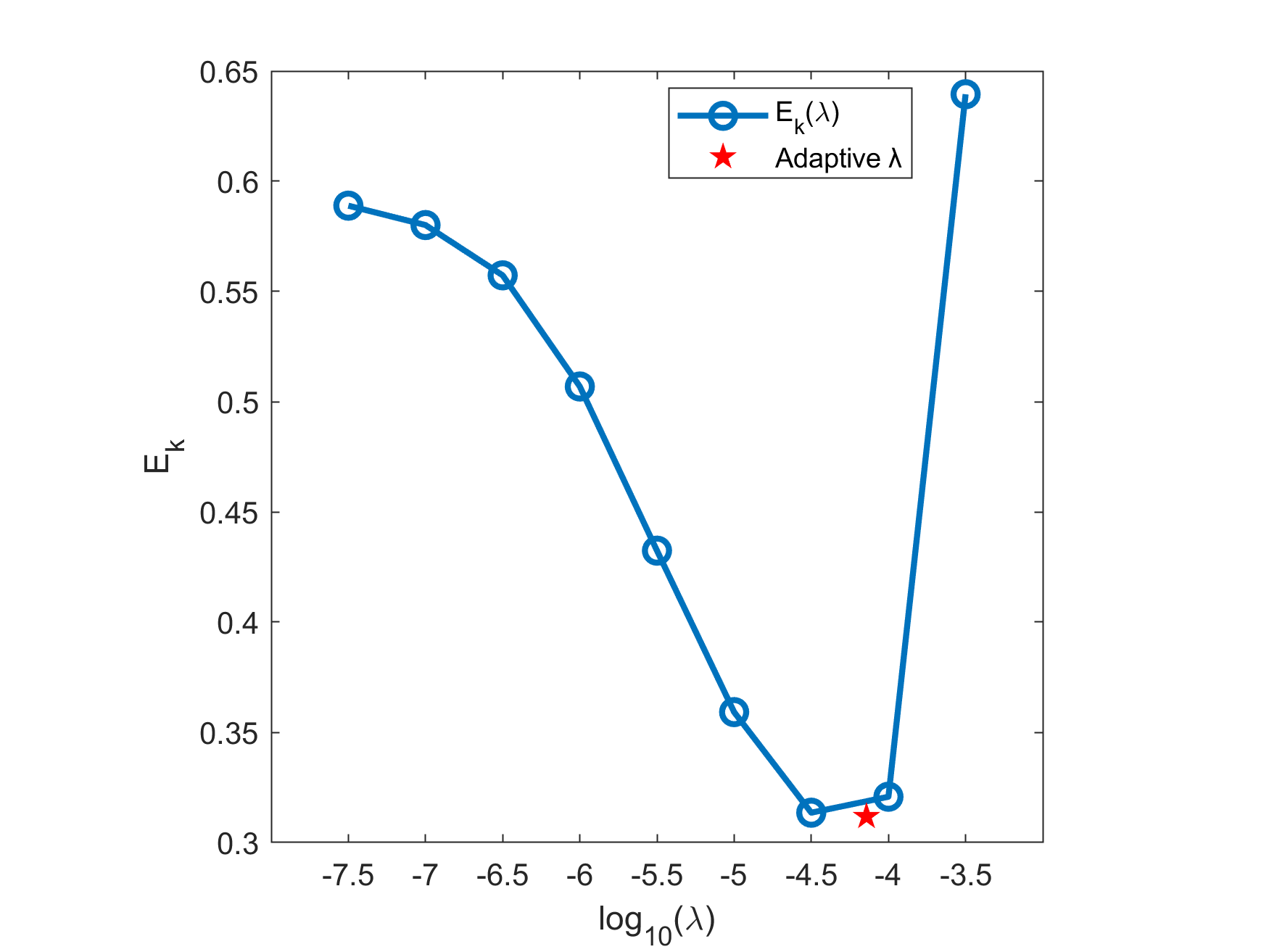}
        \caption{Fitting Error}
    \end{subfigure}
    \caption{Adaptive algorithm results for Example \ref{example 7.3} with noise level parameter $a=100$}
    \label{fig:adapt4}
\end{figure}
\begin{figure}[htbp]
    \centering
    \begin{subfigure}[t]{0.50\textwidth}
        \centering
        \includegraphics[width=\linewidth]{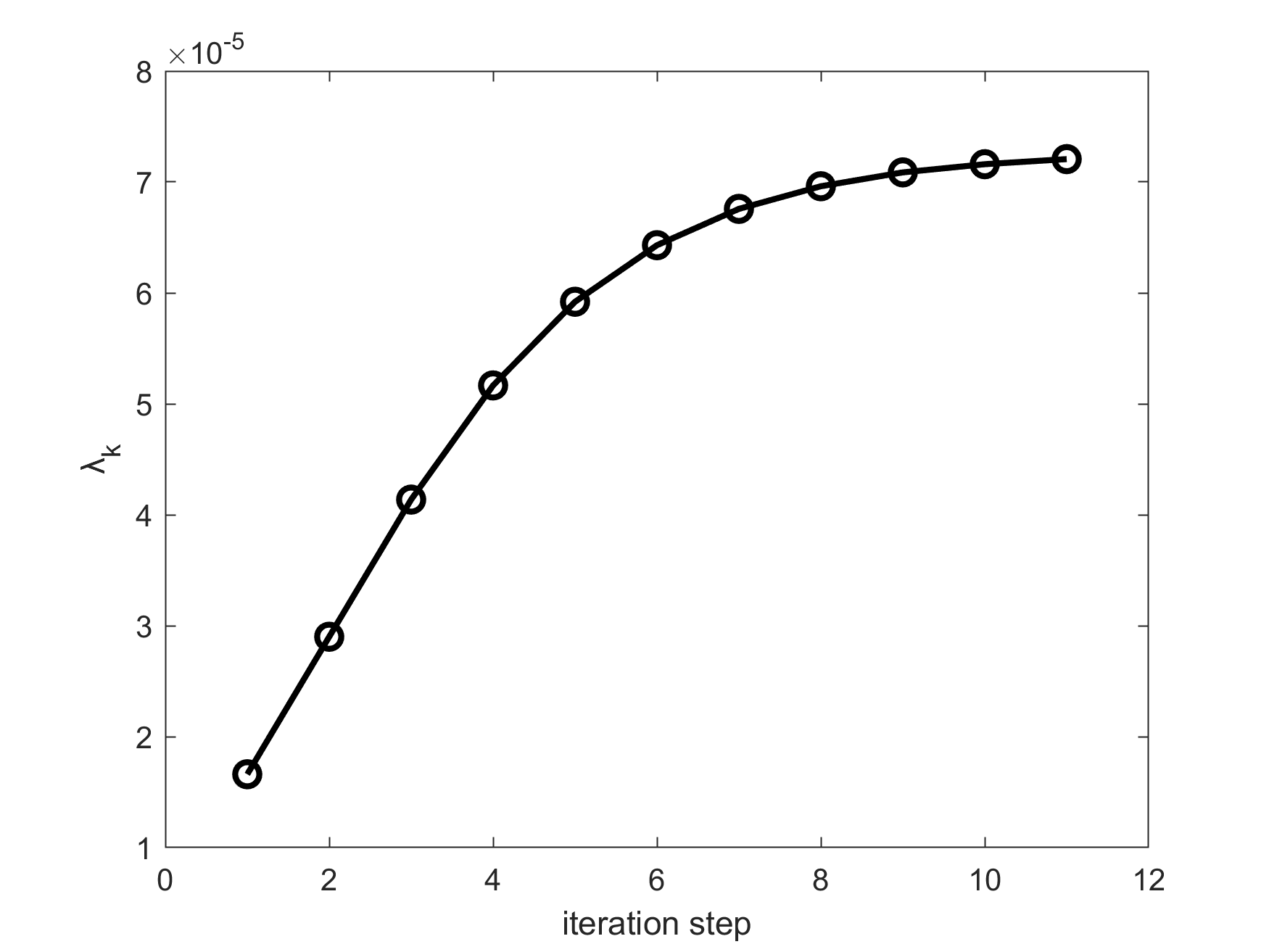}
    \end{subfigure}
    \caption{Iteration Step for Example \ref{example 7.3} with noise level parameter $a=100$}
    \label{Fig:12}
\end{figure}

For Example \ref{example 7.3},  when the noise level parameter $a$ is 40, the regularization parameter $\lambda$ found by the adaptive algorithm is 1.417e-05, and the corresponding fitting error $E_k= 0.172502$. It is better than the result of the non-adaptive algorithm and closer to the optimal regularization point(see Figure \ref{fig:adapt3}). From Figure \ref{Fig:11}, we can know that it also requires only a small number of iterations.
When the noise level parameter $a$ is 100, the regularization parameter $\lambda$ found by the adaptive algorithm is 7.206e-05, and the corresponding fitting error $E_k=0.311808$, which is better than the result of the non-adaptive algorithm. It only requires a small number of iterations and is closer to the optimal regularization point (see Figures \ref{fig:adapt4} and \ref{Fig:12}).

The above curve results are obtained by running the regularized RPIA 10 times in a row and taking the arithmetic mean of the results. The surface results are obtained by running 3 times.

\section{Concluding remarks}
In this paper, we propose a randomized progressive iterative regularization method based on the RPIA method for data fitting problems. By introducing a regularization term, we effectively improve the robustness and stability of the algorithm in the presence of noisy data. Theoretical analysis demonstrates that under appropriate conditions, the proposed method converges in expectation to the least-squares solution. Moreover, we provide a stochastic optimal estimation method for selecting the regularization parameter, which inspire self-consistent iterative algorithms without prior information. The numerical experiments presented in this study validate our theoretical analysis and clearly illustrate the advantages of the proposed algorithm in both the curve and surface fitting scenarios. To the best of our knowledge, convergence analysis of the RPIA method under noisy data conditions has not been addressed in the literature. More importantly, we provide a stochastic optimal estimation method for determining the regularization parameter, which is of significant practical value.

\section*{Declarations}
On behalf of all authors, the corresponding author states that there is no conflict of interest. No datasets were generated or analyzed during the current study.

\end{document}